\def\B{\mathcal{B}}
\def\F{\mathcal{F}}
\def\MM{\mathcal{M}}
\def\N{\mathbb{N}}
\def\P{\mathbb{P}}
\def\E{\mathbb{E}}
\def\EE{\mathcal{E}}
\def\R{\mathbb{R}}
\def\t{\textrm}
\def\w{\widetilde}
\def\M{\mathcal{M}}
\def\ZZ{\mathbb{Z}}
\def\T{\mathbb{T}}
\def\XX{\mathcal{X}}
\def\ind{{\mathchoice {\rm 1\mskip-4mu l} {\rm 1\mskip-4mu l}
{\rm 1\mskip-4.5mu l} {\rm 1\mskip-5mu l}}}
\newcommand {\G}{\mathbb{G}}
\newcommand{\be} {\begin{equation}}
\newcommand{\ee} {\end{equation}}
\newcommand{\bea} {\begin{eqnarray}}
\newcommand{\eea} {\end{eqnarray}}
\newcommand{\Bea} {\begin{eqnarray*}}
\newcommand{\Eea} {\end{eqnarray*}}
\newtheorem{Thm}{Theorem}
\newtheorem*{Thm*}{Theorem}
\newtheorem{Lem}{Lemma}
\newtheorem{Cor}{Corollary}
\newtheorem{Def}{Definition}
\theoremstyle{definition} \newtheorem{Ass}{Assumption}
\theoremstyle{definition} \newtheorem*{AssA}{Assumption A}
\theoremstyle{definition} \newtheorem*{key}{Key words}
\theoremstyle{definition} \newtheorem*{ams}{A.M.S. Classification}
\theoremstyle{remark}
\theoremstyle{remark}
\theoremstyle{remark}
\begin{document}
\title{Ancestral lineages and limit theorems    \\ for branching Markov chains}
\author{Vincent Bansaye \thanks{CMAP, Ecole Polytechnique, CNRS, route de
    Saclay, 91128 Palaiseau Cedex-France; E-mail: \texttt{vincent.bansaye@polytechnique.edu}}}
\maketitle \vspace{3cm}
\begin{abstract}
We consider a multitype branching model in discrete time,  where the type of each  individual is a trait, which belongs to some general state space. 
Both the reproduction  law and the trait inherited by the offsprings  may depend on the trait of the mother and the environment.
We study the long time behavior of the population and the ancestral lineage of typical individuals under general assumptions. 
We focus on  the  growth rate, the trait distribution among the population for large time, so as  local densities and the position of extremal individuals.  
A key role is played  by well chosen (possibly non-homogeneous) Markov chains. It relies in particular on an extension of many-to-one formula    \cite{guyon, BDMT} and spine decomposition  in the vein of \cite{LPP, KLPP, GeBa}. The applications use properties of the underlying  genealogy  and sufficient conditions for the ergodic convergence of Markov chains.
\end{abstract}
\begin{key} Branching processes, Markov chains, Varying environment, Genealogies.
\end{key}
\begin{ams}60J80, 60J05, 60F05, 60F10
\end{ams}


\section{Introduction}
We are interested in a branching Markov chain, which means a multitype branching process whose number of types may be infinite. The environment may evolve (randomly)
but  when the environment,  each individual evolves independently and the (quenched branching property hold).  \\

Let $(E,T)$ be a pair consisting of a set $E$ of environments 
and an invertible map $T$ on $E$. One can keep in mind the case when the  environment is ${\bf e}=(e_i:  i \in \ZZ)$ and $T{\bf e}=(e_{i+1} : i \in \ZZ)$. \\
Let $(\XX, \B_{\XX})$ be a measurable space which gives the state space of the branching Markov chain. The example $\XX \subset\R^d$ will be relevant for the applications.\\
For each $k\in \N$ and ${\bf e}\in E$, let $P^{(k)}(.,{\bf e},.)$ be a function from $\XX \times \B_{\XX^k}$ into $[0,1]$
which satisfies \\
a) For each $x\in \XX$, $P^{(k)}(x, {\bf e}, .)$ is  a probability measure on $(\XX, \B_{\XX^k})$. \\
b) For each $A\in \B_{\XX^k}$, $P^{(k)}(.,{\bf e},A)$ is a  $\B_{\XX}$ measurable function. \\ \\
In the whole paper, we use the classical notation $u=u_1 u_2...u_n$ with $u_i\in \N^*$ to identify each individual in the population. We denote by $\vert u \vert =n$ the generation of the individual $u$,
by  $N(u)$ the number of offsprings of the individual $u$ and by $X(u) \in  \XX$ the trait (or position) of the individual  $u$. \\ 

For any generation, each individual with trait  $x \in \XX$ which lives in environment ${\bf e} \in E$ gives birth independently to a random number of offsprings, whose 
law both depend on $x$ and ${\bf e}$.  This number of offsprings is distributed as a r.v. $N(x,{ \bf e})$ whose mean is denoted by
$$m(x,{\bf e})=\E(N(x,{\bf e})). $$
In the whole paper, we assume that $m(x, {\bf e})>0$ for each $x\in \XX, {\bf e} \in E$ for convenience. A natural framework for our models will be given  ${\bf e}=(e_i:  i \in \ZZ)$ and $N(x,{\bf e})$ depends only on $x$ and $e_0$, so that
$e_i$ yields the environment in generation $i$   and the reproduction law in generation $i$ just depends on $e_i$. \\
If the environment is ${\bf e}$, we denote by $\P_{{\bf e}}$ the associated probability. 
The distribution of the traits of the offsprings of 
the individual $u$ living in generation $n$ ($\vert u\vert=n)$ is given by  
\Bea
&&\P_{{\bf e}}(X_{u1} \in dx_1, \cdots, X_{uk} \in dx_k \ \vert \ (X(v) :  \vert v\vert \leq n), N(u)=k) \\
&&\qquad \qquad \qquad \qquad \qquad \qquad \qquad \qquad \qquad \qquad=P^{(k)}(X(u),T^n{\bf e},dx_1\cdots, dx_k).
\Eea
In other words, one individual with trait   $x$ living in environment ${\bf e}$ gives birth to a set of individuals $(X_1, \cdots, X_{N(x,{\bf e})})$ whose trait are  specified by 
$(P^{(k)}({\bf e},x,.) : k \in \N , {\bf e}\in  E)$.  \\
This process is  a multitype branching process in varying environment where the types take value in $\XX$. They have been largely studied 
 for finite number of types, whereas much less is known or understood in the infinite case, but some results due to  Seneta, Vere Jones, Moy, Kesten for countable types. \\
The case of branching random walk has  attracted lots of attention from the pioneering works of Biggins. Then  $\XX=\R^d$ and the transitions $P^{(k)}$ are invariant by translation, i.e.
$P^{(k)}(x,{\bf e},x+dx_1\cdots, x+dx_k)$ does not depend on  $x \in \XX$.
Recently, fine results have been obtained about the extremal individuals  and their genealogy for such models, see e.g. \cite{HS, AS}.
Such questions have also been investigated for branching random walk in random environment. In particular the recurrence property  \cite{mull, CoPo}, the survival and the growth rate  \cite{gant, CP, CY},  central limit theorems  \cite{yosh, MN} and large deviations results  \cite{huang2} have been obtained. \\
As far as I see, the methods used for such models and in particular the spectral methods and the martingale arguments are not easily adaptable to the general case we consider. 
We are  motivated by applications to models for biology and ecology such as cell division models for cellular aging \cite{guyon} or parasite infection \cite{ban}  and reproduction-dispersion models in non-homogeneous environment \cite{BL}. Thus, we are here (also) inspired by the utilization of auxiliary Markov chains, branching decomposition and $L^2$ computations, in 
the vein of the works of Athreya and Khang \cite{AKh,AKh2} and Guyon \cite{guyon}. The applications and references will be given along the paper. \\

We are interested here in the evolution of the measure associated to the traits of the individuals:
$$Z_n := \sum_{\vert u\vert=n} \delta_{X(u)}$$
and more specifically by $Z_n(A_n)=\#\{ u  :  \vert u \vert =n, \ X(u)\in A_n\}$. We also define
$$ Z_n(f)=\sum_{\vert u\vert=n} f(X(u)), \quad f_n.Z_n = \sum_{\vert u\vert=n} \delta_{f_n(X(u))}.$$
First, we want to know if the process may survive globally and how it would grow.  Thus, Section 2 yields an expression of the mean growth rate 
of the population relying of the dynamic of the trait and the offspring laws,  in the same vein as  \cite{BL} for metapopulations with a finite number of patches and fixed environment.  Then  (Section 3), we study  the repartition of the population and focus on the asymptotic behavior of the proportions of individuals whose trait belongs to $A$, i.e. $Z_n(\XX)/Z_n(A)$. It is inspired by  
\cite{AKh, guyon, BanHua} and extends the law of large numbers to both varying environment and trait dependent reproduction. 
We add that we take into account some possible renormalization of the traits via a function $f_n$ to cover non recurrent positive cases.  
Finally, in Section 4,  we provide some asymptotic  results about $Z_n(A_n)$,  outside the range of law of large numbers. It relies on the large deviations of the auxiliary process and the trajectory associated with. As an application
we can derive the position of the extremal particles in some monotone models motivated by biology, where new behaviors appear.\\
Let us also mention that the probabilistic approach we follow suggests a way to simulate the long time distribution of the population and will be applied to some biological models motivated by cell division or reproduction-dispersion dynamics.

We end up the introduction with recalling some  classical notations.    If $u=u_1\cdots u_n$ and $v=v_1\cdots v_m$, then $uv=u_1\cdots u_nv_1\cdots v_m$. 
For two different individuals $u, v$ of a tree, write $u<v$ if $u$ is an ancestor of $v$, and denote by $u\wedge v$ the nearest common ancestor of $u$ and $v$ in the means that $|w|\leq |u\wedge v|$ if $w< u$ and $w< v$.

\section{Growth rate of the population}

We  denote by $\rho_{{\bf e}}=\lim_{n\rightarrow \infty} n^{-1}\log \E_{{\bf e}}(Z_n(\XX))$ the  growth rate of the population in the environment ${\bf e}$, when it exists.  \\
We are giving an expression of this growth rate in terms of a Markov chain associated with a random lineage. 
Its transition kernel is defined by
$$P(x,{\bf e},dy):=\frac{1}{m(x,{\bf e})} \sum_{k\geq 1} \P(N(x,{ \bf e})=k) \sum_{i=1}^{k} P^{(k)}(x, {\bf e}, \XX^{i-1}dy\XX^{k-i})$$
so that the  auxiliary Markov chain $X$ is given  by
$$\P_{{\bf e}}(X_{n+1} \in dy \ \vert X_n=x)=P(x,T^n {\bf e},dy).$$
It means that we follow a linage by choosing uniformly at random one of the offsprings at each generation, biased by the number of children.


We assume now that $\XX$ is a locally compact polish space endowed with a complete metric and its Borel $\sigma$ field. Moreover $E$ is a Polish Space and $T$ is an homomorphism. In the rest of the paper, we endow
$\mathcal{M}_1(\XX\times E)$ with the weak topology, where $\mathcal{M}_1(\XX\times E)$ is the space of probabilities on $\XX\times E$. It is the smallest topology such that $\mu \in \mathcal{M}_1(\XX\times E) \rightarrow \int_{\XX\times E} f(z)\mu(dz)$ is continuous as soon as $f$ is continuous and bounded.

\begin{Def}
We say that $X$ satisfies a Large Deviation Principle (LDP)  with good rate function $I_{\bf e}$ in environment ${\bf e}$ when there exists a lower semi continuous function
 $I : \XX\times E \rightarrow \R$ with compact level subsets\footnote{ It means that $\{ \mu \in \mathcal{M}_1(\XX \times E) : I(\mu) \leq l\}$ is compact for the weak topology }
 for the weak topology such that
$$L_n^{{\bf e}}=\frac{1}{n+1} \sum_{k=0}^{n} \delta_{X_k,T^k{\bf e}}$$
satisfies for every $x \in \XX$
$$\limsup_{n\rightarrow \infty} \frac{1}{n} \log \P_{{\bf e}, x}(L_n \in F) \leq -\inf_{z\in F} I_{\bf e}(z)$$
for every closed set $F$ of $\MM_1(\XX\times E)$, and
$$\liminf_{n\rightarrow \infty} \frac{1}{n} \log \P_{{\bf e}, x}(L_n \in O) \geq -\inf_{z\in O} I_{\bf e}(z)$$
for every open set $O$ of $\MM_1(\XX\times E)$.
\end{Def}
\noindent The existence of such a principle is classical for fixed environment $E=\{{\bf e}\}$, finite $\XX$,  under irreducibility assumption. We refer to Sanov's theorem, see e.g. chapter 6.2 in \cite{DZ}. 
We note that the principle can be  extended to periodic environments, taking care of the irreducibility.
Besides, we are using an analogous result for stationary random environment to get forthcoming Corollary \ref{applD}, under Doeblin type conditions, which is due to \cite{timo}.  \\ 

The first question that we tackle now is the mean growth rate of the population.
The branching property yields the linearity of  the operator $\mu \rightarrow m(\mu)=\E_ {{\bf e},\mu}(Z_1(.))$ for some measurable set $A$. \\
In the case of fixed environment, $P$ and $N$ do not depend on ${\bf e}$, so $m$ is also fixed and  the mean growth rate of the process $Z$ is the limit of $\log \parallel m^n \parallel/n$, with $\parallel . \parallel$ an operator norm. 
If $\XX$ is finite, it yields the Perron-Frobenius eigenvalue under strong irreducibility assumption, with a min max representation due to Collatz-Wielandt.  Krein-Rutman theorem gives 
an extension to infinite dimension space requiring compactness of the operator $m$ and strict positivity. \\ In the random environment case,
it corresponds to  the Lyapounov exponent  and quenched asymptotic results can be obtained in the case $\XX$ is finite \cite{Kesten}. Then, the process is a branching process in random environment and we refer to  \cite{AK2, Kaplan} for extinction criteria and \cite{Cohn, Tanny} for its growth rate. \\ 

To go beyond these assumptions and get an interpretation of the growth rate in terms of reproduction-dispersion dynamics, we 
provide here an other characterization. \\
This  is a  functional  large deviation principle  relying on  Varadhan's lemma. It allows to
decouple the reproduction and dispersion in the dynamic.  Thus,  it yields an extension of Theorem 5.3 in \cite{BL} both for varying environment and infinite state pace $\XX$. We refer to this latter article for motivations in ecology, more specifically for   metapopulations. The next Corollary then puts in light the dispersion strategy followed
by  typical individuals  of the population for  large times. 

\begin{Thm} \label{rho}
Assume that $X$ satisfies a LDP with good rate function $I_{{\bf e}}$ in environment ${\bf e}$ and $\log m : \XX \times E\rightarrow (-\infty ,\infty)$ is continuous and bounded. Then, for every $x\in \XX$,
$$\lim_{n\rightarrow \infty} \frac{1}{n}\log \E_{{\bf  e}, \delta_x}(Z_n(\XX)) = \sup_{\mu \in \MM_1(\XX\times E) } \left\{\int_{\XX\times E} \log(m(x,e))\mu(dxde) -I_{{\bf e}}(\mu)\right\}:=\varrho_{{\bf e}}$$
and
$$M_{{\bf e}}:=\bigg\{ \mu \in \MM_1(\XX\times E) :  \int \log( m(x,e))\mu(dxde) -I_{{\bf e}}(\mu)= \varrho_{{\bf e}} \bigg\}$$
is compact and non empty.
\end{Thm}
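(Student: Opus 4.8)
The plan is to represent the first moment $\E_{{\bf e},\delta_x}(Z_n(\XX))$ as an expectation over the auxiliary Markov chain $X$ via the many-to-one formula, and then recognize the resulting quantity as one to which Varadhan's lemma applies. First I would write, using the branching structure and the definition of the kernel $P$,
$$\E_{{\bf e},\delta_x}(Z_n(\XX)) = \E_{{\bf e},x}\!\left(\prod_{k=0}^{n-1} m(X_k,T^k{\bf e})\right) = \E_{{\bf e},x}\!\left(\exp\!\Big(\sum_{k=0}^{n-1}\log m(X_k,T^k{\bf e})\Big)\right).$$
This identity is the standard many-to-one formula referred to in the introduction (the reproduction biasing that defines $P$ is exactly what converts the tree sum into a weighted lineage expectation); I would prove it by induction on $n$, conditioning on the first generation. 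Rewriting the exponent in terms of the empirical occupation measure $L_n^{{\bf e}}=\frac1{n+1}\sum_{k=0}^n \delta_{X_k,T^k{\bf e}}$, we get $\sum_{k=0}^{n-1}\log m(X_k,T^k{\bf e}) = (n+1)\int \log m\, dL_n^{{\bf e}} - \log m(X_n,T^n{\bf e})$, and since $\log m$ is bounded the last term is negligible at the exponential scale, so
$$\frac1n\log\E_{{\bf e},\delta_x}(Z_n(\XX)) = \frac1n\log\E_{{\bf e},x}\!\left(\exp\!\Big((n+1)\textstyle\int \log m\, dL_n^{{\bf e}}\Big)\right) + o(1).$$

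Next I would invoke Varadhan's lemma (e.g. Theorem 4.3.1 in \cite{DZ}): since $L_n^{{\bf e}}$ satisfies an LDP with good rate function $I_{{\bf e}}$ on $\MM_1(\XX\times E)$, and since $\mu\mapsto \int\log m\,d\mu$ is a bounded continuous functional on $\MM_1(\XX\times E)$ for the weak topology (this is exactly where continuity and boundedness of $\log m$ enter, giving both the continuity needed for the functional and the moment/tightness condition that lets Varadhan apply without a tail assumption), we conclude
$$\lim_{n\to\infty}\frac1n\log\E_{{\bf e},\delta_x}(Z_n(\XX)) = \sup_{\mu\in\MM_1(\XX\times E)}\left\{\int\log m\, d\mu - I_{{\bf e}}(\mu)\right\} = \varrho_{{\bf e}},$$
uniformly in the starting point $x$ in the sense that the limit does not depend on $x$. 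This gives the first assertion.

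For the second assertion, set $\Phi(\mu):=\int\log m\,d\mu - I_{{\bf e}}(\mu)$. Non-emptiness and compactness of $M_{{\bf e}}=\{\mu:\Phi(\mu)=\varrho_{{\bf e}}\}$ follow from a standard compactness argument: $I_{{\bf e}}$ has compact level sets and $\int\log m\,d\mu$ is bounded, so $\Phi$ is upper semicontinuous with $\{\Phi\geq \varrho_{{\bf e}}-1\}$ contained in a level set $\{I_{{\bf e}}\leq c\}$, hence relatively compact; an u.s.c. function on a compact set attains its supremum, which shows both that the supremum $\varrho_{{\bf e}}$ is achieved (non-emptiness) and that $M_{{\bf e}}$, being the intersection of the closed set $\{\Phi\geq\varrho_{{\bf e}}\}$ with that compact level set, is compact.

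The main obstacle I anticipate is not Varadhan's lemma itself but the careful justification of the many-to-one identity and of the interchange of limits when $\XX$ is a general (non-compact) Polish space: one must check that the $o(1)$ error terms are genuinely uniform, that the functional $\mu\mapsto\int\log m\,d\mu$ is continuous for the weak topology on $\MM_1(\XX\times E)$ — which uses precisely the assumed continuity and boundedness of $\log m$ — and that no escape of mass causes a problem, for which boundedness of $\log m$ again suffices since it makes the Varadhan functional bounded and hence the moment condition automatic.
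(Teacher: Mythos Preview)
Your proposal is correct and follows essentially the same approach as the paper: both use the many-to-one identity (the paper's Lemma \ref{somed} with $F=1$) to write $\E_{{\bf e},\delta_x}(Z_n(\XX))=\E_{{\bf e},x}\big(\exp\big(n\int\log m\,dL_{n-1}^{{\bf e}}\big)\big)$ and then invoke Varadhan's lemma for the bounded continuous functional $\mu\mapsto\int\log m\,d\mu$. Your compactness argument via upper semicontinuity of $\Phi$ on a compact sublevel set is a slightly cleaner packaging of the same idea the paper carries out by extracting a convergent subsequence from a maximizing sequence; the content is identical.
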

\noindent  In particular, $\limsup_{n\rightarrow \infty} \frac{1}{n} \log Z_n(\XX) \leq \varrho_{{\bf e}}$ a.s. The limit can hold only on the survival event. It is the case under classical $N\log N$ moment assumption for finite state space $\XX$, see e.g. \cite{LPP}  for one type of individual and fixed environment and \cite{AK2} in random environment. But it is a rather delicate  problem when the number of types is infinite. \\

We introduce now the event
 $$\mathcal S:=\bigg\{\liminf_{n\rightarrow \infty} \frac{1}{n} \log Z_n(\XX)  \ \geq \  \varrho_{{\bf e}}\bigg\}.$$
 Conditionally on  $\mathcal S$, we 
 let $U_n$ be an individual uniformly chosen at random in generation $n$. Let us then focus on  its trait frequency  up to time $n$
 and the associated environment : 
$$\nu_n(A):=\frac{1}{n+1}\#\{ 0\leq i \leq  n  : (X_i(U_n), T^i{\bf e}) \in A\} \qquad (A\in \mathcal B_{\XX\times E}).$$
where $X_i(u)$ is the trait of the ancestor of  $u$ in generation $i$. We prove that the support of  $\nu_n$  converges in probability to $M_{{\bf e}}$ on the event $\mathcal S$.

\begin{Cor} \label{convsupp} Under the assumptions of  Theorem \ref{rho}, we further suppose that $\varrho_{{\bf e}}>0$
and $\mathcal S$ has positive probability.
Then,  for every $x\in \XX$,
$$\P_{{\bf e}, \delta_x}(\nu_n  \in F \vert \mathcal S) \stackrel{n\rightarrow\infty}{\longrightarrow} 0$$
for every closed set $F$ of $\mathcal M_1(\XX\times E)$ which is disjoint of $M_{{\bf e}}$.
\end{Cor}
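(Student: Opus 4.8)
The plan is to bound $\P_{{\bf e},\delta_x}(\nu_n\in F,\mathcal S)$ from above and then divide by $\P_{{\bf e},\delta_x}(\mathcal S)>0$. Write $\mathcal F_n$ for the $\sigma$-field generated by the population up to generation $n$, and for $|u|=n$ set $\nu_n^u:=\frac1{n+1}\sum_{i=0}^n\delta_{(X_i(u),T^i{\bf e})}$, so that $\nu_n=\nu_n^{U_n}$. Since, given $\mathcal F_n$, $U_n$ is uniform among the $Z_n(\XX)$ individuals of generation $n$,
$$\P_{{\bf e},\delta_x}(\nu_n\in F\mid\mathcal F_n)=\frac1{Z_n(\XX)}\sum_{|u|=n}\ind_{\nu_n^u\in F}\qquad\text{on }\{Z_n(\XX)\ge1\}.$$
I would then introduce, for $\epsilon>0$, the event $\mathcal S_n^\epsilon:=\{Z_n(\XX)\ge e^{(\varrho_{\bf e}-\epsilon)n}\vee1\}$. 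By definition of $\mathcal S$ one has $\ind_{\mathcal S\setminus\mathcal S_n^\epsilon}\to0$ almost surely, so $\P_{{\bf e},\delta_x}(\mathcal S\setminus\mathcal S_n^\epsilon)\to0$ by dominated convergence; hence it suffices to prove $\P_{{\bf e},\delta_x}(\nu_n\in F,\mathcal S_n^\epsilon)\to0$ for a suitable $\epsilon>0$.

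On $\mathcal S_n^\epsilon$ one has $1/Z_n(\XX)\le e^{-(\varrho_{\bf e}-\epsilon)n}$, so, dropping $\ind_{\mathcal S_n^\epsilon}$ afterwards,
$$\P_{{\bf e},\delta_x}(\nu_n\in F,\mathcal S_n^\epsilon)\le e^{-(\varrho_{\bf e}-\epsilon)n}\,\E_{{\bf e},\delta_x}\Big(\sum_{|u|=n}\ind_{\nu_n^u\in F}\Big).$$
Next I would apply the many-to-one formula to the lineage functional $u\mapsto\ind_{\nu_n^u\in F}$: the expectation on the right equals $\E_{{\bf e},x}\big(\ind_{L_n^{\bf e}\in F}\prod_{i=0}^{n-1}m(X_i,T^i{\bf e})\big)$, where $(X_i)$ is the auxiliary chain started at $x$ and $L_n^{\bf e}=\frac1{n+1}\sum_{i=0}^n\delta_{(X_i,T^i{\bf e})}$. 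Writing the product as $\exp\big((n+1)\int\log m\,dL_n^{\bf e}-\log m(X_n,T^n{\bf e})\big)$ and using boundedness of $\log m$, this is $\le e^{\|\log m\|_\infty}\E_{{\bf e},x}\big(\ind_{L_n^{\bf e}\in F}\exp((n+1)\int\log m\,dL_n^{\bf e})\big)$. Since $\mu\mapsto\int\log m\,d\mu$ is continuous and bounded on $\MM_1(\XX\times E)$, the upper bound of Varadhan's lemma restricted to the closed set $F$ — obtained by covering the compact set $F\cap\{I_{\bf e}\le\ell\}$ by finitely many small balls on which $\int\log m\,d\mu$ varies by at most a fixed $\delta$, and absorbing $F\cap\{I_{\bf e}>\ell\}$ through the LDP upper bound (negligible as $\ell\to\infty$) — yields
$$\limsup_{n\to\infty}\frac1n\log\E_{{\bf e},\delta_x}\Big(\sum_{|u|=n}\ind_{\nu_n^u\in F}\Big)\ \le\ \sup_{\mu\in F}\Big\{\int\log m\,d\mu-I_{\bf e}(\mu)\Big\}=:\varrho_{\bf e}(F).$$

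It remains to check that $\varrho_{\bf e}(F)<\varrho_{\bf e}$ whenever $F$ is closed and disjoint from $M_{\bf e}$; this is the same compactness argument that proves $M_{\bf e}$ compact and non-empty in Theorem \ref{rho}. A maximizing sequence $\mu_k\in F$ eventually satisfies $I_{\bf e}(\mu_k)\le\|\log m\|_\infty-\varrho_{\bf e}+1$, hence stays in a compact level set of the good rate function $I_{\bf e}$; any weak limit point $\mu_*$ lies in $F$ and, by upper semicontinuity of $\mu\mapsto\int\log m\,d\mu-I_{\bf e}(\mu)$, attains the value $\varrho_{\bf e}$, i.e. $\mu_*\in F\cap M_{\bf e}=\emptyset$, a contradiction. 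Fixing $\epsilon>0$ with $\varrho_{\bf e}(F)+2\epsilon<\varrho_{\bf e}$, the displayed bounds give $\P_{{\bf e},\delta_x}(\nu_n\in F,\mathcal S_n^\epsilon)\le e^{(\varrho_{\bf e}(F)-\varrho_{\bf e}+2\epsilon)n}\to0$ for $n$ large, so $\P_{{\bf e},\delta_x}(\nu_n\in F,\mathcal S)\to0$ and the claim follows upon dividing by $\P_{{\bf e},\delta_x}(\mathcal S)>0$. I expect the main obstacle to be the Varadhan-type upper bound over a \emph{closed} subset of the \emph{non-compact} space $\MM_1(\XX\times E)$: this is exactly where the goodness of $I_{\bf e}$ (compact level sets) and the boundedness of $\log m$ are indispensable, since they allow $F$ to be truncated to a compact set at asymptotically negligible cost.
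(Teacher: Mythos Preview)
Your proof is correct and follows essentially the same route as the paper's: bound the expected number of individuals $u$ with $\nu_n^u\in F$ via the many-to-one formula, show this expectation grows with exponential rate strictly below $\varrho_{\bf e}$, and compare with $Z_n(\XX)$ on $\mathcal S$. Two minor points of difference are worth mentioning. First, you invoke the Varadhan upper bound directly for the upper semicontinuous functional $\mu\mapsto \int\log m\,d\mu$ restricted to the closed set $F$ (your covering-of-level-sets sketch is exactly the proof of that variant); the paper instead dominates $\ind_{L_n\in F}\exp(n\int\log m\,dL_n)$ by $\exp(n\phi(L_n))$ for a \emph{continuous} bounded $\phi$ with $\phi\ge\int\log m$ on $F$, and then runs the standard Varadhan plus a compactness contradiction using $\phi_k(\mu)=k\,d(\mu,F)+\int\log m\,d\mu$. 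Second, your decomposition $\mathcal S\subset\mathcal S_n^\epsilon\cup(\mathcal S\setminus\mathcal S_n^\epsilon)$ with $\P(\mathcal S\setminus\mathcal S_n^\epsilon)\to0$ handles rigorously the fact that ``$Z_n(\XX)\ge e^{(\varrho_{\bf e}-\epsilon)n}$'' holds on $\mathcal S$ only eventually (with a random threshold), which the paper glosses over.
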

\noindent This result yields an information on the pedigree \cite{Jagers, NJ} or ancestral lineage of a typical individual. It ensures that the trait  frequency along the lineage of a typical individual
converges to one of the argmax of $\varrho_{{\bf e}}$. We are going a bit farther in the next Section, with a 
description of this  ancestral lineage via size biased random choice, see in particular Lemma \ref{manytoone}.

Let us  now  specify the theorem for stationary ergodic environment ${\bf \mathcal{E}}\in E$, under  Doeblin type assumptions. Following   \cite{timo}, we    let $\pi$ be a $T$ invariant ergodic probability, i.e. $\pi \circ T^{-1}=\pi$ and if $A\in \mathcal B_E$ satisfies
$T^{-1}A=A$, then $\pi(A) \in \{0,1\}$.    Then we need : 
   
\begin{AssA}   There exist a positive integer  $b$, a $T$ invariant subset $E'$ of $E$ and
 a measurable function $M : E \rightarrow [1,\infty)$ such that $\log M \in L^1(\pi)$,   $\pi(E')=1$ and for all $x,y \in \XX$, $A \in \mathcal B_{\XX}$ and ${\bf e} \in E'$,
$$
\label{timoD}
P^b(x, {{\bf e}},A)\leq M({\bf e})P^b(y, {\bf e}, A).
$$
\end{AssA}
\noindent We denote by $V_b(\XX\times E)$  the set  of bounded continuous functions that map $\XX\times E$ into $[1,\infty)$
to state the result.
\begin{Cor} \label{applD}
Under Assumption A, we further suppose that   $\log m(.,\mathcal E)$ is $\pi$ a.s. bounded and continuous. Then   $\pi$  a.s., for every $x\in \XX$,
$$ \lim_{n\rightarrow\infty} \frac{1}{n}\log \E_{{\bf \mathcal{E}}, \delta_x}(Z_n(\XX)) = \sup_{\mu \in \M_1(\XX\times E)} \left\{ \int \log (m(x,e))\mu(dx, de) -I(\mu)\right\},$$
where $I$ is defined by   
$$I(\mu):=\sup \left\{\int_{\XX\times E} \log\left(\frac{u(x,e)}{\int_{\XX}P(x,e, dy) u(y,Te) }\right) \mu(dx,de) : u\in V_b(\XX\times E)\right\}.$$
\end{Cor}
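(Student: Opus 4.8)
The plan is to derive the statement from Theorem \ref{rho}: since $\log m(\cdot,\mathcal E)$ is assumed $\pi$ almost surely bounded and continuous, the only missing ingredient is that the auxiliary chain $X$ satisfies, for $\pi$ almost every $\mathcal E$, a large deviation principle in the sense of the Definition above, with good rate function equal to the functional $I$ displayed in the statement. The relevant observation is that the kernel $P(x,{\bf e},dy)$ together with the shift $T$ makes $(X_k,T^k\mathcal E)_{k\geq 0}$ a (homogeneous) Markov chain on $\XX\times E$, with transition operator $u\mapsto\big((x,e)\mapsto\int_{\XX}P(x,e,dy)\,u(y,Te)\big)$, and that $L_n^{\mathcal E}=\frac1{n+1}\sum_{k=0}^n\delta_{X_k,T^k\mathcal E}$ is exactly the empirical occupation measure of this pair chain. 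The candidate rate function $I$ is then nothing but the Donsker--Varadhan entropy functional attached to this operator, the supremum being restricted to the test class $V_b(\XX\times E)$.

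The large deviation principle for this empirical measure is precisely the result invoked from \cite{timo}. Assumption A is the Doeblin / uniform ratio condition under which \cite{timo} proves a quenched large deviation principle: there is a $T$ invariant, $\pi$ full set of environments — which we may take inside $E'$ — such that for every deterministic starting trait $x\in\XX$ the measures $L_n^{\mathcal E}$ obey the upper and lower bounds of the Definition with the good rate function $I$, the bounds being uniform in $x$. Two points require attention when transcribing that result into the present framework. First, the uniformity over the initial trait $x$: this is exactly what the inequality $P^b(x,{\bf e},A)\leq M({\bf e})P^b(y,{\bf e},A)$ provides, as it allows one to compare the laws of the chain started from different points up to the $L^1(\pi)$ correction $\log M$, so the exponential rate is insensitive to the starting point. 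Second, one must carry the environment marginal $T^k\mathcal E$ inside the empirical measure, check that the resulting object on $\M_1(\XX\times E)$ still satisfies the principle, and verify that $I$ is a good rate function for the weak topology; here one uses that $\XX\times E$ is Polish, that $V_b(\XX\times E)$ is rich enough to generate the weak topology, and that Assumption A furnishes the exponential tightness giving compact level sets. This transcription is where the actual work lies.

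With this in hand, fix $\mathcal E$ in the $\pi$ full set on which both the above LDP holds and $x\mapsto\log m(x,\mathcal E)$ is bounded and continuous; restricting everything to $\XX\times E'$ we may regard $\log m$ as a bounded continuous function on the ambient space, so all hypotheses of Theorem \ref{rho} are met. Its conclusion then reads, for every $x\in\XX$,
$$\lim_{n\to\infty}\frac1n\log\E_{\mathcal E,\delta_x}(Z_n(\XX))=\sup_{\mu\in\M_1(\XX\times E)}\Big\{\int\log(m(x,e))\,\mu(dx,de)-I(\mu)\Big\},$$
which is the announced identity. The remaining ingredients — identifying the transition operator of the pair chain and quoting Theorem \ref{rho} — are routine bookkeeping; the delicate step is the middle one, namely checking that Assumption A really puts us inside the scope of \cite{timo} in exactly the form of the Definition (uniformity in the initial condition, and $I$ a good rate function on $\M_1(\XX\times E)$ written as a supremum over the restricted class $V_b$ rather than over all positive measurable functions).
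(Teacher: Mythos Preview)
Your proposal is correct and follows essentially the same route as the paper: invoke Theorem~3.3 of \cite{timo} under Assumption~A to obtain the quenched LDP with good rate function $I$ (in the sense of Definition~1, uniformly in the starting point), and then apply Theorem~\ref{rho}. The paper's own proof is simply these two sentences, while you spell out more carefully what the transcription from \cite{timo} entails; but the architecture is identical.
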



To prove these results, we need the following lemma, where $\mathcal{B}_b(\XX)$ is the set of bounded measurable functions on $\XX$.
\begin{Lem} \label{somed}
Let $F \in \mathcal{B}(\XX^k)$ non-negative. Then, for every $\nu \in \mathcal{M}_1(\XX)$,
$$\E_{{\bf e},\nu} \left(\sum_{\vert  u \vert =n} F(X_0(u),\ldots, X_n(u))\right)=\E_{{\bf e},\nu}\left(F(X_0,\ldots,X_n)\prod_{i=0}^{n-1} m(X_i, T^i{\bf e})\right)$$
where we recall that $X_i(u)$ is the trait of the ancestor of $u$ in generation $n$.
\end{Lem}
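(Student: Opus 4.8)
The plan is to prove the identity by induction on $n$, using the branching property together with the definition of the auxiliary transition kernel $P(x,{\bf e},dy)$. The case $n=0$ is immediate: both sides equal $\E_{{\bf e},\nu}(F(X_0))=\int_\XX F(x)\,\nu(dx)$, since the empty product is $1$ and the only individual of generation $0$ is the root, whose trait has law $\nu$.

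For the inductive step, I would first condition on the first generation. Writing an individual $u$ of generation $n+1$ as $u=iv$ with $i\in\{1,\dots,N(\emptyset)\}$ and $|v|=n$ in the subtree rooted at $i$, the branching property in environment $T{\bf e}$ gives
$$\E_{{\bf e},\delta_x}\!\left(\sum_{|u|=n+1}F(X_0(u),\dots,X_{n+1}(u))\right)
=\E\!\left(\sum_{i=1}^{N(x,{\bf e})}\E_{T{\bf e},\delta_{X_i}}\!\left(\sum_{|v|=n}F(x,X_0(v),\dots,X_n(v))\right)\right),$$
where $(X_1,\dots,X_{N(x,{\bf e})})$ are the traits of the children of the root, distributed according to the $P^{(k)}(x,{\bf e},\cdot)$. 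Applying the induction hypothesis to the inner expectation (with the function $(y_0,\dots,y_n)\mapsto F(x,y_0,\dots,y_n)$ and starting measure $\delta_{X_i}$, in environment $T{\bf e}$) converts it into $\E_{T{\bf e},\delta_{X_i}}\big(F(x,X_0,\dots,X_n)\prod_{j=0}^{n-1}m(X_j,T^{j+1}{\bf e})\big)$. Then I would take the expectation over the offspring traits: summing over $i$ and using $\sum_{i=1}^k P^{(k)}(x,{\bf e},\XX^{i-1}dy\XX^{k-i})$ together with the definition of $P$, the quantity $\E\big(\sum_{i=1}^{N(x,{\bf e})} g(X_i)\big)=m(x,{\bf e})\int_\XX P(x,{\bf e},dy)g(y)$ for any non-negative measurable $g$. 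This is exactly the identity that makes the size-biased kernel appear, and it produces the extra factor $m(x,{\bf e})=m(X_0,T^0{\bf e})$, reconstituting the full product $\prod_{i=0}^{n}m(X_i,T^i{\bf e})$ and the law of the auxiliary chain $X$ under $\P_{{\bf e},\delta_x}$. Finally, I would integrate over $x\sim\nu$ to pass from $\delta_x$ to a general $\nu\in\mathcal M_1(\XX)$, since both sides are linear (indeed, $\sigma$-additive) in the initial law.

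The only genuine point requiring care, rather than the induction bookkeeping, is the interchange of expectation and the (possibly infinite) sum $\sum_{i=1}^{N(x,{\bf e})}$ and the use of Fubini/Tonelli throughout: this is legitimate because $F\geq 0$, so every term is non-negative and all manipulations are justified by monotone convergence, with both sides allowed to take the value $+\infty$. I would also note that measurability of $x\mapsto \E_{T{\bf e},\delta_x}(\cdots)$ in the inductive step follows from assumption b) on the kernels $P^{(k)}$. No moment assumption on $N$ is needed since we never subtract.
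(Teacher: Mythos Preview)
Your proof is correct and follows essentially the same route as the paper: both argue by induction on $n$, peel off the first generation via the branching property, and use the identity $\E\big(\sum_{i=1}^{N(x,{\bf e})} g(X_i)\big)=m(x,{\bf e})\int_\XX P(x,{\bf e},dy)\,g(y)$ to produce the factor $m(X_0,{\bf e})$ and the transition $P$. The only cosmetic difference is that the paper first restricts to product functions $F(x_0,\ldots,x_n)=f_0(x_0)\cdots f_n(x_n)$ (leaving the extension to general $F$ to an implicit monotone-class argument), whereas you carry a general non-negative $F$ through the induction directly; your extra remarks on Tonelli and measurability make explicit what the paper takes for granted.
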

\begin{proof}
For every $f_0,\ldots,f_n \in \mathcal{B}(\XX)$ non-negative, by branching property
\Bea
&&\E_{{\bf e},\nu} \left(\sum_{\vert  u \vert =n}f_0(X_0(u))\cdots f_n(X_n(u)) \right) \\
&&\quad=\int \nu(dx_0)f_0(x_0) \int m_1(x_0,{\bf e}, dx_1) \E_{T{\bf e},\delta_{x_1}} 
\left(\sum_{\vert  u \vert =n-1} f_1(X_0(u))\cdots f_{n}(X_{n-1}(u)) \right).\Eea
where 
$$m_1(x_0,{\bf e}, dx_1)=\E_{{\bf e},x_0} \left(\#\{ \vert u \vert =1 : X(u) \in dx_1\}\right)=m(x_0,{\bf e})P(x_0,{\bf e},dx_1).$$
So by induction 
\Bea
&&\E_{{\bf e},\nu} \left(\sum_{\vert  u \vert =n}f_0(X_0(u))\cdots f_n(X_n(u)) \right)\\
&&\qquad =\int_{\XX^n\times A} \nu(dx_0)f(x_0)\prod_{i=0}^{n-1} m(x_i,  T^i{\bf e})P(x_i, T^i{\bf e},dx_{i+1})f(x_{i+1})
\Eea
It completes the proof.
\end{proof}

\begin{proof}[Proof of Theorem \ref{rho}]
The previous lemma applied to $F=1$ ensures that
$$\E_{{\bf e}, \nu}(Z_{n}(\XX))=\E_{{\bf e},\nu}\left(\prod_{i=0}^{n-1} m(X_i, T^i{\bf e})\right).$$
Thus 
$$\E_{{\bf e}, \nu}(Z_{n}(\XX))=\E_{{\bf e},\nu}\left(\exp\left(n\int_{\XX\times E} \log(m(x,e)) L_{n-1}^{{\bf e}}(dx,de)\right)\right) $$
As $\log m$ is bounded and continuous by assumption, so is 
$$\mu \in \mathcal{M}_1(\XX\times E) \rightarrow \phi(\mu)=\int_{\XX\times E} \log (m(x,e))\mu(dx,de).$$
Using the LDP principle satisfied by $L_n^{{\bf e}}$, 
 we can apply  Varadhan's lemma  (see \cite{DZ} Theorem 4.3.1) to the previous function to get the first part of the Theorem.
 

Let us now consider a sequence $\mu_n$ such that
 $$  \int_{\XX\times E} \log(m(x,e))\mu_n(dx de) -I_{{\bf e}}(\mu_n)\stackrel{n\rightarrow \infty}{\longrightarrow} \varrho_{{\bf e}}.$$
Then  $I_e(\mu_n)$ is upper bounded,   which ensures that 
  $\mu_n$ belongs to a sublevel set. By Definition $1$, such a set is compact
so can extract a subsequence $\mu_{n_k}$ which converges weakly in $\mathcal{M}(\XX,E)$.
 As $I_e$ is lower semicontinuous, the limit $\mu$ of this subsequence satisfies
$$\liminf_{k\rightarrow\infty} I_{{\bf e}}(\mu_{n_k}) \geq I_{{\bf e}}(\mu).$$
Recalling that $\phi$ is continuous, we get
$$ \varrho_{{\bf e}}=\lim_{n\rightarrow \infty} \left\{ \int_{\XX\times E} \log( m(x,e))\mu_{\phi(n)}(dx de) -I_{{\bf e}}(\mu_{\phi(n)}) \right\}\leq  \int \log( m(x,e))\mu(dx de) - I_{{\bf e}}(\mu)$$
and  $\mu$ is a maximizer. That ensures that $M_{{\bf e}}$ is compact and non empty.
\end{proof}

\begin{proof}[Proof of Corollary \ref{convsupp}]
 We define for any individual $u$ in generation $n$
$$\nu_n(u)(A)=\frac{1}{n+1}\sum_{0\leq i\leq n} \delta_{X_i(u)}.$$
Using Lemma \ref{somed} with $F(x_0,\ldots,x_n)=1(\frac{1}{n+1}\sum_{0\leq i\leq n} \delta_{x_i} \in F)$, we have
$$\E_{{\bf e}, \nu}\left(\# \{ u  : \vert u\vert =n, \nu_n(u)  \in F\}\right)=\E_{\nu}\left(\exp\left(n\int_{\XX\times E} \log(m(x,e)) L_{n-1}^{{\bf e}}(dx,de)\right)1_{L_n^{{\bf e}} \in F}\right)$$
Applying again  Varadhan's to any   bounded continuous function $\phi :  \mathcal M_1(\XX\times E) \rightarrow \R$ such that, for
every $\mu \in F$,
\be
\label{INEQ}
\phi(\mu)\leq \int_{\XX\times E} \log(m(x,e)) \mu(dx,de)
\ee
we get,
\Bea 
\limsup_{n\rightarrow\infty} \frac{1}{n} \log \E_{{\bf e}, \nu}\left(\# \{ u  : \vert u\vert =n, F_n(u)  \in F\}\right)
&\leq & \sup\{ \phi(\mu)-I_{{\bf e}}(\mu) : \mu \in  \mathcal M_1(\XX\times E)\}. 
\Eea
Let us now check that we can find $\phi$ such that the right hand side is strictly less than $\varrho_{{\bf e}}$. We proceed by contradiction and assume that for every $\phi$ continuous and bounded
which satisfies (\ref{INEQ}), we have $\sup\{ \phi(\mu)-I_{{\bf e}}(\mu) : \mu \in  \mathcal M_1(\XX\times E) \}=\varrho_{{\bf e}}$. Then using the fact $I_{{\bf e}}$ is a good rate function, we obtain that
there exists $\mu(\phi)$ such that   $\phi(\mu(\phi))-I_{{\bf e}}(\mu(\phi))=\varrho_{{\bf e}}$, thanks to the same arguments as the end of the previous proof. Recalling that $\mathcal M_1(\XX\times E)$ can be metrizable by a distance $d$, we define now $\phi_n(\mu):=-nd(\mu,F)+\int_{\XX\times E} \log(m(x,e)) \mu(dx,de)$. We use again the compactness of
 sublevel sets of $I_{{\bf e}}$ to extract a sequence $\mu(\phi_{n_k})$ which converges to $\mu_0$. Then $\mu_0 \in F \cap  M_{{\bf e}}$, which yields the contradiction.

Thus we can choose $\rho'$ such that
$$\limsup_{n\rightarrow\infty} \frac{1}{n} \log \E_{{\bf e}, \nu}\left(\# \{ u  : \vert u\vert =n, F_n(u)  \in F\}\right) <\varrho' < \varrho_{\mathbf e}.$$ 
Adding that
\Bea
\P(\nu_n(U_n) \in F \vert \mathcal S)& \leq & \E \left(\# \{ u  : \vert u\vert =n, \nu_n(u)  \in F\}/Z_n(\XX) \vert S\right) \\
&\leq & e^{-\varrho' n}  \E \left(\# \{ u  : \vert u\vert =n, \nu_n(u)  \in F\}\right) /\P(\mathcal S)
\Eea
for $n$ large enough by definition of $\mathcal{S}$ and that the left hand side goes to $0$ ends up the proof.
\end{proof}

\begin{proof}[Proof of Corollary \ref{applD}]
Under Assumption A,   Theorem 3.3 \cite{timo} ensures that there exists a function $I$ which satisfies   $\pi$ a.s. the  Definition $1$ (uniformly with respect to $x \in \XX$).  The result is then a direct  application of  Theorem \ref{rho}.
\end{proof}

We have given above an expression of the mean growth rate and specified the ancestral lineage of surviving individuals.
It leaves several open questions and we are considering the following ones in the next Section, which are linked :  \\
Does the process grows like its mean when it survives ? \\
How is the population  spread for large times ?

\section{Law of large numbers}

We consider the mean measure under the environment  ${\bf e}$ :
$$m_n(x,{\bf e}, A):=\E_{{\bf e}, \delta_x}\left( Z_n(A)\right)= \mathbb E_{{\bf e}, \delta_x}\left(\#\{ u : \vert u\vert=n,  X(u) \in A\} \right) \qquad (A \in \mathcal B_{\XX}).$$
It yields the mean number of descendant in generation $n$,  whose trait belongs $A$,  of an initial individual with trait $x$. Similarly
we consider its mean number of descendants in generation $n$
We define a new family of  Markov kernel $Q_n$ by 
$$Q_{n}(x,{\bf e}, dy):=  m_1(x, {\bf e},dy)\frac{m_{n-1}(y,T{\bf e},\XX)}{m_n(x,{\bf e},\XX)}.$$
The fact that $Q_{n}(x,{\bf e}, \XX)=1$ for all $n \in \N,x \in \XX,{\bf e} \in E$ comes directly from the branching property.
We introduce  the associated semigroup,  more precisely the successive composition of $Q_j$ between the generations $i$ and $n$ :
$$Q_{i,n}(x,{\bf e}, A)=Q_{n-i}(x,T^i{\bf e}, .)*Q_{n-i-1}(.,T^{i+1}{\bf e}, .)* \cdots * Q_{1}(.,T^{n-1}{\bf e}, .)(A),$$
where we recall the notation $Q(x,.)*Q'(.,.)(A)=\int_{\XX} Q(x,dy)Q'(y,A)$.
The next section links the semigroups $m_n$ and $Q_{0,n}$.

\subsection{The auxiliary process and the many-to-one formula}

The following many-to-one formula links the expectation of the number of individuals whose trait belongs  to  $A$ to the probability that the Markov 
chain associated to  the kernel $Q_{n}$ belongs to $A$.

\begin{Lem} \label{manytoone} For all  $n \in \N,x \in \XX$ and $F \in \mathcal{B}(\XX^{n+1})$ non-negative, we have
$$\E_{{\bf e}, \delta_x}\left(\sum_{\vert u\vert =n}F(X_0(u),\ldots,X_n(u))\right)=m_n(x,{\bf e},\XX) \E_{{\bf e}, x}(F(Y_0^{(n)},\ldots,Y_n^{(n)})),$$
where $(Y_i^{(n)} : i=0,\ldots,n)$ is a non-homogeneous Markov chain with kernels $(Q_{i,n}(.,{\bf e},.) : i=0,\ldots,n-1)$.
In particular for each $f \in \mathcal{B}(\XX)$ non-negative,
 $$m_n(x,{\bf e}, f)=m_n(x,{\bf e},\XX)  Q_{0,n}(x,{\bf e},f),$$
where we recall the notation $\nu (f)=\int_{\XX} f(y) \nu(dy)$.
\end{Lem}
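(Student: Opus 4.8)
The plan is to prove the many-to-one formula by induction on $n$, following the same strategy as in Lemma \ref{somed}, but now keeping track of the trait-dependent weighting that produces the non-homogeneous kernels $Q_{i,n}$ rather than the flat product of means. The base case $n=0$ is immediate: both sides equal $F(x)$ since $m_0(x,{\bf e},\XX)=1$ and $Y_0^{(0)}=x$ deterministically.

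\medskip
\noindent\textbf{Main computation.} First I would establish the identity for product functions $F(x_0,\ldots,x_n)=f_0(x_0)\cdots f_n(x_n)$ with $f_i\in\mathcal B(\XX)$ non-negative, since these generate $\mathcal B(\XX^{n+1})$ by a monotone class argument. Applying the branching property exactly as in the proof of Lemma \ref{somed}, one gets
$$\E_{{\bf e},\delta_x}\Big(\sum_{|u|=n}f_0(X_0(u))\cdots f_n(X_n(u))\Big)=f_0(x)\int_{\XX} m_1(x,{\bf e},dx_1)\,\E_{T{\bf e},\delta_{x_1}}\Big(\sum_{|u|=n-1}f_1(X_0(u))\cdots f_n(X_{n-1}(u))\Big).$$
By the induction hypothesis applied in environment $T{\bf e}$ to the chain of length $n-1$, the inner expectation equals $m_{n-1}(x_1,T{\bf e},\XX)$ times $\E_{T{\bf e},x_1}$ of the product of $f_1,\ldots,f_n$ along the non-homogeneous chain with kernels $Q_{i,n-1}(.,T{\bf e},.)$. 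The key algebraic step is then to recognize that
$$f_0(x)\int_{\XX} m_1(x,{\bf e},dx_1)\,m_{n-1}(x_1,T{\bf e},\XX)\,(\cdots)=m_n(x,{\bf e},\XX)\,f_0(x)\int_{\XX}Q_n(x,{\bf e},dx_1)\,(\cdots),$$
which is exactly the definition of $Q_n$ rearranged, using the branching identity $m_n(x,{\bf e},\XX)=\int m_1(x,{\bf e},dx_1)m_{n-1}(x_1,T{\bf e},\XX)$ to check $Q_n(x,{\bf e},\XX)=1$. It remains to verify that the shifted semigroup matches: the kernel $Q_{i,n}(.,{\bf e},.)$ acting between generations $i$ and $i+1$ of the length-$n$ chain coincides, after the first step, with $Q_{i-1,n-1}(.,T{\bf e},.)$ of the length-$(n-1)$ chain in environment $T{\bf e}$, which is a direct unwinding of the definition of $Q_{i,n}$ as a composition of the $Q_j(.,T^\bullet{\bf e},.)$.

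\medskip
\noindent\textbf{Conclusion and the special case.} Once the product case is settled, a monotone class / monotone convergence argument extends it to all non-negative $F\in\mathcal B(\XX^{n+1})$. The last displayed assertion, $m_n(x,{\bf e},f)=m_n(x,{\bf e},\XX)\,Q_{0,n}(x,{\bf e},f)$, is then just the case $F(x_0,\ldots,x_n)=f(x_n)$, since $\E_{{\bf e},x}(f(Y_n^{(n)}))=Q_{0,n}(x,{\bf e},f)$ by the definition of the law of the non-homogeneous chain.

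\medskip
\noindent\textbf{Main obstacle.} The routine parts (branching property, monotone class extension) are standard; the delicate point is purely bookkeeping: correctly aligning the indices of the non-homogeneous kernels $Q_{i,n}$ under the environment shift $T$, so that the induction hypothesis (stated for length $n-1$ in environment $T{\bf e}$) plugs in cleanly. Concretely one must check that the telescoping of the ratios $m_{n-1-i}(\cdot,T^{i+1}{\bf e},\XX)/m_{n-i}(\cdot,T^i{\bf e},\XX)$ across $i=0,\ldots,n-1$ produces exactly $m_n(x,{\bf e},\XX)^{-1}$ times the product of the $m_1$ kernels, which is where the branching property is used repeatedly and where an off-by-one error in the environment index is easy to make.
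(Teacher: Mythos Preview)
Your proposal is correct and follows essentially the same idea as the paper: both rest on the telescopic identity
\[
\prod_{i=0}^{n-1} Q_{n-i}(x_i,T^i{\bf e},dx_{i+1})
=\frac{1}{m_n(x_0,{\bf e},\XX)}\prod_{i=0}^{n-1} m_1(x_i,T^i{\bf e},dx_{i+1}),
\]
which your induction unwinds step by step. The only difference is economy: the paper writes this telescoping in one line and then invokes Lemma~\ref{somed} directly (which already gives the right-hand side as $\prod m(x_i,T^i{\bf e})P(x_i,T^i{\bf e},dx_{i+1})$ integrated against $F$), so there is no need to redo the branching-property induction or the monotone-class extension; both are already contained in Lemma~\ref{somed}.
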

\noindent We note that $m_n(x,{\bf e}, \XX)$ is the mean number of individuals in generation $n$  considered in the previous Section.
Here, 
combining the branching property and the lemma
above yields an other expression of the growth rate:
$$\frac{m_{n+1}(x, {\bf e}, \XX)}{m_n(x,{\bf e}, \XX)}=\int_{\XX} m(y,T^n{\bf e})  Q_{0,n}(x,{\bf e}, dy).$$
The many-to-one formula yields a  spine decomposition of the size-biased tree :  the dynamic of the trait along the spine
follows the non-homogeneous Markov chain $Y$. Going further and describing the whole process seen from the spine requires
additional work. The reproduction of the individuals along the spine is a size biased law and independent process then grow following the original distribution. Such a   decomposition has been firstly  achieved for
Galton-Watson processes in \cite{LPP}. We refer to \cite{KLPP} for an extension to multitype Galton-Watson processes, when the bias among the population relies on the eigenvector of the mean operator, \cite{GeBa} for continuous time and \cite{Geiger} for related 
results in  varying environment. \\
The second part of the Lemma is an extension of
the many one-to-one formula for binary tree \cite{guyon}, Galton-Watson trees \cite{DelMar} and Galton- Watson trees in stationary random environments
\cite{BanHua}.  In continuous time, many-to-one formula and formula for forks  can been found 
in \cite{BDMT}. But these later do not let the reproduction depend on the trait.
We refer to \cite{BC, HR, HRb} for other many-to-one formulas and asymptotic results when reproduction law depend on the trait in some particular cases. \\
For branching random walk in random environment, let us mention the use of induced random walk eliminiating the branching, see e.g. \cite{CoPo}.

\begin{proof} By a telescopic argument : 
$$ \prod_{i=0}^{n-1} Q_{n-i}(x_i,  T^i{\bf e},dx_{i+1})=\frac{m_0(x_{n},{\bf e},\XX)}{m_n(x_0,{\bf e},\XX)} \prod_{i=0}^{n-1} m_1(x_i, T^i{\bf e}, dx_{i+1}).$$
Adding that $m_1(x_i, T^i{\bf e}, dx_{i+1})=m(x_i, T^i{\bf e})P(x_i, T^i{\bf e}, dx_{i+1})$, 
the first part of the lemma is a consequence of Lemma \ref{somed}. 
We then deduce the second part by applying the  identity obtained to $F(x_0,\ldots,x_n)=f(x_n)$.
\end{proof}


Our aim is now to get ride of the expectation and obtain the repartition of the population for large times.
We want to derive it  from the asymptotic distribution of this auxiliary Markov chain with kernel $Q_{n}$ and prove a law of large number
on the proportions of individuals whose trait traited belongs to  $A$.
One approach would be to a find a martingale via maximal eigenvalue and eigenvector, as for finite type and fixed environment. It has been extended 
to branching processes with infinite number of types  in \cite{AthreyaB} but the assumptions required are not easily fulfilled,   at least regarding 
the motivations from biology and ecology we give  in this work.
  Moreover the  generalization to varying environment seems  more adapted to  the technicals described here. 
Thus, we are here inspired by ideas and technics developed in  \cite{AKh,AKh2}
using the branching property  or that in \cite{guyon} relying on $L^2$ computations.

\subsection{Branching decomposition}
In this part,  we focus on the particular case when extinction does not occur and  actually assume that the population has a positive growth rate.
\label{dec}
We have then the following strong law of large numbers, on the following event ensuring geometric growth : 
$$ \mathcal T :=\left\{\forall n, \ Z_n(\XX)>0; \ \liminf_{n\rightarrow\infty} \frac{Z_{n+1}(\XX)}{Z_n(\XX)} >1\right\}.$$

\begin{Thm} \label{brchthm} Let us fix ${\bf e} \in E$ and $\mathcal F$ a bounded subset of  $\mathcal{B}(\XX)$. We assume 
that there exists a measure $Q$ with finite first moment such that for all
$x\in \XX, k,l\geq 0,$
\be
\label{domsto}
\P(N(x,T^k{\bf e}) \geq l)\leq Q[l,\infty).
\ee
Assume also that there exists a sequence of probability measure $\mu_n$ such that
\be
\label{strerg}
\sup_{\lambda \in \mathcal{M}_1(\XX)} \big\vert Q_{i,n}(\lambda,T^i{\bf e}, f\circ f_n)- \mu_n(f) \big\vert \longrightarrow 0,
\ee
uniformly for $n-i\rightarrow \infty$ and $f\in \mathcal F$. Then, 
\be
\label{cvf} 
\frac{f_n.Z_n(f)}{Z_n(\XX)}  - \mu_n(f)\stackrel{n\rightarrow\infty}{\longrightarrow} 0  \qquad \P_{{\bf e}} \text{ a.s. on the event } \mathcal T.
 \ee
\end{Thm}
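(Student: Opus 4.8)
The plan is to control the $L^2(\P_{\bf e})$-distance between $f_n.Z_n(f)/Z_n(\XX)$ and $\mu_n(f)$ along a suitable subsequence, then use the branching structure to upgrade to almost-sure convergence on $\mathcal T$. First I would write, for a fixed $f \in \mathcal F$,
\Bea
f_n.Z_n(f) - Z_n(\XX)\mu_n(f) = \sum_{|u|=n}\big(f(f_n(X(u))) - \mu_n(f)\big),
\Eea
and estimate the second moment of this centered sum. Expanding the square gives a diagonal term $\sum_{|u|=n}(\cdots)^2$ and an off-diagonal term $\sum_{u\neq v,|u|=|v|=n}(\cdots)(\cdots)$, which I would organize according to the generation $|u\wedge v|=j$ of the most recent common ancestor. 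Here the many-to-one formula (Lemma \ref{manytoone}) handles the diagonal term — it is bounded by $m_n(x,{\bf e},\XX)\cdot\|f\|_\infty^2$ up to constants — and a ``many-to-two'' computation, obtained by the same branching-property induction as in Lemma \ref{somed} but splitting at generation $j$ where the spine forks, handles the off-diagonal term. The domination hypothesis \eqref{domsto} is exactly what is needed to bound the size-biased number of pairs of children produced at the branch point, uniformly in $x$ and in the environment shift $T^k{\bf e}$, so that the two subtrees above level $j$ each contribute a factor comparable to $m_{n-j}$ times a conditional expectation of the form $Q_{j,n}(\cdot,T^j{\bf e}, f\circ f_n)$.

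The key point is then that the ergodicity hypothesis \eqref{strerg} forces both $Q_{0,n}(x,{\bf e},f\circ f_n)$ and, for each fixed $j$, $Q_{j,n}(\,\cdot\,,T^j{\bf e},f\circ f_n)$ to be uniformly close to the \emph{same} deterministic quantity $\mu_n(f)$ once $n-j$ is large. Plugging this into the two-moment expansion, the off-diagonal contribution from pairs branching at any fixed finite level $j$ is asymptotically $\mu_n(f)^2$ times the expected number of such pairs, which cancels against $(\mathbb E Z_n(\XX))^2\mu_n(f)^2$ up to $o$ of it; the pairs that branch at level $j$ with $n-j$ bounded contribute a term that, after dividing by $(\mathbb E Z_n(\XX))^2$, is controlled by the geometric decay coming from $\liminf Z_{n+1}(\XX)/Z_n(\XX)>1$ on $\mathcal T$ together with \eqref{domsto}. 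This yields
\Bea
\E_{{\bf e},\delta_x}\!\left(\Big(\frac{f_n.Z_n(f)}{Z_n(\XX)}-\mu_n(f)\Big)^2 \mathbf{1}_{\mathcal T'}\right)\ \longrightarrow\ 0
\Eea
along the whole sequence, where $\mathcal T'$ is the event that the growth ratio eventually exceeds a fixed $1+\varepsilon$; letting $\varepsilon\downarrow 0$ exhausts $\mathcal T$. From $L^2$-convergence I extract an almost-sure convergent subsequence, and then close the gap between subsequential times using that on $\mathcal T$ the ratios $Z_{n+1}(\XX)/Z_n(\XX)$ stay bounded above and below, so one generation changes $f_n.Z_n(f)/Z_n(\XX)$ by a controlled amount; combined with the assumed uniformity of \eqref{strerg} in $\mathcal F$ this gives the claimed a.s.\ convergence, uniformly over $f\in\mathcal F$.

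The main obstacle I anticipate is the off-diagonal/many-to-two bookkeeping: making the splitting-at-the-common-ancestor argument rigorous requires a clean ``many-to-two'' analogue of Lemma \ref{somed} with the correct $Q$-semigroup weights above the branch point, and then one must show that the hypothesis \eqref{strerg} — which is stated only for the chain started from an arbitrary \emph{probability} $\lambda$ — really does apply to the conditional law of the trait at the branch point. A secondary difficulty is the passage from $L^2$ along a subsequence to full a.s.\ convergence on the random event $\mathcal T$, since $\mathcal T$ is not of the form ``$W>0$'' for an $L^2$ martingale $W$; the trick is to work on the slightly smaller deterministic-rate events $\{\liminf Z_{n+1}(\XX)/Z_n(\XX)\ge 1+\varepsilon\}$, on which $Z_n(\XX)$ grows at least like $(1+\varepsilon)^n$, so that the $L^2$ bound divided by $Z_n(\XX)^2$ is summable and Borel--Cantelli applies directly, avoiding the subsequence step altogether.
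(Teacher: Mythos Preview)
Your approach is the $L^2$/many-to-two route, which is \emph{not} how the paper proves this theorem; the paper reserves that route for the weak LLN (Theorem~\ref{LGNL2}) under explicit second-moment hypotheses. Here the paper instead uses the Athreya--Kang branching decomposition: write $Z_{n+p}=\sum_{|u|=n}Z_p^{(u)}$, apply a conditional strong law of large numbers (Lemma~\ref{LemmLGN}, which needs only \emph{first}-moment domination) to the centered subtree sizes to show $\sum_{|u|=n}m_p(X(u),T^n{\bf e},\XX)/Z_{n+p}(\XX)\to 1$ a.s.\ on $A_{n_0,l}$, then split $f_{n+p}.Z_{n+p}(f)/Z_{n+p}(\XX)-\mu_{n+p}(f)$ into three pieces, invoke Lemma~\ref{LemmLGN} again on the first piece, bound the second piece by $\sup_{\lambda}|Q_{n,n+p}(\lambda,\cdot,f\circ f_{n+p})-\mu_{n+p}(f)|$ via the many-to-one formula, and finally let $p\to\infty$ using~\eqref{strerg}.

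There is a genuine gap in your plan: the off-diagonal/many-to-two computation requires controlling the expected number of \emph{pairs} of children at the branch point, i.e.\ $\E[N(x,T^k{\bf e})(N(x,T^k{\bf e})-1)]$, uniformly in $x,k$. But hypothesis~\eqref{domsto} says only that $N$ is stochastically dominated by a law $Q$ with finite \emph{first} moment, so $\sup_{x,k}\E[N(x,T^k{\bf e})^2]$ may be infinite and your second-moment expansion does not get off the ground. This is not a technicality you can patch: the whole point of the branching-decomposition proof via Lemma~\ref{LemmLGN} (which ultimately rests on Kurtz's $L^1$ LLN inequalities~\cite{Kurtz72}) is that it delivers a.s.\ convergence on $\mathcal T$ from first-moment control alone.

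Two smaller issues also block your argument as written. First, multiplying by $\mathbf 1_{\mathcal T'}$ inside the expectation and then running a many-to-two computation is problematic, because $\mathcal T'$ is a tail event and destroys the conditional-independence structure you need at each generation. Second, even granting $L^2\to 0$, your Borel--Cantelli step needs a \emph{summable} rate, which nothing in~\eqref{strerg} provides; the paper's Theorem~\ref{coras} gets a.s.\ convergence via $L^2$ only under the extra summability hypothesis~\eqref{Hyp2as}.
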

This result extends \cite{AKh,AKh2} to the case when the reproduction law may depend on the trait and to
to time varying environment. Moreover, here the Markov kernel
$P^{(k)}$ is not forced to be a direct product of the same kernel. It yields a strong law
of large numbers relying on the uniform ergodicity of the auxiliary Markov chain $Q_{i,n}$.  The assumption of   a.s. survival and positive growth rate 
will be relaxed in the next part using $L^2$ assumptions. 

We first state a  result on the sum of independent random variables, which is being used several time.  It is an easy extension of Lemma $1$ in \cite{AKh}, which
itself is proved using  \cite{Kurtz72}.
\begin{Lem} \label{LemmLGN}
Let $\{\mathcal F\}_0^{\infty}$ be a filtration contained in $(\Omega, \mathcal B, \P)$. Let $\{ X_{n,i} : n,i\geq 1\}$ be r.v. such that for each $n$,
 $\mathcal{F}_n$
$\{ X_{n,i} : i\geq 1\}$ are centered independent r.v. Let $\{N_n :  n\geq 1\}$ be non-negative integer valued r.v. such that for each $n$, $N_n$ is $\mathcal F_n$ measurable. \\
We assume that there exists a random measure $Q$ with finite first moment such,
$$\forall t>0, \qquad \sup_{i,n\geq 1} \P( \vert X_{n,i} \vert >t \vert \mathcal F_n)\leq Q(t,\infty) \quad \text{a.s.}$$
Then for each $n_0\geq 1$ and $l>1$,
$$\frac{1}{N_n} \sum_{i=1}^{N_n}  X_{n,i} \stackrel{n\rightarrow \infty}{\longrightarrow} 0$$
a.s. on the event $\{ \forall n \geq n_0 :  N_n>0 ; N_{n+1}/N_n\geq l \}.$
\end{Lem}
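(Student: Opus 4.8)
The plan is to adapt the argument of \cite[Lemma~1]{AKh}, itself based on the truncation inequalities of \cite{Kurtz72}, the only new ingredient being to condition systematically on $\mathcal F_n$. All estimates below are made conditionally on $\mathcal F_n$; since the random measure $Q$ has a.s.\ finite first moment $\theta:=\int_0^\infty Q(t,\infty)\,\d t<\infty$, one may argue pathwise in $Q$, and enlarging the filtration if necessary we also assume each $X_{n,i}$ measurable for $\sigma(\mathcal F_n,\{X_{m,j}:m<n\})$, so that a conditional Borel--Cantelli lemma is available. Fix $n_0$, $l$, and put $A:=\{\forall n\geq n_0:\ N_n>0,\ N_{n+1}/N_n\geq l\}$ and $A_n:=\{N_n>0,\ N_k/N_{k-1}\geq l\text{ for }n_0<k\leq n\}$; then $A\subseteq A_n\in\mathcal F_n$ and $N_n\geq l^{\,n-n_0}$ on $A_n$, so $N_n\to\infty$ geometrically there. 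With $S_n:=\sum_{i=1}^{N_n}X_{n,i}$, it suffices to show that for each $\epsilon>0$ one has $\limsup_n|S_n|/N_n\leq\epsilon$ $\P$-a.s.\ on $A$; letting $\epsilon=1/m\downarrow0$ then gives $S_n/N_n\to0$ a.s.\ on $A$.

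Working conditionally on $\mathcal F_n$ on the event $A_n$, so that $N_n$ and the law of the independent centered family $X_{n,1},\dots,X_{n,N_n}$ are frozen, I would truncate at the level $a_n:=N_n/\log N_n$ and write $S_n=T_n+C_n+L_n$, where $T_n:=\sum_i\big(X_{n,i}\ind_{\{|X_{n,i}|\leq a_n\}}-\E(X_{n,i}\ind_{\{|X_{n,i}|\leq a_n\}}\mid\mathcal F_n)\big)$, $C_n:=\sum_i\E(X_{n,i}\ind_{\{|X_{n,i}|\leq a_n\}}\mid\mathcal F_n)$, and $L_n:=\sum_i X_{n,i}\ind_{\{|X_{n,i}|>a_n\}}$. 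For the large part, $\P(\exists\,i\leq N_n:\,|X_{n,i}|>a_n\mid\mathcal F_n)\leq N_nQ(a_n,\infty)$, and since the $N_n$ are multiplicatively $l$-separated on $A_n$, a telescoping comparison with $\int_0^\infty Q(t,\infty)\,\d t$ (after grouping boundedly many consecutive indices if $l$ is close to $1$) gives $\sum_n\ind_{A_n}N_nQ(a_n,\infty)\leq C\theta<\infty$ a.s.; by conditional Borel--Cantelli, $L_n=0$ for all large $n$, a.s.\ on $A$. Since $\E(X_{n,i}\mid\mathcal F_n)=0$, we have $C_n=-\sum_i\E(X_{n,i}\ind_{\{|X_{n,i}|>a_n\}}\mid\mathcal F_n)$, so $|C_n|/N_n\leq\int_{a_n}^\infty Q(t,\infty)\,\d t+a_nQ(a_n,\infty)\to0$ on $A$, again because $\theta<\infty$.

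It remains to control $T_n$. Its summands are conditionally independent, centered, bounded by $2a_n$, with conditional variances summing to at most $2\theta N_na_n$ (using $\E(X_{n,i}^2\ind_{\{|X_{n,i}|\leq a_n\}}\mid\mathcal F_n)\leq 2\int_0^{a_n}tQ(t,\infty)\,\d t\leq2\theta a_n$), so Bernstein's inequality gives, on $A_n$,
$$\P\big(|T_n|>\epsilon N_n\mid\mathcal F_n\big)\ \leq\ 2\exp\!\Big(-\frac{\epsilon^{2}N_n^{2}}{4\theta N_na_n+\tfrac43\epsilon a_nN_n}\Big)\ =\ 2\exp\!\Big(-\frac{\epsilon^{2}\log N_n}{4\theta+\tfrac43\epsilon}\Big)\ =\ 2\,N_n^{-c\epsilon^{2}}$$
with $c=c(\theta,\epsilon)>0$; on $A_n$ this is at most $2\,l^{-c\epsilon^{2}(n-n_0)}$, hence $\sum_n\P(|T_n|>\epsilon N_n,\,A_n)<\infty$. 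By Borel--Cantelli, $|T_n|\leq\epsilon N_n$ for all large $n$ a.s.\ on $A$, and combining this with $L_n=0$ eventually and $|C_n|/N_n\to0$ yields $\limsup_n|S_n|/N_n\leq\epsilon$ a.s.\ on $A$, as required.

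The step I expect to be the real obstacle is this choice of truncation level: it must be large enough that excursions of the $X_{n,i}$ above it are summable over $n$ --- which forces one to retain the \emph{actual} random, multiplicatively $l$-separated values $N_n$ in the estimate (the crude bound $N_n\geq l^{n-n_0}$ being insufficient) and to exploit the finite first moment of $Q$ through the telescoping inequality --- yet at the same time $o(N_n)$, so that Bernstein's inequality gives a bound decaying like a power of $N_n$, hence geometrically in $n$, rather than only to a constant; $a_n=N_n/\log N_n$ reconciles these two demands. The remaining ingredients (conditional Borel--Cantelli, the filtration bookkeeping, the pathwise handling of the random measure $Q$) are routine.
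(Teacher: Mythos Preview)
Your overall strategy coincides with the paper's: condition on $\mathcal F_n$, run the truncation argument of \cite{AKh,Kurtz72}, and apply (conditional) Borel--Cantelli to the events $\{|S_n|>\delta N_n\}\cap A_n$. The paper gives no further detail beyond this and simply refers to \cite{AKh}; you supply more. There is, however, a genuine gap in your large-part estimate, caused by the truncation level $a_n=N_n/\log N_n$. The telescoping comparison you invoke controls $\sum_n a_n\,Q(a_n,\infty)$ (it is the $a_n$ that are eventually multiplicatively separated and that must be compared with $\int Q(t,\infty)\,\d t$), not $\sum_n N_n\,Q(a_n,\infty)$; these differ by the factor $N_n/a_n=\log N_n$, which is of order $n$. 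Concretely, take $Q(t,\infty)=1/(t\log^2 t)$ for $t>e$: then $\theta=\int Q(t,\infty)\,\d t<\infty$, but with $N_n=l^{\,n}$ one finds $N_nQ(a_n,\infty)\sim 1/(n\log l)$, so $\sum_n N_nQ(a_n,\infty)=\infty$ and the Borel--Cantelli step for $L_n$ fails.

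The remedy is to truncate at level $N_n$ (or $\epsilon N_n$) and to replace Bernstein by Chebyshev for $T_n$. The large-part sum $\sum_n N_nQ(N_n,\infty)$ is then bounded by $\tfrac{l}{l-1}\,\theta$ exactly via your telescoping, while for $T_n$ the conditional variance is at most $N_n\,V(N_n)$ with $V(b):=2\int_0^b tQ(t,\infty)\,\d t$, so Chebyshev gives $\P(|T_n|>\epsilon N_n\mid\mathcal F_n)\leq V(N_n)/(\epsilon^{2}N_n)$. Interchanging sum and integral and using the geometric separation of the $N_n$ once more yields $\sum_n V(N_n)/N_n\leq \tfrac{2l}{l-1}\,\theta<\infty$, which is summable. (With truncation at $N_n$ Bernstein only returns a constant bound, because the $M$-term in its denominator is of order $N_n$; this is presumably what led you to shrink $a_n$, but it is the sharper variance $V(N_n)$ together with Chebyshev, not a smaller truncation level, that makes the sum converge.) Your treatment of the centering term $C_n$ is correct and unchanged.
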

\begin{proof} The proof can be simply adapted from the proof of Lemma 1 in \cite{AKh}. For any $\delta>0$ and $l>1$, we define
$$A_{n}:=\left\{\left\vert\frac{1}{N_n} \sum_{i=1}^{N_n}  X_{n,i} \right\vert >\delta; \ \forall k=n_0,\ldots, n :  \frac{N_{k}}{N_{k-1}}\geq l \right\}$$ 
and prove similarly that $\sum_{n\geq n_0} \P(A_n \vert \mathcal F_n)<\infty$.
\end{proof}
We use first this lemma to prove the following result, with
$$A_{n_0,l}:=\{ \forall n \geq n_0 :  N_n>0 ; Z_{n+1}(\XX)/Z_n(\XX) \geq l \}.$$
\begin{Lem} \label{LemLGN} For each $n_0\geq 0$ and $l>1$,
\be
\label{cvmoy}
\frac{1}{Z_{n+p}(\XX)} \sum_{ \vert u\vert=n } m_p(X(u), T^n{\bf e}, \XX)\rightarrow 1 \quad \text{as } n\rightarrow \infty.
\ee
a.s on the event $A_{n_0,l}$.
\end{Lem}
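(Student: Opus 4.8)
The plan is to show that the normalized sum $S_n := Z_{n+p}(\XX)^{-1} \sum_{|u|=n} m_p(X(u), T^n{\bf e}, \XX)$ converges to $1$ by writing the numerator of $Z_{n+p}(\XX)/Z_{n+p}(\XX)$ as a conditional mean plus a fluctuation term, and controlling the fluctuation with Lemma \ref{LemmLGN}. First I would condition on $\mathcal{F}_n$, the $\sigma$-field generated by the population up to generation $n$. Given $\mathcal{F}_n$, each individual $u$ with $|u|=n$ founds an independent subtree; by the branching property and the definition of $m_p$ as a mean measure,
$$\E_{\bf e}\big(Z_{n+p}(\XX) \,\big|\, \mathcal{F}_n\big) = \sum_{|u|=n} m_p(X(u), T^n{\bf e}, \XX).$$
Thus the numerator of $S_n$ is exactly this conditional expectation, and the statement \eqref{cvmoy} is the assertion that $Z_{n+p}(\XX)$ is asymptotically equal to its $\mathcal{F}_n$-conditional mean, on the geometric-growth event $A_{n_0,l}$.

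The key step is then to apply Lemma \ref{LemmLGN} with the right identification. Write
$$Z_{n+p}(\XX) - \sum_{|u|=n} m_p(X(u),T^n{\bf e},\XX) = \sum_{|u|=n} \Big( Z_{n+p}^{(u)}(\XX) - m_p(X(u),T^n{\bf e},\XX) \Big) =: \sum_{i=1}^{N_n} X_{n,i},$$
where $Z_{n+p}^{(u)}(\XX)$ is the number of generation-$(n+p)$ descendants of $u$, $N_n = Z_n(\XX)$, and $X_{n,i}$ is the $i$-th term, which is $\mathcal{F}_n$-centered and, conditionally on $\mathcal{F}_n$, the family $\{X_{n,i}\}_i$ is independent (distinct generation-$n$ individuals spawn independent subtrees). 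To invoke Lemma \ref{LemmLGN} I need a uniform tail bound $\sup_{i,n}\P(|X_{n,i}|>t \mid \mathcal{F}_n)\le \bar Q(t,\infty)$ for some measure $\bar Q$ with finite first moment; this is where the stochastic domination hypothesis \eqref{domsto} enters — the number of generation-$(n+p)$ descendants of a single individual is dominated, uniformly in the starting trait and environment, by the size of generation $p$ of a Galton--Watson-type process with offspring tails bounded by $Q$, which has a $p$-fold-convolution-type tail that is still integrable (and $m_p(\cdot,\cdot,\XX)$ is correspondingly bounded). Feeding this into Lemma \ref{LemmLGN} gives $N_n^{-1}\sum_{i=1}^{N_n} X_{n,i}\to 0$ a.s. on $A_{n_0,l}$, i.e. $Z_{n+p}(\XX)/Z_n(\XX) - Z_n(\XX)^{-1}\sum_{|u|=n} m_p(X(u),T^n{\bf e},\XX)\to 0$.

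Finally I would divide through: on $A_{n_0,l}$ we have $Z_{n+p}(\XX)/Z_n(\XX)\ge l^p>1$ for $n\ge n_0$, so $Z_n(\XX)/Z_{n+p}(\XX)$ is bounded, and therefore
$$S_n - 1 = \frac{Z_n(\XX)}{Z_{n+p}(\XX)}\left( \frac{1}{Z_n(\XX)}\sum_{|u|=n} m_p(X(u),T^n{\bf e},\XX) - \frac{Z_{n+p}(\XX)}{Z_n(\XX)} \right) \longrightarrow 0$$
a.s. on $A_{n_0,l}$, which is the claim. The main obstacle I anticipate is verifying the uniform conditional tail bound on $X_{n,i}$: one must check that the domination \eqref{domsto}, which controls a single reproduction event uniformly in trait and environment, propagates through $p$ generations to a uniform-in-$(x,{\bf e})$ tail bound on $Z_p^{(u)}(\XX)$ (equivalently that $\sup_{x,{\bf e}} m_p(x,{\bf e},\XX)<\infty$), so that the hypotheses of Lemma \ref{LemmLGN} genuinely hold; everything else is bookkeeping around the branching property and the conditional-expectation identity.
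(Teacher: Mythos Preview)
Your proposal is correct and follows essentially the same route as the paper: the branching decomposition $Z_{n+p}(\XX)=\sum_{|u|=n}Z^{(u)}_p(\XX)$, the identification of the centered conditionally independent summands $X_{n,i}=Z^{(u)}_p(\XX)-m_p(X(u),T^n{\bf e},\XX)$, the application of Lemma~\ref{LemmLGN} via the domination~\eqref{domsto} propagated over $p$ generations, and the final division using $Z_n(\XX)/Z_{n+p}(\XX)\le 1$ on $A_{n_0,l}$ are exactly the paper's steps. Your explicit flagging of the $p$-step tail propagation is apt; the paper handles it in one line by invoking~\eqref{domsto} to produce a dominating measure $Q'$ with finite mean.
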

\begin{proof}
The branching property gives a natural decomposition of the population in generation $n+p$, as already used in \cite{AKh2} :
$$Z_{n+p}(\XX)= \sum_{\vert u \vert=n } Z^{(u)}_p(\XX),$$
where $Z^{(u)}$ is the branching Markov chain whose root is the individual $u$ and whose environment is $T^n{\bf e}$.
First, we check that 
  Indeed,
$$Z_{n+p}(\XX)-\sum_{\vert u \vert = n } m_p(X(u),T^n{\bf e}, \XX)= \sum_{\vert u \vert=n} \left[Z^{(u)}_p(\XX)-m_p(X(u),T^n{\bf e},\XX) \right]=Z_n(\XX)\epsilon_{n,p},$$
where 
$$\epsilon_{n,p}:=\frac{1}{Z_{n}(\XX)} \sum_{\vert u\vert=n } X_{p,u}^{(n)}, \qquad X_{p,u}^{(n)}:= Z^{(u)}_p(\XX)-m_p(X(u),T^n{\bf e}, \XX).$$
We note that $(X_{p,u}^{(n)} : \vert u\vert =n)$ are independent conditionally on $\mathcal F_n=\sigma (X(v) : \vert v\vert \leq n)$,  
 $\E(X_{p,u}^{(n)})=0$ and $\vert X_{p,u}^{(n)}\vert \leq  \vert Z^{(u)}_p(\XX)\vert +m_p(X(u),T^n{\bf e}, \XX)$, so that
the stochastic domination assumption (\ref{domsto}) ensures that there exists a measure with finite first moment $Q'$ such that
$$\sup_{u\in \T } \P_{{\bf e}}( \vert X_{p,u}^{(n)} \vert >t \vert  \mathcal{F}_{\vert u\vert})\leq Q'(t,\infty),$$
where we recall that $\T$ is the set of all individuals. 
We can then apply the law of large number of  Lemma \ref{LemLGN} to get that for every $p\geq 0$,
$\epsilon_{n,p}\rightarrow 0$  $\P_{{\bf e}} \text{ a.s.}$ on the event $A_{n_0,l}$, as  $n\rightarrow\infty$. Recalling that $Z_{n+p}(\XX)\geq Z_n(\XX)$ for $n$ large enough, we 
 obtain (\ref{cvmoy}). 
\end{proof}

We can now prove the Theorem.

\begin{proof}[Proof of Theorem \ref{brchthm}]
Using again the branching decomposition,   \bea
 \frac{f_{n+p}.Z_{n+p}(f)}{ Z_{n+p}(\XX)}-\mu_{n+p}(f)  
&=&  \frac{1}{Z_n(\XX)} \sum_{\vert u\vert =n} \frac{Z_n(\XX)}{Z_{n+p}(\XX)} f_{n+p}.Z^{(u)}_p(f)-\mu_{n+p}(f) 
  \quad \text{a.s.} \label{toget}
\eea
and we are proving that the right hand side goes  to $0$ as $n\rightarrow \infty$ on $A_{n_0,l}$, for each $n_0\geq 0, l>1$. For that purpose, we split this expression
 and use the many-to-one formula (Lemma \ref{manytoone})
\Bea
&&\left\vert  \frac{1}{Z_n(\XX)} \sum_{ \vert u \vert =n } \frac{Z_n(\XX)}{f_{n+p}.Z_{n+p}(\XX)} Z^{(u)}_p(f)-\mu_{n+p}(f) \right\vert  \\
&& \qquad \leq \frac{1}{Z_n(\XX)} \left\vert\sum_{ \vert u \vert =n } \frac{Z_n(\XX)}{Z_{n+p}(\XX)} \left[ f_{n+p}.Z^{(u)}_p(f)-m_p(X(u), T{\bf e},f\circ f_{n+p}) \right]\right\vert \\
&&\qquad \qquad +\frac{1}{Z_{n+p}(\XX)} \left\vert \sum_{ \vert u \vert =n } 
m_p(X(u), T^n{\bf e},\XX) [Q_{p}(X(u), T^n{\bf e},f)- \mu_{n+p}(f)] \right\vert \\
&& \qquad  \qquad +\mu_{n+p}(f) \left\vert \sum_{ \vert u \vert =n } \frac{m_p(X(u), T^n{\bf e},\XX)}{Z_{n+p}(\XX)}-1\right\vert.
\Eea
To prove that the first term of this sum goes to zero a.s. on   the event $A_{n_0,l}$ as $n\rightarrow \infty$, we use 
again the law of large numbers of   Lemma \ref{LemLGN}  with now
$$X_{u,n}=\frac{Z_n(\XX)}{f_{n+p}.Z_{n+p}(\XX)}\left[Z^{(u)}_p(f)-m_p(X(u), T{\bf e},f\circ f_{n+p})\right].$$
We note  that 
$$X_{u,n}\leq  f_{n+p}.Z^{(u)}_p(f)+m_p(X(u), T^n{\bf e},f\circ  f_{n+p})\leq M[ Z_p^{(u)}(\XX)+ m_p(X(u), T^n{\bf e}, \XX)],$$ where $M:=\sup_{f \in \mathcal F} 
\parallel f\parallel_{\infty}$. 
Then, 
noting 
$$M_p:=\sup_{n\in \mathbb \N} \left\vert \frac{1}{Z_n(\XX)} \sum_{\vert u\vert =n} \frac{Z_n(\XX)}{Z_{n+p}(\XX)} Z^{(u)}_p(f)-\mu_{n+p}(f) \right\vert
 $$
and recalling (\ref{cvmoy}), we get
\Bea
&&\limsup_{n\rightarrow \infty} \left\vert \frac{1}{Z_n(\XX)} \sum_{u \in \G_n} \frac{Z_n(\XX)}{Z_{n+p}(\XX)} Z^{(u)}_p(f)-\mu_{n+p}(f) \right\vert   \\
&& \qquad \qquad \leq M_p \limsup_{n\rightarrow \infty}  \frac{1}{Z_n(\XX)} \sum_{\vert u\vert=n} \frac{m_p(X(u), T^n{\bf e},\XX)}{Z_{n+p}(\XX)} \\
&&\qquad \qquad  \qquad+M \limsup_{n\rightarrow \infty} \left\vert \sum_{\vert u\vert =n} \frac{m_p(X(u), T^n{\bf e},\XX)}{Z_{n+p}(\XX)}-1\right\vert  \leq M(p).
\Eea
Using now  (\ref{strerg}), we have  $M_p\rightarrow 0$ as $p\rightarrow \infty$, so that $(\ref{toget})$ yields
$$\limsup_{p\rightarrow\infty} \limsup_{n\rightarrow \infty} \left\vert \frac{Z_{n+p}(f)}{ f_{n+p}.Z_{n+p}(\XX)}-\mu_{n+p}(f) \right\vert =0.$$
It ends up the proof.
\end{proof}

\subsection{$L^2$ convergence}
In this section, we state weak and strong law of large numbers by combining $L^2$ computations, the   ergodicity of the auxiliary Markov chain $Y$
and the position of the most recent common ancestor of the individuals. \\
We recall the notations $Q(\lambda,{\bf e}, f)(x)=\int_{\XX^2} \lambda(dx)Q(x,{\bf e},dy) f(y)$ and $\mathcal B(\XX)$ for the set of measurable functions from $\XX$ to $\R$. We note  $\mathcal B_b(\XX)$  the set of measurable functions from $\XX$ 
to $\R$, which are bounded by a same constant $b\geq 0$.

The main  assumption we are using concern the ergodic behavior of the time non-homogeneous auxiliary Markov chain $Y$ associated with the transitions kernels $Q_{i,n}$.

\begin{Ass}
\label{ergo}  Let ${\bf e}_n\in E$, $\mathcal F \subset \mathcal{B}(\XX)$, $f_n \in \mathcal{B}(\XX)$ and $\mu_n \in \mathcal{M}_1(\XX)$ for each $n\in \N$.
 
 (a) For all $\lambda \in \mathcal{M}(\XX)$ and $i\in \N$,
 \be
 \sup_{f \in \mathcal F} \big\vert  Q_{i,n}(\lambda, {\bf e}_n, f\circ f_n)-\mu_n(f) \big\vert \stackrel{n\rightarrow \infty}{\longrightarrow} 0. \nonumber 
\ee

(b) For every $k_n \leq n$ such that $n-k_n\rightarrow \infty$,
\be
 \sup_{\lambda\in \mathcal{M}(\XX), f \in \mathcal F} \big\vert  Q_{k_n,n}(\lambda, {\bf e}_n, f\circ f_n)-\mu_n(f)\big\vert \stackrel{n\rightarrow \infty}{\longrightarrow} 0. \nonumber
\ee
\end{Ass}
\noindent The second assumption (uniform ergodicity) clearly implies the first one.  Sufficient conditions  will be given in the applications. In particular, they are linked to Harris ergodic theorem and more specifically they will be formulated in terms of Doeblin and Lyapounov type  conditions. The function $f_n$ is bound to make the process ergodic if it is not originally. We have for example in mind the case when the auxiliary Markov chain $X_n$ satisfies a central limit theorem, i.e.  $f_n(x)= (x-a_n)/b_n$ and $f(X_n)$ converges to the same distribution whatever the initial value $X_0$ is. Such convergence hold for example for branching random walks. \\

We consider now the genealogy of the population and the time of the most recent common ancestor of two individuals chosen uniformly.
\begin{Ass} \label{sep}
(a) For every $\epsilon>0$, there exists $K \in \N$,  such that for $n$ large enough,
\be
\label{first}
\frac{\E_{{\bf e}_n, \delta _x}(\#\{ u,v : \vert u \vert=\vert v\vert=n, \  u \wedge v \geq  K\})}{m_n(x,{\bf e}_n, \XX)^2} \leq \epsilon.
\ee
Moreover  there exists $C_i \in \mathcal B( \XX^2)$ such that for all  $i\in \N, x,y \in \XX$,
 $$\sup_{n\geq i} \frac{m_{n-i}(y,T^i{\bf e}_n, \XX)}{m_n(x,{\bf e}_n, \XX)} \leq C_i(x,y), \quad \text{with } \E\left( \max\{ C_i(x,X(w)) ^2: \vert w \vert= i+1\}\right) <\infty.$$
(b) For every $K \in \N$,  
\be
\label{second}
\frac{\E_{{\bf e}_n, \delta _x}(\#\{u,v : \vert u \vert=\vert v\vert=n, \ u \wedge v \geq n- K\})}{m_n(x,{\bf e}_n,\XX)^2} \stackrel{n\rightarrow\infty}{\longrightarrow} 0.
\ee
Moreover,
$$\sup_{n\in\N} \E(Z_n(\XX)^2)/m_n(x,{\bf e}_n, \XX)^2 <\infty.$$
\end{Ass}
\noindent These expressions can be rewritten in terms of normalized variance of $Z_n(\XX)$ and more tractable sufficient assumptions can be specified, see the applications. We  also observe  that these assumptions require that $Z_n(\XX)$  has a finite second moment, so  each reproduction law involved in the dynamic has a finite second moment. Moreover  
$m_n(x,{\bf e}_n,\XX)$ has to go to $\infty$.\\
The assumption (\ref{first}) says that the common ancestor is at the beginning of the genealogy. It is the case for Galton-Watson trees, branching processes in random environment and many others ``regular trees''.
The  assumption (\ref{second}) says that  the common ancestor is not at the end  of the genealogy, so   it is weaker. For a simple example where (\ref{first}) is fulfilled but (\ref{second}) is not, one can consider the tree $T_n$ which is composed by a single individual until generation $n-k_n$ and equal to the binary tree between the generations
$n-k_n$ and $n$, with $k_n\rightarrow \infty$.  One can also construct   examples of branching Markov chain with time homogeneous reproduction. As an hint,  we mention the degenerated 
case when the tree is formed by a spine where in each generation, the individual of the spine
has one child outside the spine which gives exactly one child in each generation. More generally, such genealogy may arise   by considering increasing Markov chains 
and increasing mean reproduction (which may be deterministic) with respect to $x\in \XX$.   

\begin{Thm}[Weak LLN] \label{LGNL2} Let ${\bf e}_n \in E^n$, $x\in \XX$, $f_n : \XX \rightarrow \XX$ and $\F \subset \mathcal{B}_b(\XX)$.  

 We assume either that Assumptions 1(a) and   2(a)  hold or that Assumptions 1(b) and 2(b) hold. Then, uniformly  for $f\in \F$,
\be
\label{cvf} 
\frac{f_n.Z_n(f) - \mu_n(f)Z_n(\XX)}{m_n(x,{\bf e}_n, \XX)}\stackrel{n\rightarrow\infty}{\longrightarrow} 0
 \ee
in $L^2_{{\bf e}_n, \delta_x}$ and for all $\epsilon,\eta>0$,
  $$ \P_{{\bf e}_n, \delta_x}\left( \left\vert\frac{f_n.Z_n(f)}{Z_n(\XX)} - \mu_n(f)\right\vert \geq \eta \ ; \ Z_n(\XX)/m_n(x,{\bf e}_n,\XX) \geq \epsilon \right)\stackrel{n\rightarrow\infty}{\longrightarrow} 0.$$
 \end{Thm}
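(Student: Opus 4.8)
We abbreviate $m_n:=m_n(x,{\bf e}_n,\XX)$, which tends to $\infty$. Fix $f\in\F$ and set $g(y):=f(f_n(y))-\mu_n(f)$ (it depends on $n$ and $f$), so that $f_n.Z_n(f)-\mu_n(f)Z_n(\XX)=\sum_{|u|=n}g(X(u))$ with $\|g\|_\infty\le 2b$. The plan is to bound the second moment of this sum and then invoke Chebyshev. Expanding the square and grouping the ordered pairs $(u,v)$ according to the generation $j=|u\wedge v|$ of their most recent common ancestor,
\[
\E_{{\bf e}_n,\delta_x}\Big(\big(f_n.Z_n(f)-\mu_n(f)Z_n(\XX)\big)^2\Big)=m_n\,Q_{0,n}(x,{\bf e}_n,g^2)+\sum_{j=0}^{n-1}T_j^{(n)},
\]
where the diagonal contribution $\E_{{\bf e}_n,\delta_x}(\sum_{|u|=n}g(X(u))^2)=m_n\,Q_{0,n}(x,{\bf e}_n,g^2)$ is the second part of Lemma \ref{manytoone}, and $T_j^{(n)}:=\E_{{\bf e}_n,\delta_x}\big(\sum_{u\ne v,\,|u|=|v|=n,\,|u\wedge v|=j}g(X(u))g(X(v))\big)$. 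Since $\|g^2\|_\infty\le 4b^2$ and $Q_{0,n}$ is Markov, the diagonal term is $\le 4b^2m_n=o(m_n^2)$; so everything reduces to the pair terms.

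To treat $T_j^{(n)}$ I would use a many-to-two argument: an ordered pair with $|u\wedge v|=j$ consists of generation-$n$ descendants of two distinct children $wi,wi'$ of some $w$ with $|w|=j$, and by the branching property the two sub-populations are conditionally independent given $\mathcal F_{j+1}$, each with first moment given by Lemma \ref{manytoone}. Identifying the shifted semigroup $Q_{0,n-j-1}(\cdot,T^{j+1}{\bf e}_n,\cdot)$ with $Q_{j+1,n}(\cdot,{\bf e}_n,\cdot)$ (immediate from the definition of $Q_{i,n}$), this yields
\begin{align*}
T_j^{(n)}&=\E_{{\bf e}_n,\delta_x}\Big(\sum_{|w|=j}\sum_{i\ne i'}\psi_{j,n}(X(wi))\,\psi_{j,n}(X(wi'))\Big),\\
\psi_{j,n}(z)&:=m_{n-j-1}(z,T^{j+1}{\bf e}_n,\XX)\,\big[\,Q_{j+1,n}(\delta_z,{\bf e}_n,f\circ f_n)-\mu_n(f)\,\big].
\end{align*}
With $\varepsilon_{i,n}:=\sup_{\lambda\in\MM_1(\XX),\,f\in\F}|Q_{i,n}(\lambda,{\bf e}_n,f\circ f_n)-\mu_n(f)|$ one has $|\psi_{j,n}(z)|\le\varepsilon_{j+1,n}\,m_{n-j-1}(z,T^{j+1}{\bf e}_n,\XX)$, and conditioning once more on $\mathcal F_{j+1}$ shows that $\E_{{\bf e}_n,\delta_x}\big(\sum_{|w|=j}\sum_{i\ne i'}m_{n-j-1}(X(wi),T^{j+1}{\bf e}_n,\XX)\,m_{n-j-1}(X(wi'),T^{j+1}{\bf e}_n,\XX)\big)$ is exactly $\E_{{\bf e}_n,\delta_x}(\#\{u\ne v:|u|=|v|=n,\ |u\wedge v|=j\})$. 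Hence $|T_j^{(n)}|\le\varepsilon_{j+1,n}^2\,\E_{{\bf e}_n,\delta_x}(\#\{u\ne v:|u|=|v|=n,\ |u\wedge v|=j\})$.

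It then remains to sum over $j$, splitting at a well-chosen level. Under Assumptions \ref{ergo}(a) and \ref{sep}(a): for each fixed $j<K$, the domination $m_{n-j-1}(y,T^{j+1}{\bf e}_n,\XX)\le C_{j+1}(x,y)\,m_n$ together with the integrability of $\max_w C_{j+1}(x,X(w))^2$ and the finiteness of second moments (implicit in Assumption \ref{sep}) bounds $\E_{{\bf e}_n,\delta_x}(\#\{u\ne v:|u\wedge v|=j\})/m_n^2$ by a constant $c_j$ independent of $n$, while $\max_{1\le i\le K}\varepsilon_{i,n}\to0$ by Assumption \ref{ergo}(a); so $\sum_{j<K}|T_j^{(n)}|/m_n^2\le\sum_{j<K}c_j\,\varepsilon_{j+1,n}^2\to0$, whereas $\sum_{j\ge K}|T_j^{(n)}|/m_n^2\le 4b^2\,\E_{{\bf e}_n,\delta_x}(\#\{u,v:u\wedge v\ge K\})/m_n^2\le 4b^2\epsilon$ for $n$ large by (\ref{first}); letting $\epsilon\downarrow0$ after $n\to\infty$ gives the $L^2$ convergence. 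Under Assumptions \ref{ergo}(b) and \ref{sep}(b) one argues dually: put $\bar\varepsilon_n(m):=\sup_{i\le n-m}\varepsilon_{i,n}$, so that Assumption \ref{ergo}(b) is precisely $\lim_{m\to\infty}\limsup_{n\to\infty}\bar\varepsilon_n(m)=0$; for $j\le n-m-1$ one has $\varepsilon_{j+1,n}\le\bar\varepsilon_n(m)$, whence $\sum_{j\le n-m-1}|T_j^{(n)}|/m_n^2\le\bar\varepsilon_n(m)^2\,\sup_n\E(Z_n(\XX)^2)/m_n^2$, and for $n-m\le j\le n-1$ the crude bound $\|g\|_\infty\le2b$ gives $\sum|T_j^{(n)}|/m_n^2\le 4b^2\,\E_{{\bf e}_n,\delta_x}(\#\{u,v:u\wedge v\ge n-m\})/m_n^2\to0$ by (\ref{second}); sending $n\to\infty$ then $m\to\infty$ concludes. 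All bounds are uniform over $f\in\F$ because $\|g\|_\infty\le2b$ for every $f$, the pair-count expectations do not involve $f$, and $\varepsilon_{i,n}$ already carries a supremum over $\F$.

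The convergence in probability then follows from the $L^2$ bound: on $\{Z_n(\XX)\ge\epsilon\,m_n\}$ the event $\{|f_n.Z_n(f)/Z_n(\XX)-\mu_n(f)|\ge\eta\}$ forces $|f_n.Z_n(f)-\mu_n(f)Z_n(\XX)|\ge\eta\epsilon\,m_n$, so by Chebyshev the probability is at most $(\eta\epsilon)^{-2}m_n^{-2}\,\E_{{\bf e}_n,\delta_x}((f_n.Z_n(f)-\mu_n(f)Z_n(\XX))^2)\to0$. The step I expect to be the main obstacle is the many-to-two reduction together with the control of the early-branching pairs in the non-uniform case (Assumption \ref{sep}(a)): one must cleanly decouple the pair correlation into a product of two lineage expectations, match the two shifted semigroups $Q_{j+1,n}$ appearing on the branches to the ergodicity hypothesis, and --- most delicately --- bound $\E_{{\bf e}_n,\delta_x}(\#\{u\ne v:|u\wedge v|=j\})/m_n^2$ uniformly in $n$ for each fixed $j$ using only the domination functions $C_i$ and second-moment estimates; the factor $\varepsilon_{j+1,n}^2\to0$ then takes care of the rest.
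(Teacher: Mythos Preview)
Your decomposition (expand the square, group pairs by $|u\wedge v|$, many-to-two, then split the sum) is exactly the paper's, and your treatment under Assumptions~\ref{ergo}(b)/\ref{sep}(b) is correct and essentially identical to the paper's. The issue is the case of Assumptions~\ref{ergo}(a)/\ref{sep}(a).

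You define $\varepsilon_{i,n}:=\sup_{\lambda\in\MM_1(\XX),\,f\in\F}|Q_{i,n}(\lambda,{\bf e}_n,f\circ f_n)-\mu_n(f)|$ and then write ``$\max_{1\le i\le K}\varepsilon_{i,n}\to0$ by Assumption~\ref{ergo}(a)''. But Assumption~\ref{ergo}(a) only asserts that $\sup_{f\in\F}|Q_{i,n}(\lambda,{\bf e}_n,f\circ f_n)-\mu_n(f)|\to0$ for each \emph{fixed} $\lambda$; it says nothing about the supremum over $\lambda$. (The paper even remarks that (b) $\Rightarrow$ (a), so (a) is strictly weaker in general.) Consequently your bound $|T_j^{(n)}|\le\varepsilon_{j+1,n}^2\,\E(\#\{u\ne v:|u\wedge v|=j\})$, while true, is useless here because you cannot show $\varepsilon_{j+1,n}\to0$. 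Your closing sentence locates the ``main obstacle'' in bounding $\E(\#\{u\ne v:|u\wedge v|=j\})/m_n^2$ and says ``the factor $\varepsilon_{j+1,n}^2\to0$ then takes care of the rest''---that is precisely the step that fails.

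The fix, which is what the paper does, is to \emph{not} pull the ergodic factor outside the expectation. Keep
\[
\frac{T_j^{(n)}}{m_n^2}=\E_{{\bf e}_n,\delta_x}\Bigg(\sum_{|w|=j}\sum_{a\ne b}F_{j,n}(wa)\,F_{j,n}(wb)\,\frac{m_{n-j-1}(X(wa),T^{j+1}{\bf e}_n,\XX)}{m_n}\,\frac{m_{n-j-1}(X(wb),T^{j+1}{\bf e}_n,\XX)}{m_n}\Bigg),
\]
with $F_{j,n}(wa):=Q_{j+1,n}(\delta_{X(wa)},{\bf e}_n,f\circ f_n)-\mu_n(f)$. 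For each fixed $j<K$ the tree up to generation $j+1$ is a fixed random object; Assumption~\ref{ergo}(a) with $\lambda=\delta_{X(wa)}$ gives $F_{j,n}(wa)\to0$ a.s.\ (uniformly in $f\in\F$), while the second half of Assumption~\ref{sep}(a) provides the pointwise domination
\[
|F_{j,n}(wa)|\,|F_{j,n}(wb)|\,\frac{m_{n-j-1}(X(wa),\ldots)}{m_n}\,\frac{m_{n-j-1}(X(wb),\ldots)}{m_n}\le b^2\,C_{j+1}(x,X(wa))\,C_{j+1}(x,X(wb)),
\]
with an integrable right-hand side. Dominated convergence then gives $T_j^{(n)}/m_n^2\to0$ for each fixed $j$, and you sum over $j<K$; the tail $j\ge K$ is handled by (\ref{first}) exactly as you wrote. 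So the role of $C_i$ is not to bound the pair count by a constant $c_j$ and then multiply by $\varepsilon_{j+1,n}^2$; it is to serve as the dominating function that lets you pass the \emph{pointwise} ergodic limit through the expectation.
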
 
\noindent We first note that $f_n.Z_n(\ind(A))/ Z_n(\XX)$ is the proportion of individuals in generation $n$  whose trait  belongs to $f_n^{-1}(A)$. The assumptions require either weak ergodicity and
early separation of lineages or strong ergodicity and non-late separation of lineages.\\
We refer to the next section for various applications, in particular we recover in Section \ref{WLLNT} the classical weak law of large numbers for Markov chains along  Galton-Watson trees \cite{DelMar} and   
  branching processes in random environment \cite{BanHua}. \\ 
We also mention that the Theorem  holds also if  $f_n : \XX \rightarrow \XX'$ and can be extended to unbounded $f$ with domination assumptions following \cite{guyon}. Finally, let us mention that the a.s. convergence may fail in the theorem above, even in the field of applications we can have in mind. One can think for example of   an underlying genealogical tree growing very slowly and each individual is attached
   with  i.i.d. random variable, as appears e.g. in tree indexed random walks.\\

\begin{proof} Let us prove the first part of the Theorem under Assumptions 1(a) and  2(a). In the whole proof, 
$x$ is fixed and we omit $\delta_x$ in the notation of the probability and of the expectation. For convenience, we also
write $m(x,{\bf e}_n):= m(x,{\bf e}_n,\XX)$ and  denote
$$g_n(x):=f(f_n(x)) -\mu_n(f).$$
Let us compute for $K\geq 1$, 
\Bea
&&\E_{{\bf e}_n}\left(Z_n(g_n)^2\right)\\
&&\quad = \E_{{\bf e}_n}\left(\sum_{\vert u\vert=\vert v\vert=n } g_n(X(u))g_n(X(v)\right) \\
&&\quad = \E_{{\bf e}_n}\left(\sum_{\substack{\vert u\vert=\vert v\vert=n \\ \vert u\wedge v\vert < K}} g_n(X(u))g_n(X(v))\right)+ 
\E_{{\bf e}_n}\left(\sum_{\substack{\vert u\vert=\vert v\vert=n \\ \vert u\wedge v\vert \geq  K}} g_n(X(u))g_n(X(v))\right)
\Eea
The second term of the right hand side is smaller than
$$2\parallel f\parallel_{\infty}^2\E(\#\{\vert u\vert=\vert v\vert=n : \vert u\wedge v\vert > K\})\leq 2b^2m(x,{\bf e}_n)^2.\epsilon_{K,n},$$
where $\limsup_{n\rightarrow \infty}\epsilon_{K,n}\rightarrow 0$ as $K\rightarrow \infty$ using the first part  of Assumption 2(a). So we just deal with the first term and consider $i=1,\ldots, K$. Thanks  to the branching property,
\Bea
&&\E_{{\bf e}_n}\left(\sum_{\substack{\vert u\vert=\vert v\vert=n \\ \vert u\wedge v\vert =i-1}} g_n(X(u))g_n(X(v))\right) \\
&&\qquad = \E_{{\bf e}_n}\left(\sum_{\substack{\vert w \vert = i-1 \\ \vert wa\vert=\vert wb \vert =i}} \sum_{\substack{\vert u\vert =n  \\ u\geq wa}}\sum_{\substack{\vert v \vert=n  \\ 
v\geq wb}} g_n(X(u))g_n(X(v))\right)\\ 
&&\qquad = \E_{{\bf e}_n}\left(\sum_{\substack{\vert w \vert=i-1\\ \vert wa\vert=\vert wb \vert = i}} R_{i,n}(X(wa))R_{i,n}(X(wb))\right),
\Eea
where the many-to-one formula of Lemma \ref{manytoone} allows us to write
\be
\label{defR}
R_{i,n}(x):=\E_{T^i{\bf e}_{n},\delta_ x}\left(\sum_{\vert u\vert= n-i}g_n(X(u))\right)= m_{n-i}(x,T^i{\bf e}_{n}) Q_{n-i}(x,T^i{\bf e}_{n},g_n).
\ee
Then Assumption 1(a)  ensures that 
$$F_{i,n}(u):=R_{i,n}(X(u))/m_{n-i}(X(u),T^i{\bf e}_n)$$
 goes to $0$ a.s. for each $i\in \N, \vert u \vert=i$ uniformly for $f\in \mathcal F$. We also   note that this quantity is  bounded by $b$. Then,
\Bea
&&m(x,{\bf e}_n,\XX)^{-2}\E_{{\bf e}_n}\left(\sum_{\substack{\vert u\vert=\vert v\vert=n \\ \vert u\wedge v\vert \leq K}} g_n(X(u))g_n(X(v))\right) \\
&&   \quad =\E_{{\bf e}_n}\left(\sum_{\substack{i \leq K, \vert w\vert = i-1  \\ \vert wa\vert=\vert wb \vert= i}} F_{i,n}(wa)F_{i,n}(wb) \frac{m_{n-i}(X(wa),T^i{\bf e}_n)m_{n-i}(X(wb),T^i{\bf e}_n,\XX)}{m_{n}(x,{\bf e}_n)^2}\right). 
\Eea
Adding that 
\Bea
&&F_{i,n}(wa)F_{i,n}(wb) \frac{m_{n-i}(X(wa),T^i{\bf e}_n)m_{n-i}(X(wb),T^i{\bf e}_n)}{m_{n}(x,{\bf e}_n)^2}\\
&& \qquad \qquad \qquad \qquad \leq b^2 \sup_n \frac{m_{n-i}(X(wa),T^i{\bf e}_n)}{m_n(x,{\bf e}_n)} .\sup_n \frac{m_{n-i}(X(wb),T^i{\bf e}_n)}{m_n(x,{\bf e}_n)},
\Eea
the second part of Assumption 2(a) 
ensures the $L^2_{{\bf e}_n}$ convergence (\ref{cvf}), uniformly for $f\in \mathcal F$. \\

The proof of (\ref{cvf}) under Assumptions 1(b) and  2(b) is almost the same, replacing $K$ by $n-k_n$  with $k_n\rightarrow \infty$. Indeed, Assumption 1(b) ensures that there exists $k_n\rightarrow \infty$ such that
$$\frac{\E_{{\bf e}_n, \delta _x}\left(\#\{ \vert u\vert=\vert v \vert=n  : u \wedge v > n- k_n\}\right)}{m_n(x,{\bf e}_n)^2} \stackrel{n\rightarrow\infty}{\longrightarrow} 0,$$
whereas 
\Bea
&&\E_{{\bf e}_n}\left(\sum_{\substack{i \leq n-k_n, \vert w\vert = i-1  \\ \vert wa\vert=\vert wb \vert= i}} F_{i,n}(wa)F_{i,n}(wb) \frac{m_{n-i}(X(wa),T^i{\bf e}_n)m_{n-i}(X(wb),T^i{\bf e}_n)}{m_{n}(x,{\bf e}_n)^2}\right) \\
&&\qquad \qquad \qquad \qquad \qquad \qquad \qquad \qquad\leq \left( \sup_{n-i\geq k_n, x\in \XX} F_{i,n}(x)\right)^2 \frac{\E(Z_n(\XX)^2)}{m_{n}(x,{\bf e}_n)^2}.
\Eea
Assumption 1(b) ensures that $\sup_{n-i\geq k_n, x\in \XX} F_{i,n}(x)\rightarrow 0$ as $k_n\rightarrow \infty$ and the second part of Assumption
2(b) ensures that  $\E_{{\bf e}_n}(Z_n(\XX)^2)/m_{n}(x,{\bf e}_n)$ is bounded. The conclusion is thus the same. 
 
The proof of the last part of the Theorem comes simply from
Cauchy-Schwarz inequality :
\Bea
&&\E_{{\bf e}_n}\left(\ind_{Z_n(\XX)/m(x,{\bf e}_n) \geq \epsilon} \left[\frac{f_n.Z_n(f)}{Z_n(\XX)} - \mu_n(f)\right]\right)^2 \\
&&\leq \E_{{\bf e}_n} \left( \frac{m_n(x,{\bf e}_n)^2}{Z_n(\XX)^2}1_{Z_n(\XX)/m_n(x,{\bf e}_n) \geq \epsilon}\right)\E_{{\bf e}_n} \left(\left[\frac{f_n.Z_n(f)-Z_n(\XX)\mu_n(f)}{m_n(x,{\bf e}_n)} \right]^2\right).
\Eea
The first term of the right-hand side is bounded with respect to $n$. So applying the first part of the theorem to the second term  and using Markov inequality ends up the proof.
\end{proof}

We give now a strong law of large numbers. For that purpose, we define 
$$ V_i(x_0,x_1)=\sup_{ k\geq 0 } \frac{m_{k}(x_0, T^{i}{\bf e}, \XX)m_{k}(x_1, T^{i}{\bf e},\XX)}{m_{i+k}(x,{\bf e},\XX)^2}.$$
\begin{Lem} \label{ctrL2}
Let ${\bf e}\in E$,  $x\in \XX$ and assume that
 \be
\label{Hyp1as}
 \sum_{n\geq 0}m_n(x,{\bf e},\XX)^{-1}<\infty; \quad \sum_{i \geq 1} \E_{{\bf e}, \delta_x}\left(  \sum_{ \substack{  \vert w\vert=i-1 \\  
\vert wa\vert=\vert wb\vert= i}}
 V_i(X(wa),X(wb))\right)<\infty,
 \ee
then $Z_n(\XX)/m_n(x,{\bf e},\XX)$ is bounded in $L^2_{{\bf e}, \delta_x}$.
\end{Lem}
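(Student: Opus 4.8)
The plan is to control the second moment of $Z_n(\XX)$ by decomposing the pairs $(u,v)$ of individuals in generation $n$ according to the generation $i-1=\vert u\wedge v\vert$ of their most recent common ancestor, exactly as in the proof of Theorem \ref{LGNL2}. Writing $m_n(x,{\bf e})$ for $m_n(x,{\bf e},\XX)$, I would first use the branching property and the many-to-one formula (Lemma \ref{manytoone}) to obtain
$$\E_{{\bf e},\delta_x}(Z_n(\XX)^2)=m_n(x,{\bf e})+\sum_{i=1}^{n}\E_{{\bf e},\delta_x}\!\left(\sum_{\substack{\vert w\vert=i-1\\ \vert wa\vert=\vert wb\vert=i}} m_{n-i}(X(wa),T^i{\bf e})\,m_{n-i}(X(wb),T^i{\bf e})\right),$$
where the first term accounts for the diagonal $u=v$ (more precisely for $\vert u\wedge v\vert=n$), and for $\vert u\wedge v\vert=i-1<n$ the contribution of the two distinct children $wa,wb$ of $w$ factorizes into two independent subtrees rooted at $X(wa)$ and $X(wb)$, each of which has mean population $m_{n-i}$ by the branching property. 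Dividing by $m_n(x,{\bf e})^2$ and bounding $m_{n-i}(X(wa),T^i{\bf e})m_{n-i}(X(wb),T^i{\bf e})/m_n(x,{\bf e})^2$ by $V_i(X(wa),X(wb))$ (taking $k=n-i$ in the supremum defining $V_i$), I get
$$\frac{\E_{{\bf e},\delta_x}(Z_n(\XX)^2)}{m_n(x,{\bf e})^2}\le \frac{1}{m_n(x,{\bf e})}+\sum_{i=1}^{n}\E_{{\bf e},\delta_x}\!\left(\sum_{\substack{\vert w\vert=i-1\\ \vert wa\vert=\vert wb\vert=i}} V_i(X(wa),X(wb))\right).$$

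Now I would take the supremum over $n$. The first term is bounded since $m_n(x,{\bf e})\to\infty$ — indeed $\sum_n m_n(x,{\bf e})^{-1}<\infty$ forces $m_n(x,{\bf e})^{-1}\to 0$, so $\sup_n m_n(x,{\bf e})^{-1}<\infty$ — and the second term is bounded by the full series $\sum_{i\ge 1}\E_{{\bf e},\delta_x}\big(\sum_{\vert w\vert=i-1,\ \vert wa\vert=\vert wb\vert=i} V_i(X(wa),X(wb))\big)$, which is finite by the second hypothesis in \eqref{Hyp1as}. Hence $\sup_n \E_{{\bf e},\delta_x}(Z_n(\XX)^2)/m_n(x,{\bf e})^2<\infty$, which is the claimed $L^2$ boundedness.

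The main point requiring care — and the only real obstacle — is the bookkeeping that turns the double sum over pairs $(u,v)$ into the sum over common ancestors $w$ together with the independence of the two pending subtrees: one must check that summing over $wa\le u$ and $wb\le v$ with $a\ne b$ and applying the branching property twice gives precisely the product $m_{n-i}(X(wa),T^i{\bf e})\,m_{n-i}(X(wb),T^i{\bf e})$, and that the environment seen from generation $i$ is $T^i{\bf e}$. This is the same manipulation already carried out in the proof of Theorem \ref{LGNL2} (the passage through $R_{i,n}$ there with $g_n\equiv 1$), so I would simply invoke it. Everything else is monotone convergence to justify interchanging $\sup_n$ with the sums, which is legitimate since all terms are non-negative.
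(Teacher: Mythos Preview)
Your proof is correct and follows essentially the same approach as the paper: decompose $\E_{\bf e}(Z_n(\XX)^2)$ according to the generation of the most recent common ancestor, use the branching property to factorize into the product $m_{n-i}(X(wa),T^i{\bf e})\,m_{n-i}(X(wb),T^i{\bf e})$, divide by $m_n(x,{\bf e})^2$, and bound via $V_i$. Your additional remark that $\sum_n m_n(x,{\bf e})^{-1}<\infty$ forces $\sup_n m_n(x,{\bf e})^{-1}<\infty$ makes explicit a point the paper leaves implicit, but otherwise the arguments coincide.
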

The proof of this Lemma is given after the following main result.
 \begin{Thm}[Strong LLN] \label{coras}  
Let ${\bf e}\in E$,  $x\in \XX$ and  $f\in  \mathcal B_b(\XX)$.\\
 Assume that (\ref{Hyp1as}) hold  and that 
 that there exists     a sequence of probability measure $\mu_n$ on $\XX$  such that
\be
\label{Hyp2as}
\sup_{i \in \N, f\in \F} \sum_{ n\geq i}  \sup_{\lambda\in \mathcal M_1} \big\vert Q_{i,n}(\lambda,T^{i}{\bf e}, f\circ f_n)-\mu_n(f) \big\vert ^2 <\infty.
\ee
Then, 
$$\frac{f_n.Z_n(f) - \mu_n(f)Z_n(\XX)}{m_n(x,{\bf e}_n, \XX)}\stackrel{n\rightarrow\infty}{\longrightarrow} 0 \qquad \P_{{\bf e},\delta_x} \text{ a.s.}
$$
\end{Thm}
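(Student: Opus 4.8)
The plan is to establish the $L^2$ boundedness of $Z_n(\XX)/m_n(x,{\bf e},\XX)$ via Lemma \ref{ctrL2}, and then upgrade the weak law of large numbers (Theorem \ref{LGNL2}) to an almost sure statement by a Borel--Cantelli argument, exploiting the summability hypotheses (\ref{Hyp1as}) and (\ref{Hyp2as}) which are precisely the quantitative reinforcements of Assumptions 1 and 2 needed to make the series of $L^2$ errors converge.

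First I would repeat the second-moment computation from the proof of Theorem \ref{LGNL2}, but keeping track of constants rather than just taking limits. Writing $g_n(x) = f(f_n(x)) - \mu_n(f)$ and decomposing $\E_{{\bf e}}(Z_n(g_n)^2)$ according to the generation $|u\wedge v| = i-1$ of the most recent common ancestor, the many-to-one formula (Lemma \ref{manytoone}) gives, for each $i$,
\[
\E_{{\bf e}}\Big(\sum_{\substack{|w|=i-1\\ |wa|=|wb|=i}} R_{i,n}(X(wa)) R_{i,n}(X(wb))\Big),
\qquad R_{i,n}(x) = m_{n-i}(x,T^i{\bf e}) Q_{n-i}(x,T^i{\bf e},g_n).
\]
Bounding $|Q_{n-i}(x,T^i{\bf e},g_n)| \leq \sup_{\lambda} |Q_{i,n}(\lambda,T^i{\bf e},f\circ f_n) - \mu_n(f)| =: \delta_{i,n}$ and $m_{n-i}(X(wa))m_{n-i}(X(wb))/m_n(x,{\bf e})^2 \leq V_i(X(wa),X(wb))$, one gets
\[
\frac{\E_{{\bf e}}(Z_n(g_n)^2)}{m_n(x,{\bf e},\XX)^2}
\ \leq\ \sum_{i=1}^{n} \delta_{i,n}^2\, \E_{{\bf e},\delta_x}\Big(\sum_{\substack{|w|=i-1\\ |wa|=|wb|=i}} V_i(X(wa),X(wb))\Big).
\]
Summing over $n$ and exchanging the order of summation, hypothesis (\ref{Hyp2as}) controls $\sum_{n\geq i}\delta_{i,n}^2$ uniformly in $i$, while the second half of (\ref{Hyp1as}) makes the sum over $i$ of the genealogical expectations finite; hence $\sum_n \E_{{\bf e}}(Z_n(g_n)^2)/m_n(x,{\bf e},\XX)^2 < \infty$. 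In particular this quantity is summable, so by Borel--Cantelli (after Markov's inequality at each fixed threshold and a diagonal/countable-intersection argument over a sequence $\eta\downarrow 0$) we obtain $f_n.Z_n(f)/m_n(x,{\bf e},\XX) - \mu_n(f)Z_n(\XX)/m_n(x,{\bf e},\XX) \to 0$ almost surely, uniformly for $f\in\F$. Finally, Lemma \ref{ctrL2} together with the first half of (\ref{Hyp1as}) gives that $Z_n(\XX)/m_n(x,{\bf e},\XX)$ is $L^2$-bounded; one still needs a lower bound to pass from the "divided by $m_n$" statement to the "divided by $Z_n(\XX)$" statement, but since the divided-by-$m_n$ difference already tends to $0$ a.s., the claimed convergence follows directly without needing $Z_n(\XX)/m_n \not\to 0$.

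For Lemma \ref{ctrL2} itself: I would write $Z_{n+p}(\XX) - \sum_{|u|=n} m_p(X(u),T^n{\bf e},\XX)$ as a conditionally centered sum and use the branching decomposition to get a recursion for $w_n := \E_{{\bf e}}(Z_n(\XX)^2)/m_n(x,{\bf e},\XX)^2$; the cross terms reproduce the $V_i$ sums and the diagonal terms contribute the $m_n^{-1}$ series, so $\sup_n w_n < \infty$ follows from (\ref{Hyp1as}) by a discrete Gronwall-type estimate.

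The main obstacle I expect is bookkeeping in the uniformity: making sure that the Borel--Cantelli step produces a single almost-sure event on which convergence holds simultaneously for all $f \in \F$ and that the exchange of summation over $(i,n)$ is legitimate given that $\delta_{i,n}$ depends on both indices. The passage to uniformity over $\F$ is handled by the $\sup_{f\in\F}$ already built into (\ref{Hyp2as}), and countability issues over $\eta$ are routine, but one must be careful that the constants $C_i$ / $V_i$ bounds have the integrability asserted in Assumption \ref{sep} so that the genealogical expectations are genuinely finite. The measure-theoretic core is otherwise a direct combination of the second-moment method already used for Theorem \ref{LGNL2} with the extra summability.
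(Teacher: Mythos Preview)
Your proposal is correct and follows essentially the same route as the paper: decompose $\E_{\bf e}(Z_n(g_n)^2)$ according to the generation of the most recent common ancestor, use the many-to-one formula to rewrite each piece via $R_{i,n}$, bound by $\delta_{i,n}^2\, V_i(X(wa),X(wb))$, then sum over $n$, interchange the $i$- and $n$-sums, and invoke (\ref{Hyp2as}) and the second half of (\ref{Hyp1as}) respectively. Two small points: your displayed bound drops the diagonal contribution $u=v$, which the paper handles separately as $\leq b\sum_n m_n(x,{\bf e})^{-1}$ via the first half of (\ref{Hyp1as}); and the Borel--Cantelli detour is unnecessary, since $\sum_n \E_{\bf e}\big[(Z_n(g_n)/m_n)^2\big]<\infty$ already gives $\sum_n (Z_n(g_n)/m_n)^2<\infty$ a.s.\ by monotone convergence, hence $Z_n(g_n)/m_n\to 0$ a.s.\ directly.
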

$\newline$
\noindent 
The first assumption is related to the genealogy of the population and the second one is linked to the ergodic property 
of  the auxiliary Markov chain $Y$. Both assumptions are stronger that their counterpart of the previous theorem. \\
We  refer to \cite{guyon} for more general conditions on the functions $f\in\F$ in the fixed environment case, when
the   reproduction law does not depend on the position.\\
 We  note that under the Assumptions of the Theorem,   $Z_n(\XX)/m_n(x,{\bf e},\XX)$ is bounded in $L^2_{\bf e}$ thanks to the previous Lemma. It
entails that the probability of the event $\{Z_n(\XX)/m_n(x,{\bf e},\XX) \geq \epsilon\}$ is positive for $\epsilon$ small enough and every $n\geq 1$.
On this event, we get $f_n.Z_n(f)/Z_n(\XX) - \mu_n(f) \rightarrow  0$ a.s. as $n\rightarrow \infty$.

\begin{proof}[Proof of Lemma \ref{ctrL2}]
We again omit  the initial state  $\delta_x$ in the notations  and write  $m_n(x,{\bf e})$ for $m_n(x,{\bf e},\XX)$.
Using the branching property and distinguishing if the common ancestor of two individuals lives before generation $n$ or in generation $n$, we have
\Bea
\E_{\bf e}(Z_n(\XX)^2)&=&\E_{\bf e}\left(\sum_{\vert u\vert=\vert v\vert =n} 1\right) \\
&=& \E_{\bf e}(Z_n(\XX))+\E_{\bf e}\left(\sum_{i\leq n} \sum_{\substack{ \vert w \vert=i-1  \\ \vert wa\vert=\vert wb\vert=i}} 
\sum_{\substack{\vert u \vert=n  : u>wa \\ \vert v \vert=n :  v>wb}} 1\right) \\
&=& m_n(x,{\bf e})+\sum_{i\leq n}  \E_{\bf e}\left( \sum_{\substack{ \vert w \vert=i-1  \\ \vert wa\vert=\vert wb\vert=i}}
m_{n-i}(X(wa),T^{i}{\bf e})m_{n-i}(X(wb),T^{i}{\bf e})\right).
\Eea
Then, 
$$\frac{\E_{\bf e}(Z_n(\XX)^2)}{ m_n(x,{\bf e})^2}\leq \frac{1}{ m_n(x,{\bf e})}+\sum_{i\leq n}  \E_{\bf e}\left( \sum_{\substack{ \vert w \vert=i -1 \\ \vert wa\vert=\vert wb\vert=i}}
V_i(X(wa),X(wb)\right),$$
which ends up the proof.
\end{proof}

\begin{proof}[Proof of Theorem \ref{coras}]
To get the a.s. convergence, we prove that $$\E_{{\bf e}}\left(\sum_{n \geq 1 } \left[\frac{f_n.Z_n(f) - \mu(f)Z_n(\XX)}{m_n(x,{\bf e})}\right]^2\right)<\infty.$$
For that purpose, we use the notations of the proof of the previous Theorem, in particular 
$$g_{n}(x):=f(f_n(x))-\mu_{n}(f)  $$
 and we are inspired by \cite{guyon}. Using Fubini inversion,   the branching property and (\ref{defR}), we have
\Bea
&& \sum_{n\geq 0} m_n(x,{\bf e})^{-2}\E_{{\bf e}}(Z_n(g_n)^2)\\
&&= \E_{{\bf e}}\left(\sum_{n\in \N} \sum_{\vert u\vert=\vert v \vert=n} m_n(x,{\bf e})^{-2}g_n(X(u))g_n(X(v))\right) \\
&&= \E_{{\bf e}}\left(\sum_{n\in \N} \sum_{i \leq n}\sum_{ \substack{\vert u\vert=\vert v\vert=n \\   \vert u\wedge v\vert =i}} m_n(x,{\bf e})^{-2}g_n(X(u))g_n(X(v))\right)\\
&&= \E_{{\bf e}}\left(\sum_{i\leq n} \sum_{\substack{ \vert w \vert=i-1  \\ \vert wa\vert=\vert wb\vert=i}} 
\sum_{\substack{\vert u \vert=n  : u \geq wa \\ \vert v \vert=n :  v \geq wb}} m_n(x,{\bf e})^{-2} g_n(X(u))g_n(X(v))\right)\\
&&\qquad \quad + \E_{{\bf e}}\left(\sum_{n \in \N, \vert u \vert =n } m_n(x,{\bf e})^{-2}g_n(X(u))^2\right)\\
&& \leq \E_{{\bf e}}\left( \sum_{i\leq  n} \sum_{\substack{ \vert w \vert =i -1 \\ \vert wa\vert=\vert wb\vert= i}}  \frac{m_{n-i}(X(wa), T^{i}{\bf e}) m_{n-i}(X(wb),  T^{i}{\bf e})}{m_n(x,{\bf e})^2} 
R_{i,n}(X(wa))R_{i,n}(X(wb))\right) \\
&&\qquad \quad + 2\parallel g_n\parallel_{\infty} \E_{{\bf e}}\left(\sum_{n \in \N}   m_n(x,{\bf e})^{-2}Z_n(\XX)  \right) \\
&&\leq \E_{{\bf e}}\left(\sum_{\substack{i \in \N , \vert w \vert=i-1 \\ \vert wa\vert=\vert wb \vert= i}} V_i(X(wa),X(wb))H_i\right) + b\sum_{n \in \N} m_n(x,{\bf e})^{-1} ,
\Eea
where $b:=(2\parallel f \parallel_{\infty})^2$ and  $$ H_i= \sup_{y,z} \sum_{n\geq i} R_{i,n}(y)R_{i,n}(z), \qquad 
 V_i(x_0,x_1)=\sup_{ n \geq i} \frac{m_{n-i}(x_0, T^{i}{\bf e})m_{n-i}(x_1, T^{i}{\bf e})}{m_{n}(x,{\bf e})^2}.$$
Then, the assumptions ensure that
\Bea
&& \sum_{n\geq 0} m_n(x,{\bf e},\XX)^{-2}\E_{{\bf e}}(Z_n(g_n)^2)\leq b\sum_{n\geq 0}m_n(x,{\bf e})^{-1}\\
&& \qquad \qquad  \qquad \qquad+ \sup_{i\in \N} H_{i}.\sum_{i \in \N} \E\left( \sum_{\substack{ \vert w \vert = i -1\\ \vert wa\vert=\vert wb\vert=i}}
 V_i(X(wa),X(wb))\right)<\infty.
\Eea
Then, $Z_n(g_n)/ m_n(x,{\bf e})\rightarrow 0$ a.s., which completes the proof.
\end{proof}


\subsection{Applications}

We now provide some applications of the previous results. 

\subsubsection{Weak law of large numbers along branching trees}
\label{WLLNT}
First,  we consider the neutral case, which means that the reproduction law 
of the individuals to do not depend on their trait. So the underlying genealogy is simply a branching process  (possibly non-homogeneous) and 
Assumption \ref{sep}(a) can be easily checked. 
We also note that $m_n:=m_n(x,{\bf e},\XX)=\Pi_{j=0}^{n-1}m(x,T^j{\bf e})$ does not depend on $x$.\\
Thanks to Theorem \ref{LGNL2}, we only require weak ergodicity of the auxiliary Markov chain, whose kernel transition simplifies as
$$Q_{n}(x,{\bf e}, dy):=  m_1(x, {\bf e},dy)\frac{1}{m_1(x,{\bf e},\XX)}=P(x,{\bf e},dy).$$
Moreover $W_n=Z_n/m_n$ is (a.s. with respect to the environment) a martingale which converges to a positive limit on the non extinction event thanks to $L^2$ assumptions, so that we obtain
$$ \P_{{\bf e}}\left( \left\vert\frac{f_n.Z_n(f)}{Z_n(\XX)} - \mu_n(f)\right\vert \geq \eta \ ; \forall n\in \N,
  Z_n(\XX)>0 \right)\stackrel{n\rightarrow\infty}{\longrightarrow} 0.$$
  We recover here  classical weak law of large numbers for proportions of individuals with a given trait for  Markov chains along trees, such as  Galton-Watson trees \cite{guyon} (Section 2.2),  \cite{DelMar} (Theorem 1.3) and   
  branching processes in random environment \cite{BanHua} (Theorem 3.2).

\subsubsection{Under Doeblin type conditions}
\label{Doeb}
For convenience, we assume here strong Doeblin type conditions  on the mean measure, in the same vein as  Section 2.
\begin{Ass} \label{annU}
There exist $M: E\rightarrow [1,\infty)$ such that for all  $x\in \XX, {\bf e} \in E$,
$$ m_1(x,{\bf e}, A)\leq M({\bf e})m_1(y,{\bf e},A).$$
\end{Ass}
\noindent We could relax this assumption, for example by requiring such an inequality for 
$m_b$ instead of  $m$, for some $b\geq 1$. We note 
that this assumption  hold if   both  $m(x,{\bf e})$ and $P(x,{\bf e}, .)$ satisfy the analogous condition. We 
 refer to the next part and to \cite{Mukh} for more general conditions in  the-non homogeneous framework.\\
Let us denote
$$\sigma({\bf e}):=\sup_{x\in \XX} \E(N(x,{\bf e})^2), \qquad D({\bf e}):=\frac{\sigma ({\bf e})M({\bf e})  M(T{\bf e}) ^{2}}{m(x,T{\bf e})}$$
to state the result.
\begin{Cor} \label{D}
Let ${\bf e} \in E, x\in \XX$ and $f\in \B_b(\XX)$. We assume that
 Assumption \ref{annU} holds with
\be
\label{coeffD}
\sum_{n\geq 1} \frac{1+D(T^{n-1}{\bf e})}{ m_{n}(x,{\bf e},\XX)}<\infty,  \quad \sum_{n\geq 0}   \prod_{k=0}^n (1-1/M(T^k{\bf e})^2)<\infty.
\ee
Then, $Z_n(\XX)/m_n(x,{\bf e},\XX)$ is bounded in $L^{2}_{{\bf e},\delta_x}$ and 
 $$ \frac{Z_n(f)- Z_n(\XX)Q_{0,n}(x,{\bf e},f)}{m_n(x,{\bf e},\XX)}\stackrel{n\rightarrow\infty}{\longrightarrow} 0 \qquad \P_{{\bf e}, \delta_x} \quad \text{a.s}.$$
 \end{Cor}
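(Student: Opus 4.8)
The plan is to derive Corollary \ref{D} as an application of Theorem \ref{coras}, so the task reduces to checking its two hypotheses \eqref{Hyp1as} and \eqref{Hyp2as} under Assumption \ref{annU} and the summability conditions \eqref{coeffD}. First I would record the simple consequence of Assumption \ref{annU} that $m_n(x,{\bf e},\XX) \leq \bigl(\prod_{k=0}^{n-1}M(T^k{\bf e})\bigr) m_n(y,{\bf e},\XX)$ for all $x,y$, obtained by iterating the one-step inequality along the semigroup; this controls all the ratios $m_{n-i}(y,T^i{\bf e},\XX)/m_n(x,{\bf e},\XX)$ appearing in $V_i$ and in $Q_{i,n}$. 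In particular, using the branching identity $m_n(x,{\bf e},\XX)=\int m_1(x,{\bf e},dz)m_{n-1}(z,T{\bf e},\XX)$ together with Assumption \ref{annU}, one gets a two-sided comparison of $m_{i+k}(x,{\bf e},\XX)$ with $m_i(x,{\bf e},\XX)\,m_k(\cdot,T^i{\bf e},\XX)$ up to the factor $M(T^i{\bf e})$, which lets me bound $V_i(x_0,x_1)\le M(T^i{\bf e})^2/m_i(x,{\bf e},\XX)^2$ uniformly in $x_0,x_1$.

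Next I would verify the first condition in \eqref{Hyp1as}. The summability $\sum_n m_n(x,{\bf e},\XX)^{-1}<\infty$ is immediate since it is a weaker statement than the first sum in \eqref{coeffD} (which contains the extra nonnegative $D$-term). For the second part of \eqref{Hyp1as}, I would write $\E_{{\bf e},\delta_x}\bigl(\sum_{|w|=i-1,|wa|=|wb|=i} V_i(X(wa),X(wb))\bigr)$, bound $V_i$ by $M(T^i{\bf e})^2/m_i(x,{\bf e},\XX)^2$, and then the remaining expectation is the mean number of sibling pairs in generation $i$, namely $\E_{{\bf e},\delta_x}\bigl(\sum_{|w|=i-1}N(X(w),T^{i-1}{\bf e})(N(X(w),T^{i-1}{\bf e})-1)\bigr)$, which is at most $\sigma(T^{i-1}{\bf e})\,m_{i-1}(x,{\bf e},\XX)$. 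Combining with the comparison $m_i(x,{\bf e},\XX)\ge m_{i-1}(x,{\bf e},\XX)\,\inf_z m(z,T^{i-1}{\bf e})$ and Assumption \ref{annU} to relate $\inf_z m(z,T^{i-1}{\bf e})$ to $m(x,T^{i-1}{\bf e})/M(T^{i-1}{\bf e})$, the $i$-th term is controlled by a constant times $D(T^{i-1}{\bf e})/m_i(x,{\bf e},\XX)$ (after also absorbing the $M$-factors), which is summable by the first hypothesis in \eqref{coeffD}. This simultaneously gives, via Lemma \ref{ctrL2}, that $Z_n(\XX)/m_n(x,{\bf e},\XX)$ is bounded in $L^2_{{\bf e},\delta_x}$.

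For \eqref{Hyp2as}, the point is that under Assumption \ref{annU} the auxiliary kernels $Q_{i,n}$ inherit a Doeblin-type contraction. Indeed $Q_n(x,{\bf e},dy)=m_1(x,{\bf e},dy)\,m_{n-1}(y,T{\bf e},\XX)/m_n(x,{\bf e},\XX)$, and Assumption \ref{annU} gives a minorization $Q_n(x,{\bf e},\cdot)\ge (1/M({\bf e})^2)\,\eta_n(\cdot)$ for a probability measure $\eta_n$ independent of $x$ (the $M({\bf e})^2$ comes from bounding both $m_1(x,{\bf e},dy)$ below by $m_1(y',{\bf e},dy)/M({\bf e})$ and the ratio $m_{n-1}(y,T{\bf e},\XX)/m_n(x,{\bf e},\XX)$ below similarly). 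Hence the Dobrushin coefficient of $Q_j(\cdot,T^{n-1-j}{\bf e},\cdot)$ is at most $1-1/M(T^{n-1-j}{\bf e})^2$, and the product over the composition $Q_{i,n}$ telescopes to $\prod_{k=i}^{n-1}(1-1/M(T^k{\bf e})^2)$. With $f$ (hence $f\circ f_n$) bounded by $b$, the total variation contraction gives $\sup_\lambda|Q_{i,n}(\lambda,T^i{\bf e},f\circ f_n)-\mu_n(f)|\le 2b\prod_{k=i}^{n-1}(1-1/M(T^k{\bf e})^2)$ with $\mu_n(f)$ defined as the common limit; squaring and summing over $n\ge i$ gives a bound by $4b^2\sum_{n\ge i}\prod_{k=i}^{n-1}(1-1/M(T^k{\bf e})^2)^2$, which is dominated by the second sum in \eqref{coeffD} (after noting $(1-t)^2\le 1-t$ and shifting indices; the $\sup_i$ is harmless since dropping the first $i$ factors only decreases each term relative to starting the product at $0$). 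Plugging \eqref{Hyp1as} and \eqref{Hyp2as} into Theorem \ref{coras} with $Q_{0,n}(x,{\bf e},f)$ playing the role of $\mu_n(f)$ (the constructed $\mu_n$ and $Q_{0,n}(x,{\bf e},\cdot)$ coincide asymptotically by Assumption \ref{annU}) yields the claimed almost sure convergence, and the $L^2$-boundedness is exactly Lemma \ref{ctrL2}.

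The main obstacle I anticipate is bookkeeping the $M$-factors cleanly: one must make sure the products $\prod M(T^k{\bf e})$ that arise when comparing $m_n$ at different starting points do not destroy summability, which is why the conditions in \eqref{coeffD} are phrased with $D$ and with the telescoping product $\prod_k(1-1/M(T^k{\bf e})^2)$ rather than with $M$ alone. Concretely, the delicate point is that in the bound for $V_i$ we only pick up $M(T^i{\bf e})^2$ (the comparison at a single time $i$), not a growing product — this works because $V_i$ compares $m_{n-i}(\cdot,T^i{\bf e},\XX)$ in the numerator with $m_n(x,{\bf e},\XX)=m_i(x,{\bf e},\XX)\cdot(\text{roughly }m_{n-i}(\cdot,T^i{\bf e},\XX))$ in the denominator, so the $n$-dependence cancels and only the one-block distortion at time $i$ survives. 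Verifying that cancellation carefully, and likewise checking that the Doeblin minorization constant for $Q_n$ is genuinely $1/M(T^{n-1}{\bf e})^2$ and uniform in the terminal horizon $n$, are the two places where the argument must be done with care rather than by a routine estimate.
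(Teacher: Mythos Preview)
Your proposal is correct and follows essentially the same route as the paper: bound $V_i\le M(T^i{\bf e})^2/m_i(x,{\bf e},\XX)^2$ via the one–step comparison of Assumption~\ref{annU} (the paper isolates this as Lemma~\ref{lem}), combine with the sibling–pair count $\le \sigma(T^{i-1}{\bf e})\,m_{i-1}(x,{\bf e},\XX)$ to obtain \eqref{Hyp1as} and hence the $L^2$ bound from Lemma~\ref{ctrL2}, then use the Doeblin minorization $Q_n(x,{\bf e},\cdot)\ge M({\bf e})^{-2}Q_n(y,{\bf e},\cdot)$ to get the contraction $|Q_{i,n}(\lambda,{\bf e},f)-Q_{i,n}(\mu,{\bf e},f)|\le \|f\|_\infty\prod_{j=i}^{n-1}(1-1/M(T^j{\bf e})^2)$ and conclude via Theorem~\ref{coras}.

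One small slip: your parenthetical ``dropping the first $i$ factors only decreases each term relative to starting the product at $0$'' has the inequality the wrong way round, since each factor $1-1/M(T^k{\bf e})^2\in[0,1)$ makes the product smaller, not larger; removing factors therefore \emph{increases} $\prod_{k=i}^{n-1}(\cdot)$ compared to $\prod_{k=0}^{n-1}(\cdot)$. The paper does not spell out this $\sup_i$ step either — it simply asserts that the second condition in \eqref{coeffD} yields \eqref{Hyp2as} — so your level of detail matches the original, but be aware that the reason you gave is not the right one.
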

\noindent 
The assumptions above ensure that $m_{n}(x,{\bf e},\XX)$ goes to $\infty$ (supercriticality). The assumptions are fulfilled  for example if $m_n$ tends fast enough to $\infty$  and both $\sigma,M, m$ are bounded. \\
The proof here uses Theorem \ref{coras} and some additional lemma allowing to check the assumptions required.  We may derive an application of this result to the random environment framework directly or relax the assumptions to get only convergence in probability. \\
The fact that  $Z_n(\XX)/m_n(x,{\bf e},\XX)$ is bounded  in $L^{2}_{{\bf e},\delta_x}$ ensures that
$Z_n(\XX)\rightarrow \infty$ with positive probability. Using Paley-Sigmund inequality and 
a stronger assumption in the first part of (\ref{coeffD}) to ensure the uniformity with respect to the initial environment, one can prove that 
 $\liminf_{n\rightarrow\infty} Z_n(\XX)/m_n(x,{\bf e},\XX)>0$ on the event  $\{Z_n(\XX)\rightarrow \infty\}$. Then,  $\P_{{\bf e}, \delta_x} \quad \text{a.s}$ on the event $\{Z_n(\XX)\rightarrow \infty\}$,
 we have
$ Z_n(f)/Z_n(\XX) - Q_{0,n}(x,{\bf e},f)\rightarrow 0$ as $n\rightarrow \infty$. 

\begin{Lem} \label{lem} Under Assumption \ref{annU}, for all $x,y \in \XX, {\bf e} \in E, n\geq 0, A \in \mathcal{B}_{\XX}$,
\Bea
&&\frac{m_{n}(x,{\bf e})}{m_{n}(y,{\bf e})} \in [M({\bf e})^{-1}, M({\bf e})], \qquad Q_n(x,{\bf e}, A)\leq M({\bf e})^2Q_n(y,{\bf e}, A). 
\Eea
\end{Lem}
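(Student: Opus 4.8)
The plan is to establish the two claims in turn, the first by induction on $n$ and the second as a direct consequence. For the first claim, recall that by the branching property and the definition of the mean semigroup we have the recursion
$$m_{n}(x,{\bf e},\XX)=\int_{\XX} m_1(x,{\bf e},dy)\, m_{n-1}(y,T{\bf e},\XX),$$
with $m_0\equiv 1$. The base case $n=0$ is trivial since $m_0(x,{\bf e})/m_0(y,{\bf e})=1\in[M({\bf e})^{-1},M({\bf e})]$ because $M\geq 1$. For the inductive step, I would use Assumption \ref{annU} in the form $m_1(x,{\bf e},dz)\leq M({\bf e})\,m_1(y,{\bf e},dz)$ (and symmetrically), integrate both sides against the nonnegative kernel $z\mapsto m_{n-1}(z,T{\bf e},\XX)$, and obtain
$$m_{n}(x,{\bf e},\XX)=\int m_1(x,{\bf e},dz)\,m_{n-1}(z,T{\bf e},\XX)\leq M({\bf e})\int m_1(y,{\bf e},dz)\,m_{n-1}(z,T{\bf e},\XX)=M({\bf e})\,m_{n}(y,{\bf e},\XX),$$
and the reverse inequality by symmetry. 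Note that the inductive hypothesis is not even needed here: the bound propagates directly because Assumption \ref{annU} compares the one-step kernels pointwise in the starting point, and $m_{n-1}(\cdot,T{\bf e},\XX)$ only enters as an integrand; so in fact the argument is a one-line domination rather than a genuine induction.

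For the second claim, I would unfold the definition $Q_n(x,{\bf e},dy)=m_1(x,{\bf e},dy)\,m_{n-1}(y,T{\bf e},\XX)/m_n(x,{\bf e},\XX)$ and integrate over $A$:
$$Q_n(x,{\bf e},A)=\frac{1}{m_n(x,{\bf e},\XX)}\int_A m_1(x,{\bf e},dy)\,m_{n-1}(y,T{\bf e},\XX).$$
Then I bound the numerator using Assumption \ref{annU}, $m_1(x,{\bf e},dy)\leq M({\bf e})\,m_1(y',{\bf e},dy)$ — wait, more carefully: Assumption \ref{annU} gives $m_1(x,{\bf e},dy)\leq M({\bf e})\,m_1(z,{\bf e},dy)$ for the same target increment $dy$, so I replace the starting point $x$ by $y$ (the other individual in the statement) to get $m_1(x,{\bf e},dy)\leq M({\bf e})\,m_1(y,{\bf e},dy)$ in the numerator, and I bound the denominator from below using the first claim, $m_n(x,{\bf e},\XX)\geq M({\bf e})^{-1}m_n(y,{\bf e},\XX)$. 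Combining these two estimates yields
$$Q_n(x,{\bf e},A)\leq \frac{M({\bf e})}{M({\bf e})^{-1}m_n(y,{\bf e},\XX)}\int_A m_1(y,{\bf e},dy')\,m_{n-1}(y',T{\bf e},\XX)=M({\bf e})^2\,Q_n(y,{\bf e},A),$$
which is exactly the desired inequality.

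There is no real obstacle here; the only point requiring slight care is the bookkeeping of which variable plays which role in Assumption \ref{annU} when passing from $m_1(x,{\bf e},\cdot)$ to $m_1(y,{\bf e},\cdot)$ in the numerator of $Q_n$, and making sure the inequality direction for the denominator (lower bound on $m_n(x,{\bf e},\XX)$) is the one coming from the first claim. One should also note that Assumption \ref{annU}, although stated for $m_1(\cdot,{\bf e},A)$ with a fixed set $A$, may be applied to infinitesimal increments $dy$ because it holds for every measurable $A$; alternatively one can avoid differential notation entirely by working with the integrated quantities $\int_A m_1(x,{\bf e},dy)\,h(y)$ for nonnegative bounded measurable $h$ and invoking a monotone class argument, but this is routine.
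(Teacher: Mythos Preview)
Your proof is correct and follows essentially the same route as the paper: the first inequality comes from the recursion $m_n(x,{\bf e})=\int m_1(x,{\bf e},dz)\,m_{n-1}(z,T{\bf e})$ together with Assumption~\ref{annU} applied to $m_1(x,{\bf e},dz)\leq M({\bf e})\,m_1(y,{\bf e},dz)$, and the second inequality is obtained by bounding numerator and denominator of $Q_n$ separately to pick up the two factors of $M({\bf e})$. Your observation that no genuine induction is needed for the first part is also how the paper proceeds.
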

\begin{proof}
We have
\Bea
m_{n}(x,{\bf e})=\int_{\XX}m_1(x,{\bf e}, dz)m_{n-1}(z,T{\bf e}) 
 &\leq & M({\bf e})\int_{\XX} m_1(y,{\bf e}, dz)m_{n-1}(z,T{\bf e})\\
& \leq & M({\bf e})m_{n}(y,{\bf e}).
\Eea
It yields  the first part of the lemma.
We then write
\Bea
Q_n(x,{\bf e}, A)&=&\int_{\XX}\frac{m_1(x,{\bf e},dz)}{m_n(x,{\bf e})} m_{n-1}(z,T{\bf e}, A)\nonumber \\
&&\leq M({\bf e})^2\int_{\XX}\frac{m_1(y,{\bf e},dz)}{m_n(y,{\bf e})} m_{n-1}(z,T{\bf e}, A)\leq M({\bf e})^2Q_n(y,{\bf e}, A) \label{DQ}
\Eea
to get the second part of the lemma.
\end{proof}
\begin{proof}[Proof of Corollary \ref{D}]  Using the branching property in generation $i$ and the first part of Lemma
\ref{lem}, we have for all $x,y \in \XX$,
$$m_{i+k}(x,{\bf e}) \geq m_{i}(x, {\bf e}) M(T^{i}e)^{-1} m_{k}(y,T^{i}{\bf e}).$$
Then, letting $y=x_0,x_1$, 
 $$ V_i(x_0,x_1)=\sup_{ k\geq 0 } \frac{m_{k}(x_0, T^{i}{\bf e})m_{k}(x_1, T^{i}{\bf e})}{m_{k+i}(x,{\bf e})^2} \leq  \frac{M(T^i{\bf e}) ^{2}}{m_{i}(x,{\bf e})^2} .$$
Moreover $m_i(x,{\bf e}) \geq m_{i-1}(x,{\bf e})M(T^{i-1}{\bf e})m(x,T^{i-1}{\bf e})$ and
\Bea
 \sum_{i \geq 1} \E_{{\bf e},\delta_x}\left( \sum_{\vert w\vert =i-1} \sum_{ \vert wa\vert=\vert wb\vert=i} V_i(X(wa),X(wb))\right)
&\leq &  \sum_{i \geq 1} \E_{{\bf e},\delta_x}(Z_{i-1}(\XX))\sigma(T^{i-1}{\bf e}) \frac{ M(T^i{\bf e}) ^{2}}{m_{i}(x,{\bf e})^2}\\
&\leq  &\sum_{i \geq 1}\sigma(T^{i-1}{\bf e}) \frac{M(T^{i-1}{\bf e})  M(T^i{\bf e}) ^{2}}{m(x,T^{i-1}{\bf e})m_{i}(x,{\bf e})}.
\Eea
So the first part of  (\ref{coeffD}) ensures that (\ref{Hyp1as}) is fulfilled and  Lemma \ref{ctrL2} ensures  that $Z_n(\XX)/m_n(x,{\bf e})$ is bounded in $L^2_{{\bf e}, \delta_x}$.
The first part of the Theorem is proved and we prove now the a.s. convergence.

Using the second part of Lemma \ref{lem}, we first get the geometric ergodicity of $Q_{i,n}$. Indeed, a simple induction leads to  
$$\vert Q_{i,n}(\lambda, {\bf e}, f)- Q_{i,n}(\mu, {\bf e}, f) \vert \leq \parallel f\parallel_{\infty}\prod_{j=i}^{n-1} (1-1/M(T^j{\bf e})^2)$$
and  the second part of (\ref{coeffD}) ensures that (\ref{Hyp2as}) hold.
Then Theorem \ref{coras} yields the expected a.s. convergence.
\end{proof}

\subsubsection{Under Lyapounov type  conditions}
One can relax  the conditions of Section \ref{Doeb} and still get 
the geometric ergodicity of   $Q_{i,n}$ via  Harris ergodic theorems. A well known method  relies  on the use of Lyapounov function outside a compact set, see e.g. \cite{MT, HM}. The results can be easily adapted to the non-homogeneous setting required here to get sufficient conditions for (\ref{strerg}), Assumption \ref{ergo}(b) or (\ref{Hyp2as}). \\
But one also need to control the underlying genealogy to get a law of large numbers. The additional work required to apply finely Theorem \ref{coras} seems beyond  the scope of this paper and 
could be a   future work. Let us note that  Theorem \ref{brchthm} can be more easily applied and yields to first interesting results. \\


\subsubsection{Comments on multitype branching processes}
When the state space $\XX$ is finite, the process $Z$ is a multitype branching process and much finer results can be obtained.
In particular, the limit behavior of  $Z_n/m_n(x,{\bf e}, \XX)$ is known, see e.g. \cite{KLPP} in fixed environment and \cite{Cohn} in random environment.
Let us still mention that we provide in Lemma \ref{manytoone}  a slightly different spine decomposition than \cite{KLPP}, without  projection with respect to the eigenvector associated to the mean operator. It might be of interest.
Finally, we note that  Corollary \ref{D} may be applied easily  to get new results  in varying environment.  \\

In the two next applications, we consider the case when the reproduction law does not depend
on the trait, so $Q_{i,n}$ just depends on $i$ and the auxiliary Markov chain with kernel
$Q_i$ is denoted by $Y$. We  focus in these two examples on the  
functions $f_n$ one can use to derive relevant results.

\subsubsection{Comments on branching random walks and random environment}
Branching random walks have been largely studied from the pioneering works of Biggins  (see e.g. \cite{biggins77}) and central limit theorems have been obtained to describe the repartition of the population 
for large times \cite{b}. \\
For branching random walks (possibly in varying environment in time and space), the auxiliary Markov chain $Y$ is a random walk 
(possibly in varying environment in time and space). To get law of large numbers for $Z_n$, one can then 
 check that some
 convergence in law 
$$(Y_n-a_n)/b_n\Rightarrow W$$
where the limit $W$ does not depend on the initial state $x\in \XX$. Then we can use Theorem \ref{LGNL2} with $f_n(x)=(x-a_n)/b_n$ to obtain the asymptotic proportion
of individuals whose trait $x$ satisfies $f_n(x)\in [a,b]$. It is given  by $\P(W\in [a,b])$ soon as $\P(W\in \{a,b\} )=0$.\\
Thus it can be used when the auxiliary process satisfies a central limit theorem. We refer to \cite{BanHua} Section 3.4 for some examples in the case when the reproduction law does not depend 
on the trait $x\in \XX$ and the environment is stationary ergodic in time. \\
One can actually directly derive some (rougher) law of large numbers dierctly from the speed of random walk (in environment), i.e. use
$Y_n/a_n \Rightarrow v$. As as example, we recall that in dimension $1$ the random walk in random environment $Y$ may be subalistic and $b_n=n^{\gamma}$ with $\gamma\in (0,1)$.\\
We finally  mention \cite{MN} when the offspring distribution is chosen in an i.i.d. manner for each time $n$ and location $x \in \ZZ$. \\

\subsubsection{Comments on Kimmel's cell infection model and non-ergodicity}
In the Kimmel's branching model \cite{ban} for cell division with parasite infection, the auxiliary Markov chain $Y_n$ is 
a Galton-Waston in (stationary ergodic) random environment. For example, in the case when no extinction is possible, i.e.
$\P_1(Y_1>0)=1$, under the usual integrability assumption we have
$$Y_n/\Pi_{i=0}^{n-1} m_i \stackrel{n\rightarrow \infty}{\longrightarrow} W \in (0,\infty) \qquad \text{a.s.}$$
where $m_i$ is the mean  number of offsprings for each parasite in generation $i$. We note that the distribution of
$W$ depends  on the inital value of $Y$.
But 
$$\log(Y_n)/n \stackrel{n\rightarrow \infty}{\longrightarrow} \E(\log m_0) \qquad \text{a.s.}$$
and the limit here does not depend on $Y_0$ anylonger. So we get the  ergodic property required to  use use Theorem \ref{LGNL2} with $f_n(x)=\log(x)/n$. We obtain that
 the proportion of cells in generation whose number of parasites is between $\exp([\E(\log m_0)-\epsilon]n)$ and $\exp([\E(\log m_0)+\epsilon]n)$ goes to $1$ in probability, for every $\epsilon>0$.
This yields some first new result on the infection propagation, which could be improved by additional work. \\
Soon as the number of parasites in a cell can be equal to zero, i.e. $\P_1(Y_1=0)>0$, ergodicity is failing
and some additional work is needed. Using monotonicity argument, one may still conclude, see \cite{ban} for an example. 


\section{Local densities and extremal particles.}

We deal now with local densities and the associated ancestral lineages. More precisely, we focus on the number of individuals whose trait belongs  to some set $A_n$ in generation $n$, when $n\rightarrow \infty$.\\
We have proved  the many-to-one formula
$$\E(Z_n(A_n))=m_n(x,{\bf e},\XX)Q_{0,n}(x,{\bf e},A_n)$$
in the previous section. We have then checked that the ergodicity of $Q_{0,n}$ ensures that $Z_n(A)/m_n(x,{\bf e},\XX)-Q_{0,n}(x,{\bf e},A)$ goes to zero under some conditions. \\
 Now we wish to compare the asymptotic behaviors of $Z_n(A_n)$ and $m_n(x,{\bf e},\XX)Q_{0,n}(x,{\bf e},A_n)$, when $Q_{0,n}(x,{\bf e},A_n)\rightarrow 0$  as $n\rightarrow \infty$. In particular, we are 
studying the links between the local densities $Z_n(A_n)$ for large times and  the large deviations events of $Q_{0,n}$, i.e. the asymptotic behavior of $Q_{0,n}(x,{\bf e}, A_n)$. \\
Such questions have been well studied for branching random walks from the pioneering work of Biggins  \cite{biggins77},  and
we refer to \cite{Rspa,HS} for related results and to \cite{Rouault, Shi} for reviews on the topic.
We also mention  \cite{CP, MN2} for the random environment framework and  \cite{DMS}
for large deviations Markov chain along tree with $n$ vertices. \\
The upper bound for such results comes directly from  Markov inequality and we are working on the lower bound. 
As usual, we could then   derive the rough asymptotic behavior of the extremal (minimal or maximal position)  individual. It covers  classical  results for branching random walks on the speed  of the extremal individual at the $\log$ scale. We provide some other examples motivated by cell's infection model, where the associated deviation strategy is more subtle. We mention also  that   $Z_n(A_n)$ may be  negligible compared
to $m_n(x,{\bf e},\XX)Q_{0,n}(x,{\bf e},A_n)$.\\

\begin{Def} For all $0\leq i\leq n$, $A,B\in \mathcal B_{\XX}$,  we define the measure
$$\mu_{i,n}(A, {\bf e}, B) [l, \infty):=\inf_{x\in A}\P_{\delta_x, T^i{\bf e}} (Z_{n-i}(B) \geq l)$$
\end{Def}
\noindent
We note $\bar{\mu}$ the mean of $\mu$ and $\hat{\mu}$ the variance of $\mu/\bar{\mu}$, so that
$$\bar{\mu}_{i,n}(A, {\bf e}, B)=\sum_{l\geq 1} \mu_{i,n}(A, {\bf e}, B) [l, \infty), \qquad \hat{\mu}_{i,n}(A, {\bf e}, B)=\frac{\sum_{l\geq 1} l^2\mu_{i,n}(A, {\bf e}, B) \{l\}}{\bar{\mu}_{i,n}(A, {\bf e}, B)^2} - 1.$$

We start by coupling our process in the first stages by a particular branching process in varying environment to  use both the convergence  of the associated martingale during these first steps $(i=0,\ldots,\phi(n))$ and a law of large number argument on the remaining time ($i=0,\ldots, \psi(n)$).
\begin{Lem}\label{klem} 
Let ${\bf e}$ and $x\in \XX$.  We assume that there exists a  non-decreasing sequence $k_i$ of integers  and a sequence   $B_i$ of subsets of  $\XX$ such that

$$x\in B_0, \qquad  \liminf_{i\rightarrow \infty} \bar{\mu}_{k_i,k_{i+1}}(B_i,{\bf e},B_{i+1})>1, \qquad
 \sum_{i\geq 0}  \frac{\hat{\mu}_{k_i,k_{i+1}}(B_i,{\bf e},B_{i+1})}{\Pi_{i=0}^{n-1}\bar{\mu}_{k_i,k_{i+1}}(B_i,{\bf e},B_{i+1})}<\infty.$$
Then there exists an event $A$ whose probability is positive, such that for any non-decreasing sequence of integers  $\phi_n,\psi_n$ and   $k_{i,n}$ 
and any  sequence $B_{i,n}$  of subsets of $\XX$ which satisfy
$$k_{0,n}=k_{\phi_n},  \qquad \phi_n\rightarrow \infty, \qquad \sup_n \sum_{i=0}^{\psi_n-1}  \frac{\hat{\mu}_{k_{i,n},k_{i+1,n}}(B_{i,n},{\bf e},B_{i+1,n})}{\Pi_{i=0}^{n-1}\bar{\mu}_{k_{i,n},k_{i+1,n}}(B_{i,n},{\bf e},B_{i+1,n})}<\infty,$$
we have
$$\left\{\liminf_{n\rightarrow \infty} \frac{Z_{n}(B_{n,\psi_n})}{P_n}>0\right\} \supset A,$$
where 
$$P_n:= \prod_{i=0}^{\phi_n-1}\bar{\mu}_{k_i,k_{i+1}}(B_i,{\bf e},B_{i+1}). 
\prod_{i=0}^{\psi_n-1} \bar{\mu}_{k_{i,n},k_{i+1,n}}(B_{i,n},{\bf e},B_{i+1,n}) $$
\end{Lem}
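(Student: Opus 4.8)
\emph{Strategy.} The plan is to build, inside the genealogical tree, a two–stage branching process in varying environment (BPVE) that stochastically minorizes the local populations we care about: on the first $\phi_n$ stages we exploit the $L^2$–convergence of the associated non–negative martingale, and on the remaining $\psi_n$ stages we run a law of large numbers (Lemma~\ref{LemmLGN}) started from the large number of individuals produced by the first stage. Throughout, $\mathcal F_k:=\sigma(X(v):|v|\le k)$.

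\emph{Step 1: the minorizing BPVE.} Starting from the root, which lies in $B_0$ since $x\in B_0$, define inductively marked sets $S_i\subset\{u:|u|=k_i,\,X(u)\in B_i\}$ with $\zeta_i:=\#S_i$, $\zeta_0=1$, as follows. Conditionally on generation $k_i$, the branching property makes the families $C_u:=\{v:|v|=k_{i+1},\,v\ge u,\,X(v)\in B_{i+1}\}$, $u\in S_i$, independent, with $\#C_u$ distributed (marginally) as $Z_{k_{i+1}-k_i}(B_{i+1})$ under $\P_{\delta_{X(u)},T^{k_i}{\bf e}}$; since $X(u)\in B_i$, the very definition of $\mu_{k_i,k_{i+1}}(B_i,{\bf e},B_{i+1})$ gives that $\#C_u$ stochastically dominates an integer variable with tail $\mu_{k_i,k_{i+1}}(B_i,{\bf e},B_{i+1})[\cdot,\infty)$. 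Coupling these with i.i.d.\ minorizing copies $L_u$ and keeping $L_u$ elements of $C_u$ (a measurable selection) defines $S_{i+1}$ with $\zeta_{i+1}=\sum_{u\in S_i}L_u$. Hence $(\zeta_i)_{i\ge0}$ is a BPVE with reproduction laws $\mu_{k_i,k_{i+1}}(B_i,{\bf e},B_{i+1})$, and $Z_{k_i}(B_i)\ge\zeta_i$ a.s.\ for every $i$. The same construction, restarted at generation $k_{\phi_n}=k_{0,n}$ from the individuals of $S_{\phi_n}$ (all in $B_{\phi_n}\subseteq B_{0,n}$, matching the two sequences at their junction) and run along $k_{0,n}<k_{1,n}<\dots<k_{\psi_n,n}=n$ with laws $\mu_{k_{i,n},k_{i+1,n}}(B_{i,n},{\bf e},B_{i+1,n})$, produces for each such individual $m$ a count $\eta_m$ with $Z_n(B_{\psi_n,n})\ge\sum_{m=1}^{\zeta_{\phi_n}}\eta_m$, the $(\eta_m)$ being i.i.d.\ given $\mathcal F_{k_{\phi_n}}$.

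\emph{Steps 2 and 3: martingale, then law of large numbers.} Put $\Pi_i:=\prod_{j=0}^{i-1}\bar\mu_{k_j,k_{j+1}}(B_j,{\bf e},B_{j+1})$. Then $W_i:=\zeta_i/\Pi_i$ is a non–negative martingale, and the standard BPVE second–moment identity yields $\E(W_i^2)\le 1+\sum_{j=0}^{i-1}\hat\mu_{k_j,k_{j+1}}(B_j,{\bf e},B_{j+1})/\Pi_j$, bounded by the second hypothesis; hence $W_i\to W_\infty$ a.s.\ and in $L^2$ with $\E W_\infty=1$, so $A:=\{W_\infty>0\}$ has positive probability, and $\liminf_i\bar\mu_{k_i,k_{i+1}}(B_i,{\bf e},B_{i+1})>1$ forces $\Pi_i\to\infty$, so on $A$ we have $\zeta_i\sim W_\infty\Pi_i\to\infty$ and $\zeta_{\phi_n}/P_n^{(1)}\to W_\infty$ with $P_n^{(1)}:=\Pi_{\phi_n}$. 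For the second stage, given $\mathcal F_{k_{\phi_n}}$ each $\eta_m$ has mean $P_n^{(2)}:=\prod_{i=0}^{\psi_n-1}\bar\mu_{k_{i,n},k_{i+1,n}}(B_{i,n},{\bf e},B_{i+1,n})$ and normalized variance at most $\sup_n\sum_{i=0}^{\psi_n-1}\hat\mu_{k_{i,n},k_{i+1,n}}(B_{i,n},{\bf e},B_{i+1,n})/\prod_{j=0}^{i-1}\bar\mu_{k_{j,n},k_{j+1,n}}(B_{j,n},{\bf e},B_{j+1,n})=:C<\infty$ by the third hypothesis; setting $X_{n,m}:=\eta_m/P_n^{(2)}-1$, these are centered, conditionally independent given $\mathcal F_{k_{\phi_n}}$, and $\P(|X_{n,m}|>t\mid\mathcal F_{k_{\phi_n}})\le (1+C)/(t-1)^2$ for $t>1$, hence uniformly dominated by a fixed measure with finite first moment. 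Applying Lemma~\ref{LemmLGN} with $N_n=\zeta_{\phi_n}$ gives $\zeta_{\phi_n}^{-1}\sum_{m=1}^{\zeta_{\phi_n}}X_{n,m}\to0$ a.s.\ on $A$, whence, recalling $P_n=P_n^{(1)}P_n^{(2)}$,
$$\frac{Z_n(B_{\psi_n,n})}{P_n}\ \ge\ \frac{\zeta_{\phi_n}}{P_n^{(1)}}\Bigl(1+\frac1{\zeta_{\phi_n}}\sum_{m=1}^{\zeta_{\phi_n}}X_{n,m}\Bigr)\ \stackrel{n\rightarrow\infty}{\longrightarrow}\ W_\infty\ >\ 0\qquad\text{on }A,$$
which is the asserted inclusion.

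\emph{Main obstacle.} The delicate point is the application of Lemma~\ref{LemmLGN}: one must verify its geometric–growth requirement $N_{n+1}/N_n\ge l>1$ for the relevant index (provided on $A$ by $\zeta_{\phi_n}\sim W_\infty\Pi_{\phi_n}$ together with $\liminf_i\bar\mu_{k_i,k_{i+1}}(B_i,{\bf e},B_{i+1})>1$ and the strict growth of $\phi_n$, which is where the two summability hypotheses enter), and keep the tail domination of $\eta_m/P_n^{(2)}$ uniform in $n$; the measurable restart of the minorizing BPVE from $S_{\phi_n}$ and the bookkeeping $P_n=P_n^{(1)}P_n^{(2)}$ are otherwise routine.
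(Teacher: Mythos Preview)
Your argument is correct and follows the same route as the paper: a two-stage minorizing BPVE along the tube, $L^2$-martingale convergence of $\zeta_i/\Pi_i$ to produce the event $A=\{W_\infty>0\}$, then Lemma~\ref{LemmLGN} applied to the normalized second-stage contributions $\eta_m/P_n^{(2)}-1$. Your explicit coupling and the second-moment identity $\E(W_i^2)=1+\sum_{j<i}\hat\mu_j/\Pi_j$ are in fact more detailed than the paper's treatment, and the two caveats you flag in your final paragraph (the junction condition $B_{\phi_n}\subseteq B_{0,n}$ and the strict growth of $\phi_n$ needed for the geometric-growth hypothesis of Lemma~\ref{LemmLGN}) are glossed over in the paper as well and are harmless in the intended applications.
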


\begin{proof}
We use a  coupling of the branching Markov chain $Z$ with a supercritical Branching Process in Varying Environment (BPVE). Roughly speaking,  it is obtained by 
selecting the individuals whose lineage lives in the tube $(B_i :  i \leq \phi(n),B_{j,n} : j\leq \psi_n)$. More precisely, we consider the subpopulation of $Z$ constructed recursively by keeping the descendance of the population  in generation $k_i$ whose trait belongs to $B_i$ for $i\leq \phi_n$ and then  belongs to $B_{j,n}$ for $j\leq \psi_n$.
The size  of the population  obtained by this construction in generation $k_i$ is a.s. larger than a branching process $N_i$ whose reproduction law in generation $i$ is 
$\mu_i:=\mu_{k_i,k_{i+1}}(B_i,{\bf e},B_{i+1})$. Similarly, the size of the population in generation $k_{\psi_n,n}$ is larger than a branching process in varying environment, 
with initial value equal to $N_{\phi_n}$ and
successive reproduction laws $\mu_{j,n}:=\mu_{k_{j,n},k_{i+1,n}}(B_{j,n},{\bf e},B_{j+1,n})$ for $j=0,\cdots, \psi_n$.
Thus, 
$$Z_n(B_{n,\psi(n)})\geq \sum_{j=1}^{N_{\phi_n}} U_{j,n},$$
where $U_{j,n}$ is distributed as a BPVE in generation $\psi_n$, denoted  by $U_n$, whose successive reproduction laws are 
$\mu_{k_{i,n},k_{i+1,n}}(B_{i,n},{\bf e},B_{i+1,n})$
for $i=0, \cdots, \psi_n-1$. Moreover $(U_{j,n} : j=0, \ldots,  \psi_n$) are independent  by branching property.  \\
Using the assumption
 $$\sum_{i\geq 0} \frac{Var(\mu_i/\bar{\mu_i})}{\Pi_{j=0}^{i-1}\bar{\mu_i}}<\infty,$$
we know by  orthogonality that the martingale
$$\frac{N_i}{\prod_{j=0}^{i-1} \bar \mu_j}$$
converges in $L^2_{{\bf e}}$ and has a finite positive limit $W$ on the survival event $A:=\{\forall n\geq 0 :  N_n>0\}$.
Recalling that $\liminf_{i\rightarrow \infty} \bar{\mu}_{i} >1$ by assumption, $A$ has positive probability and conditionally on that event, we have a.s. 
$$\liminf_{i\rightarrow \infty}\frac{N_{i+1}}{N_i}>1.$$

Similarly $U_{j,n}/\E(U_{j,n})$ is bounded by the moment assumptions and
 $$X_{j,n}\stackrel{d}{=} \frac{U_{j,n}-\E(U_{n})}{\E(U_{n})}$$
are independent random variables, which are   independent of $(N_i :  i=0,\ldots, \phi_n)$ and   bounded in $L^2_{{\bf e}}$. 
Thanks to  Lemma  \ref{LemmLGN},
$$ \frac{1}{N_{k_{\phi_n}}} \sum_{j=1}^{ N_{k_{\phi(n)}} } X_{j,n}$$
goes to $0$ as $n\rightarrow \infty$ a.s. on the event $A$.
Then
$$Z_n(B_{n,\psi(n)})\geq N_{k_{\phi_n}}\E(U_n). \left[1 +\epsilon_n\right]$$
where $\epsilon_n\rightarrow 0$. Finally, we use
$$   \E(N_{k_{\phi_n}})=\prod_{i=0}^{\phi_n-1}\bar{\mu}_{k_i,k_{i+1}}(B_i,{\bf e},B_{i+1}), \quad \E(U_n)= \prod_{i=0}^{\psi_n-1} \bar{\mu}_{k_{i,n},k_{i+1,n}}(B_{i,n},{\bf e},B_{i+1,n}) $$
to get
$$
 \liminf_{n\rightarrow \infty} \frac{Z_{n}(B_{n,\psi_n})}{P_n} \geq \liminf_{n\rightarrow \infty} \frac{N_{\phi_n} }{\E(N_{\phi_n} )}(1+\epsilon_n)  \geq W.
$$
Recalling  that $A=\{W>0\}$ and it has positive probability ends up the proof.
\end{proof}

\subsection{Monotone Branching Markov chain}
Our aim is to see the local densities in terms of the large deviations of the auxiliary process and the way this large deviation event is achieved.
First, let us derive from the previous lemma  the number of individuals in   $A_n=[a_n,\infty):=\{ x \in \XX : x \geq a_n\}$ in the monotone case, which yields the applications
for the cell models and branching random walks which initially motivated these questions. \\
Thus, by now, we assume that $\XX$ is totally ordered by $\leq$ and the branching Markov chain  satisfies the following condition.
\begin{Ass}[Monotonicity] 
\label{monot}
For all $x\leq y$, ${\bf e} \in E$ and  $a\in \XX$, we have
$$\P_{\delta_x, \bf e} (Z_1([a,\infty)) \geq l)\leq \P_{\delta_y, \bf e} (Z_1 ([a,\infty))\geq l) \qquad (l\geq 0).$$
\end{Ass}

\begin{Ass}[Mean growth rate] \label{MG} Let $\rho>0$ such that
$$\lim_{n\rightarrow \infty} \frac{1}{n} \log m_{n}(x,{\bf e}, [a_n,\infty)) =\rho.$$
We also assume that  there  exist $p\geq 1$ and $b_i\in \XX$ such that $x\geq b_0$ and
$$
\liminf_{i\rightarrow \infty}  m_{p}(b_{i}, T^{ip}{\bf e}, [b_{i+1}, \infty)) >1.
$$
Finally,  for every $\epsilon>0$, there exist $q=q(\epsilon)$,   $\phi(n)\rightarrow \infty$ 
and $(b_{j,n} : j,n\geq 0)$ such that
$$ \liminf_{n\rightarrow \infty}\frac{1}{n} \sum_{ j < (n-\phi(n)p)/q} \log m_{q}(b_{j,n},T^{i\phi(n)+jq}{\bf e}, [b_{j+1,n}, \infty))\geq \rho-\epsilon.$$
\end{Ass}
The values $(b_{i} :  i\leq \phi(n) , b_{j,n} :  j\leq \psi(n))$ correspond to the (lower) curve   which yields the trait 
of the subpopulation which realizes the main contribution to the  population size $Z_n([a_n,\infty))$ in generation $n$. 
This curve is a (straight) line for   branching random walk or for $a_n=1$ in the Kimmel's branching model \cite{ban}, see below. But this curve is not straight  for the other quantities of interest
in  Kimmel's branching model, such as the large deviations associated to $a_n \rightarrow \infty$. Other motivating examples when 
 the curve are not a straight are given by large deviation events which are realized in one  step of the process. It can be  the case  for 
 random walks with heavy tails or  autoregressive processes.

\begin{Thm}  \label{ldmb} Let ${\bf e} \in E$ and $x\in \XX$. Under the  Assumptions  \ref{monot}, \ref{MG}   and 
$$\sup\big\{\E(N(z, T^k{\bf e})^2) : z\in \XX, k \geq 0\big\}<\infty,$$ then 
$$\P_{{\bf e}, \delta_x}\left(\frac{1}{n}\log Z_n([a_n,\infty))\stackrel{n\rightarrow\infty}{\longrightarrow} \rho \right)>0.$$
\end{Thm}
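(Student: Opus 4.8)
The plan is to establish separately that $\limsup_n \frac1n\log Z_n([a_n,\infty)) \le \rho$ almost surely and that $\liminf_n \frac1n\log Z_n([a_n,\infty)) \ge \rho$ on an event $A$ of positive probability; the two bounds then coincide on $A$, which gives the theorem. The upper bound needs only $\E_{{\bf e},\delta_x}(Z_n([a_n,\infty))) = m_n(x,{\bf e},[a_n,\infty))$ together with Markov's inequality: for $\epsilon>0$,
$$\P_{{\bf e},\delta_x}\bigl(Z_n([a_n,\infty)) \ge e^{(\rho+\epsilon)n}\bigr) \le e^{-(\rho+\epsilon)n}\, m_n(x,{\bf e},[a_n,\infty)),$$
which by the first part of Assumption \ref{MG} is at most $e^{-\epsilon n/2}$ for $n$ large, hence summable; Borel--Cantelli and letting $\epsilon\downarrow 0$ along a sequence give the claim.

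For the lower bound I would apply Lemma \ref{klem} with the first-stage data $k_i := ip$ and $B_i := [b_i,\infty)$, where $p$ and $(b_i)$ are furnished by Assumption \ref{MG}. The monotonicity Assumption \ref{monot}, propagated from one generation to $p$ generations by a coupling, shows that the infimum over $z\in[b_i,\infty)$ in the definition of $\mu_{k_i,k_{i+1}}(B_i,{\bf e},B_{i+1})[l,\infty)$ is attained at $z=b_i$; hence $\bar\mu_{k_i,k_{i+1}}(B_i,{\bf e},B_{i+1}) = m_p(b_i,T^{ip}{\bf e},[b_{i+1},\infty))$, so the assumed $\liminf_i m_p(b_i,T^{ip}{\bf e},[b_{i+1},\infty)) > 1$ is exactly the supercriticality hypothesis of the lemma. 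The same reduction bounds $\hat\mu_{k_i,k_{i+1}}(B_i,{\bf e},B_{i+1})$ by the normalized second moment of $Z_p$ started at $b_i$, and the uniform bound $\sup_{z,k}\E(N(z,T^k{\bf e})^2)<\infty$ forces $\E_{\delta_z}(Z_p(\XX)^2)$ to be bounded uniformly in $z$ and in the starting generation, so $\hat\mu_{k_i,k_{i+1}}(B_i,{\bf e},B_{i+1})$ is bounded by a constant depending only on $p$; with $\bar\mu \ge c > 1$ this makes $\sum_i \hat\mu_i / \prod_{j<i}\bar\mu_j$ geometrically convergent. Lemma \ref{klem} then produces an event $A$ of positive probability.

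Next I would fix $\epsilon>0$ and take $q=q(\epsilon)$, $\phi(n)$ and $(b_{j,n})$ from the last part of Assumption \ref{MG}, chosen so that the terminal second-stage set lies inside $[a_n,\infty)$; set $k_{0,n} := k_{\phi(n)}$, $k_{j+1,n} := k_{j,n}+q$ (lengthening the last step so that the construction reaches generation $n$), $B_{j,n} := [b_{j,n},\infty)$ and $\psi_n := \lfloor (n-\phi(n)p)/q\rfloor$. The conditions $k_{0,n}=k_{\phi_n}$ and $\phi_n\to\infty$ hold by construction, and the uniform-in-$n$ summability of $\sum_{j<\psi_n}\hat\mu_{k_{j,n},k_{j+1,n}}(B_{j,n},{\bf e},B_{j+1,n})/\prod_{i<j}\bar\mu_{k_{i,n},k_{i+1,n}}(B_{i,n},{\bf e},B_{i+1,n})$ follows as above from the uniform second-moment bound (grouping consecutive steps where needed so that the partial products exceed $1$). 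Lemma \ref{klem} then gives, on the \emph{same} event $A$, $\liminf_n Z_n([a_n,\infty))/P_n \ge \liminf_n Z_n(B_{\psi_n,n})/P_n > 0$; since every factor of $P_n$ exceeds $1$ and, by monotonicity, $\bar\mu_{k_{j,n},k_{j+1,n}}(B_{j,n},{\bf e},B_{j+1,n}) = m_q(b_{j,n},T^{k_{j,n}}{\bf e},[b_{j+1,n},\infty))$, the last part of Assumption \ref{MG} yields $\liminf_n \frac1n\log P_n \ge \rho-\epsilon$, whence $\liminf_n \frac1n\log Z_n([a_n,\infty)) \ge \rho-\epsilon$ on $A$. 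Because $A$ is the survival event of the first-stage branching process in varying environment, whose law depends only on $p$ and $(b_i)$ and not on $\epsilon$, one may intersect over $\epsilon=1/k$ to get $\liminf_n \frac1n\log Z_n([a_n,\infty)) \ge \rho$ on $A$; together with the a.s.\ upper bound this gives $\frac1n\log Z_n([a_n,\infty))\to\rho$ on $A$, and $\P_{{\bf e},\delta_x}(A)>0$.

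The main obstacle is the verification of the hypotheses of Lemma \ref{klem}: one must show that Assumption \ref{monot}, a single-generation statement, propagates through the $p$- and $q$-step transitions so that the infima defining $\bar\mu$ and $\hat\mu$ are attained at the left endpoints of the sets $B_i$, and one must control the normalized variances $\hat\mu$ uniformly from the second-moment assumption, including the regrouping of steps needed to keep the relevant partial products above $1$, so that both summability conditions hold. The remaining ingredients — Markov's inequality for the upper bound, feeding the curve of Assumption \ref{MG} into the lemma, and the passage $\epsilon\to0$ on the fixed event $A$ — are routine.
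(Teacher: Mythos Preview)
Your proposal is correct and follows essentially the same route as the paper: Markov plus Borel--Cantelli for the upper bound, and Lemma \ref{klem} with $k_i=ip$, $B_i=[b_i,\infty)$ in the first stage and $k_{j,n}=\phi_n p+jq$, $B_{j,n}=[b_{j,n},\infty)$, $\psi_n=\lfloor (n-\phi_n p)/q\rfloor$ in the second, using monotonicity to identify $\bar\mu$ with $m_p$ (resp.\ $m_q$) at the left endpoint and the uniform second-moment bound to control $\hat\mu$. You are in fact slightly more explicit than the paper about two points it passes over quickly: the propagation of the one-step monotonicity to $p$- and $q$-step transitions, and the uniform-in-$n$ summability in the second stage (your regrouping remark addresses a detail the paper simply asserts).
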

The upper bound of the convergence above is actually a.s. Letting the initial population go to infinity in this statement allows to get the convergence  a.s. by branching property. 
Getting the result a.s. on the survival event seems to require additional assumptions. \\
The uniform bound on the second moment assumption can be relaxed (see the proof), in particular the bound can depend on the environment to capture some branching models in
 random environment.
\begin{proof}
As for branching random walks, the upper bound comes directly from Markov inequality. For every $\eta >0$,
\Bea 
\P_{x,{\bf e}} ( Z_n([a_n,\infty)) \geq   \exp((\rho+\eta)n)) 
& \leq &\exp(-(\rho+\eta)n) m_n(x,{\bf e}, [a_n,\infty)), 
\Eea
so that  the first part of the Assumption \ref{MG}  ensures that the right hand side is summable. Then
Borel-Cantelli lemma yields the a.s. upper bound. 

The lower bound comes from the previous  Lemma with 
$$k_i=ip,\quad k_{n,j}=\phi_np+jq, \qquad B_j=[b_j,\infty), \quad B_{j,n}=[b_{j,n},\infty), \quad \psi_n=[(n-\phi_np)/q],$$
where $i=0,\ldots,\phi_n$, $j=0,\ldots, \psi_n$ and $[x]$ is the smallest integer larger or equal to $x$.
By Assumption \ref{monot} (monotonicity),
$$\mu_{k_j,k_{j+1}}(B_j,{\bf e}, B_{j+1})(.):=\P_{\delta_{b_j}, T^{k_j}{\bf e }}\big(Z_{k_{j+1}-k_j}([b_{j+1}, \infty)) = .\big)  $$
and the definition of $\mu_{k_{j,n},k_{j+1,n}}(B_{j,n},{\bf e}, B_{j+1,n})(.)$ is analogous. So 
$$\bar{\mu}_{k_j,k_{j+1}}(B_j,{\bf e}, B_{j+1})=m_{k_{j+1}-k_j}(b_j, T^{k_j}{\bf e },[b_{j+1}, \infty)) = m_{p}(b_{j}, T^{jp}{\bf e}, [b_{j+1}, \infty)). $$
and the analogous identity hold for $\bar{\mu}_{k_{j,n},k_{j+1,n}}(B_{j,n},{\bf e}, B_{j+1,n})$
By  Assumption \ref{MG}, we have for $\epsilon \in (0,\rho)$,
$$ \liminf_{j\rightarrow \infty} \bar{\mu}_{k_j,k_{j+1}}(B_j,{\bf e}, B_{j+1})>1, \ \ \liminf_{n\rightarrow \infty} \frac{1}{n} \log(\Pi_{j=0}^{\psi_n-1}\bar{\mu}_{k_{j,n},k_{j+1,n}}(B_{j,n},{\bf e}, B_{j+1,n}))\geq \rho-\epsilon>0.$$
Recalling that  $\sup\{\E(N(x, T^k{\bf e})^2) : x\in \XX, k \geq 0\}<\infty$ is assumed, we get 
$$\sum_{i\geq 0}  \frac{\hat{\mu}_{k_i,k_{i+1}}(B_i,{\bf e},B_{i+1})}{\Pi_{j=0}^{i-1}\bar{\mu}_{k_j,k_{j+1}}(B_i,{\bf e},B_{j+1})}<\infty; \quad 
\sup_n \sum_{i=0}^{\psi_n-1}  \frac{\hat{\mu}_{k_{i,n},k_{i+1,n}}(B_{i,n},{\bf e},B_{i+1,n})}{\Pi_{j=0}^{i-1}\bar{\mu}_{k_{j,n},k_{j+1,n}}(B_{j,n},{\bf e},B_{j+1,n})}<\infty.$$
Thus, we can apply  Lemma \ref{klem} and get
$$A\subset \left\{ \liminf_{n\rightarrow \infty}\frac{Z_{n}([a_n,\infty))}{\prod_{i=1}^{\psi_n} \bar{\mu}_{k_{i,n},k_{i+1,n}}(B_{i,n},{\bf e},B_{i+1,n}) } >0 \right\}
\subset \left\{\liminf_{n\rightarrow \infty} \frac{1}{n}\log Z_{n}([a_n,\infty)) \geq \rho-\epsilon \right\}$$
Noting that $A$ is fixed when $\epsilon\rightarrow 0$ and $\P(A)>0$ ends up the proof.
\end{proof}

As expected, we can now precise the asymptotic behavior of the extremal individuals.
 If  $a_n(x)$ satisfies the  assumptions of Theorem \ref{ldmb} with  some rate $\rho(x)$,
then, for every $x$ such that $\rho(x)>\log m$, 
$$\limsup_{n\rightarrow \infty} \frac{\max \{ X(u) : \vert u \vert=n \}}{a_n(x)} \leq 1 \qquad \P_{\delta_x, \bf e} \text{ a.s.}$$
and for every $x$ such that $\rho(x)<\log m$,
$$\liminf_{n\rightarrow \infty}\frac{\max \{ X(u) : \vert u \vert=n\}}{a_n(x)}  \geq 1 \qquad \P_{\delta_x, \bf e} \text{ a.s.}$$
  on some event whose probability is positive. \\
The proof is  standard.
 The first part comes directly from  Borel-Cantelli Lemma, recalling that
$$\P_{\delta_x, \bf e}(\max \{ X(u) : \vert u \vert=n \}\geq a_n(x))\leq \E_{\delta_x, \bf e}(Z_n([a_n(x),\infty)))=m(x,{\bf e}, [a_n(x),\infty))$$
decreases exponentially with rate $\rho-\rho(x)$.
The second part comes from the Theorem \ref{ldmb} which ensures that there are many particles beyond $a_n(x)$.

\subsection{Monotone Markov chain indexed by branching trees}

Let us specify in a simpler framework the results above, more precisely the  link between the local densities and the large deviations of the auxiliary
chain. We assume here that the reproduction law  does not depend on the trait of the individual, so that
\be
\label{ind}
N({\bf e}):=N(x,{\bf e}), \quad m({\bf e}):= m(x,{\bf e}), \quad 
 \qquad m_n({\bf e}):=m_n(x,{\bf e},\XX)=\prod_{i=0}^{n-1} m(T^{i}{\bf e}).
 \ee
As  above, we require the monotonocity of the trait distribution :
assume :
\begin{Ass}[Monotonicity of $P$] 
\label{mon}
For all $x\leq y$, ${\bf e} \in E$ and  $a\in \XX$, we have
$$P(x,{\bf e}, [a, \infty)) \leq P (y, {\bf e}, [a,\infty)).$$
\end{Ass}

We assume also that the large deviations of $Q_{i,n}$  beyond $a_n$  occur with rate $\alpha>0$ and that the beginning of the associated trajectory is supercritical, i.e.
\begin{Ass}[Large deviations of the auxiliary process $Q$] \label{AssLD} 
There exists $\alpha \geq 0$ such that
$$\lim_{n\rightarrow \infty} \frac{1}{n} \log Q_{0,n}(x,{\bf e}, [a_n,\infty)) =-\alpha$$
Moreover, we assume that there exist $p\geq 1$ and $b_i\in \XX$ such that
$$\liminf_{i\rightarrow \infty}  m_p(T^{ip}{\bf e})Q_{p}(b_{i}, T^{ip}{\bf e}, [b_{i+1}, \infty)) >1$$
and that for every $\epsilon>0$, there exist $q=q(\epsilon)$,   $\phi(n)\rightarrow \infty$ and $(b_{j,n} : j,n\geq 0)$ such that
$$ \liminf_{n\rightarrow \infty}\frac{1}{n} \sum_{ j < (n-\phi(n)p)/q} \log Q_{q}(b_{j,n},T^{i\phi(n)+jq}{\bf e}, [b_{j+1,n}, \infty))\geq -\alpha-\epsilon.$$
\end{Ass}

These assumptions are satisfied for the applications we have in mind.
For an example of   large deviations following Assumption \ref{AssLD}, a sufficient condition is  $P_{a_n}(Y_n \geq a_n+b_n)\sim \P_0(Y_n \geq b_n)$. The  trajectory associated to the large deviation event is then straight  and we can choose
$k_{i,n}=k_i$. It holds for random walks and more generally for random walks in random environment under general moment assumptions. 


\begin{Cor} \label{corLocal} Let ${\bf e} \in E$ and $x\in \XX$.
If (\ref{ind}),
$\sup\{\E(N(T^k{\bf e})^2) :  k \geq 0\}<\infty$   and Assumptions  \ref{mon} and \ref{AssLD} hold,  we have  
$$\P_{{\bf e}, \delta_x}\left(\frac{1}{n}\log \left(Z_n([a_n,\infty))/m_n({\bf e})\right)\stackrel{n\rightarrow\infty}{\longrightarrow} -\alpha\right)>0.$$ 
\end{Cor}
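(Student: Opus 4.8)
The plan is to reduce Corollary \ref{corLocal} to Theorem \ref{ldmb} by verifying that, in the present neutral setting \eqref{ind}, Assumption \ref{monot} follows from Assumption \ref{mon}, and Assumption \ref{MG} follows from Assumption \ref{AssLD} after multiplying through by the (trait-independent) mean $m_n({\bf e})$. Concretely, I would first set $A_n=[a_n,\infty)$ and observe that the many-to-one formula (Lemma \ref{manytoone}) gives $m_n(x,{\bf e},[a_n,\infty))=m_n({\bf e})\,Q_{0,n}(x,{\bf e},[a_n,\infty))$, so that the large deviations rate hypothesis $\tfrac1n\log Q_{0,n}(x,{\bf e},[a_n,\infty))\to-\alpha$ translates into $\tfrac1n\log m_n(x,{\bf e},[a_n,\infty))\to \rho$ with $\rho:=\log m-\alpha$, where $\log m:=\lim_n\tfrac1n\log m_n({\bf e})$ (which exists under \eqref{ind}; if it does not, one works with $\liminf$/$\limsup$ and states the result accordingly). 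This is the first bullet of Assumption \ref{MG}.

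Next I would check monotonicity. Since the reproduction law does not depend on the trait, $Z_1([a,\infty))$ under $\P_{\delta_x,{\bf e}}$ is a sum of $N({\bf e})$ i.i.d. indicators $\ind(Y_1^{(j)}\ge a)$ where each $Y_1^{(j)}$ has law $P(x,{\bf e},\cdot)$; by Assumption \ref{mon}, $x\le y$ implies $P(x,{\bf e},[a,\infty))\le P(y,{\bf e},[a,\infty))$, hence the vector of indicators under $x$ is stochastically dominated by that under $y$, and therefore so is the sum. This gives Assumption \ref{monot}. For the remaining two displays of Assumption \ref{MG}, I use $m_{p}(b_i,T^{ip}{\bf e},[b_{i+1},\infty))=m_p(T^{ip}{\bf e})\,Q_p(b_i,T^{ip}{\bf e},[b_{i+1},\infty))$ and likewise for the $b_{j,n}$ curve; the two liminf conditions in Assumption \ref{AssLD} then become exactly the supercriticality and the rate-$(\rho-\epsilon)$ conditions of Assumption \ref{MG}, once one notes $\rho-\epsilon=\log m-\alpha-\epsilon'$ after absorbing the asymptotic contribution of the $\log m(T^k{\bf e})$ terms into the sum (this is where a short Cesàro-type computation is needed, since $\tfrac1n\sum_{k}\log m(T^k{\bf e})\to\log m$).

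With all hypotheses of Theorem \ref{ldmb} in place — the second-moment bound $\sup\{\E(N(T^k{\bf e})^2):k\ge0\}<\infty$ is assumed directly — that theorem yields $\P_{{\bf e},\delta_x}\big(\tfrac1n\log Z_n([a_n,\infty))\to\rho\big)>0$, and dividing inside the logarithm by $m_n({\bf e})$ (whose log grows like $n\log m$) converts the rate $\rho$ into $\rho-\log m=-\alpha$, which is the claim. The main obstacle I anticipate is purely bookkeeping rather than conceptual: correctly matching the time-indexing of the two pieces of the curve (the $k_i=ip$ piece up to generation $\phi(n)p$ and the $k_{i,n}=\phi_np+jq$ piece afterwards) so that the Cesàro averages of the environment-dependent means $\log m(T^k{\bf e})$ line up with the $Q$-rates, and making sure that the $o(n)$ discrepancy between $m_n(x,{\bf e},[a_n,\infty))$ growth and $m_n({\bf e})Q_{0,n}$ growth does not leak into the exponential rate. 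If $\log m$ fails to exist, one either imposes its existence (true, e.g., under ergodicity of the environment by Kingman's theorem) or restates the conclusion with $\limsup$ on the upper bound and $\liminf$ on the lower bound.
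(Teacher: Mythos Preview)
Your plan is exactly what the paper has in mind: the paper gives no separate proof for this corollary, treating it as an immediate specialization of Theorem~\ref{ldmb} to the neutral setting~\eqref{ind}, and your reduction is precisely that.

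Two points to tighten. First, your monotonicity step assumes the offspring traits are i.i.d.\ with law $P(x,{\bf e},\cdot)$; in the paper's general model the joint law $P^{(k)}(x,{\bf e},\cdot)$ need not be a product, so Assumption~\ref{mon} (monotonicity of the marginal kernel $P$) does not formally imply Assumption~\ref{monot} (monotonicity of the whole law of $Z_1([a,\infty))$). This is harmless in the applications the paper targets (Kimmel's model, branching random walks), where a monotone coupling of the full offspring vector is available, but it is an extra hypothesis you are silently importing. Second, your route through a fixed $\rho=\log m-\alpha$ needs $\tfrac1n\log m_n({\bf e})$ to converge, which the stated hypotheses do not guarantee. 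You flag this yourself; a clean way to avoid it is to bypass Theorem~\ref{ldmb} and rerun its two halves directly for the ratio $Z_n([a_n,\infty))/m_n({\bf e})$. The Markov upper bound reads
\[
\P_{{\bf e},\delta_x}\big(Z_n([a_n,\infty))\ge m_n({\bf e})e^{(-\alpha+\eta)n}\big)\le e^{(\alpha-\eta)n}\,Q_{0,n}(x,{\bf e},[a_n,\infty)),
\]
which is summable with no hypothesis on $m_n({\bf e})$. For the lower bound via Lemma~\ref{klem}, in the neutral case the $m$-factors in $P_n$ telescope exactly to $m_{\phi_n p+\psi_n q}({\bf e})$, so $P_n/m_n({\bf e})$ is controlled by the product of the $Q$-factors alone, up to a bounded correction from the last $<q$ generations and an $e^{o(n)}$ correction from the first $\phi_n$ blocks when $\phi_n=o(n)$ (which is how the paper's applications choose it).
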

\noindent As expected, the large deviation of the auxiliary Markov chain quantifies the lost of growth $\alpha$ of the size of the population beyond $a_n$, $Z_n([a_n,\infty))$, compared to the whole growth of the population given by 
$m_n({\bf e})$. In the case of fixed environment, let us mention a related work on critical branching Markov chain \cite{ganmull}, where the recurrence property is investigated.

\subsection{Applications}

We first give some details on a motivating example for which straight and non straight curve for large deviations appear. We then give some first comments
on a new possible challenging questions.

\subsubsection{Kimmel's branching model}
We refer to \cite{ban} for a complete description of the model and the motivations.
The population of individuals is  a binary tree of cells and the trait is the number of parasites of the cell. The auxiliary Markov process $Y$ is then
a branching process in random environment.
Monotonicity (Assumption \ref{mon}) is a direct consequence of the branching property of $Y$. Tackling the local densities   and the trait of extremal individuals thanks to the previous Corollary  (only)
requires   to check Assumption \ref{AssLD}. \\

One of the motivating question in \cite{ban} is to count the number of infected cells in the subcritical case, which means that $Y$ is a.s. absorbed in finite time. Three regimes appear in the subcritical case \cite{GKV} and in particular in the weak subcritical case 
$$\P(Y_n>0) \sim cn^{-3/2}\gamma^n,$$
where $\gamma<\E_1(Y_1)$.
The mean number of infected cells is equal to $2^n\P(Y_n>0)$ and obtaining a.s. results was left open is this regime.
Corollary \ref{corLocal}
ensures that  if $2\gamma>0$, the number $N_n^*$  of infected cells in generation $n$ satisfies
$$\frac{1}{n}\log(N_n^*)\stackrel{n\rightarrow \infty}{\longrightarrow} \log(2\gamma) \qquad \text{a.s.}$$
on the event when the whole population of parasites survives. Indeed, this result is applied for $p$ large enough such that $2^p\P_1(Y_p>0)>1$, $a_n=[1,\infty]$,
$b_i=1, b_{j,n}=1$, $\phi(n)=o(n)$ and $q$ is chosen such that
$$\log \P_1(Y_q>0)\geq q\log(\gamma ) -\epsilon.$$

Second, when counting the number of cells infected less than the typical cell in the supercritical regime, the problem is now 
linked to the lower large deviation of branching processes in random environment $Y_n$, i.e. to 
$$\P(1 \leq Y_n \leq \exp(n\theta)), \qquad \text{where } \ \theta<\E(\log m(\EE)))$$
and the way this  large deviation event is realized. We refer to \cite{BB12} for the results. Here again Corollary
\ref{corLocal} allows to determine the a.s.  behavior of the number of cells whose  number of parasites is between $1$ and $\exp(\theta n)$. It is worth noting that for this question the associated trajectory is not straight and  $k_{i,n}$
depends on $n$.

\subsubsection{Comments on branching random walks and random environment}
We can recover here the classical result on the asymptotic behavior of 
$$\frac{1}{n}\log Z_n[an,\infty)$$
for a branching random walk with random increment $X$. It converges a.s. to 
$$\log(m)-\Lambda(a)$$
soon as $a\geq  \E(X)$ and $\log(m)>\Lambda(a)$,
where $\Lambda$ is the rate function associated to the random walk $S=\sum_{i=0}^{n-1} X_i$, see e.g. \cite{Rouault,Shi}.

One can extend this result to offsprings distribution in time varying environment and random walks in varying environment using the last Corollary and
 large deviations of random walks in varying environment. Here $b_i=aip$, $b_{j,n}=ajp+b_{\phi(n)}$, $\phi(n)=o(n)$.
We refer in particular to \cite{Zeit} for results on quenched and annealed  large deviations of random walk in random environment.\\
The uniform bound of the second moment of the reproduction in Theorem \ref{ldmb} can be relaxed and depend on the environment. Similarly, 
Assumption  \ref{MG}  can be extended to 
$$
\liminf_{i\rightarrow \infty}  m_{p}(b_{i}, T^{ip_i({\bf e})}{\bf e}, [b_{i+1}, \infty)) >1.
$$
Thus, using still Lemma \ref{klem}, one can get   an analogous result  in stationary random environment.

\subsubsection{Perspectives}
A main motivation  for this work is the control of local densities in cell division models for aging \cite{guyon, DelMar},  for damages \cite{evanssteinsaltz} or infection such as Kimmel's branching model already mentioned. An other motivation comes from spatial models in ecology with time and/or space inhomogeneity.
We aim at investigating further these questions and determine the behavior of extremal particles in these models, which seem to show  different large deviation's curves.


$\newline$

\textbf{Acknowledgement.} 
This work  was partially  funded by  Chair Modelisation Mathematique et Biodiversite VEOLIA-Ecole Polytechnique-MNHN-F.X., the professorial chair Jean Marjoulet, the project MANEGE `Mod\`eles
Al\'eatoires en \'Ecologie, G\'en\'etique et \'Evolution'
09-BLAN-0215 of ANR (French national research agency).

\end{document}